\documentclass[a4paper,12pt]{article}
\usepackage{latexsym,amsmath,amsthm,amssymb}
\usepackage{a4wide}
\usepackage{hyperref}
\usepackage{marginnote}
\usepackage{color}
\usepackage{cite}
\hypersetup{
pdftitle={Stability for $L^2$ weighted CKN inequality}   
pdfauthor={Van Hoang},
colorlinks = true,
linkcolor = magenta,
citecolor = blue,
}
\usepackage[sort&compress]{natbib}

\theoremstyle{plain}
\newtheorem{theorem}{Theorem}[section]

\newtheorem{lemma}[theorem]{Lemma}
\newtheorem{corollary}[theorem]{Corollary}

\theoremstyle{definition}

\theoremstyle{remark}

\allowdisplaybreaks

\renewcommand{\thefootnote}{\arabic{footnote}}

\def\R{\mathbb R}

\def\de{\delta}

\def\lam{\lambda}
\def\ep{\epsilon}
\def\na{\nabla}
\def\pa{\partial}
\def\lt{\left}
\def\rt{\right}

\def\i0i{\int_0^\infty}

\def\irn{\int_{\R^n}}

\allowdisplaybreaks

\numberwithin{equation}{section}
\usepackage{color}

\title{The weighted $L^2$-Caffarelli-Kohn-Nirenberg inequalities for the curl-free vector fields and second order derivatives: The sharp constants and stability estimates.}
\author{Anh Tuan Duong\footnote{Faculty of  Mathematics and Informatics, Hanoi University of Science and Technology, 1 Dai Co Viet, Bach Mai, Ha noi, Viet Nam.}\, and Van Hoang Nguyen\footnote{Department of Mathematics, FPT University, Ha Noi, Viet Nam.} 
}

\begin{document}
\maketitle


\renewcommand{\thefootnote}{}

\footnote{Email: \href{mailto: Anh Tuan Duong<tuan.duonganh@hust.edu.vn>}{tuan.duonganh@hust.edu.vn}; \href{mailto:Van Hoang Nguyen <hoangnv47@fe.edu.vn>}{hoangnv47@fe.edu.vn}.}

\footnote{2010 \emph{Mathematics Subject Classification\text}: 26D10, 46E35, 26D15}

\footnote{\emph{Key words and phrases\text}: Caffarelli-Kohn-Nirenberg inequality, sharp constants, extremal functions, stability version, spherical harmonic decomposition.}

\renewcommand{\thefootnote}{\arabic{footnote}}
\setcounter{footnote}{0}

\begin{abstract}
In this paper, we study the weighted $L^2$-Caffarelli-Kohn-Nirenberg inequalities for curl-free vector fields and second order derivatives. Firstly, we prove a family of the sharp weighted second order $L^2$-Caffarelli-Kohn-Nirenberg inequalities that complements the results in [{\it C. Cazacu, J. Flynn and N. Lam,
	Calc. Var. Partial Differential Equations 62 (2023), no. 4, Paper No. 118, 26 pp.}] and [{\it A. T. Duong and V. H. Nguyen, On the sharp second order Caffarelli-Kohn-Nirenberg
inequality. Ann. Fenn. Math., 50(1):275--286, 2025}]. Secondly, we establish a stability version of the sharp weighted $L^2$-Caffarelli-Kohn-Nirenberg inequalities for curl-free vector fields proved by Cazacu, Flynn and Lam. Finally, we prove a stability estimate for the sharp weighted second order $L^2$-Caffarelli-Kohn-Nirenberg inequalities established in this paper. Our approach is based on the spherical harmonic decomposition method, the one dimensional integral inequalities and their improvements.
\end{abstract}

\section{Introduction}
Let us start by recalling the first order interpolation inequalities introduced by Caffarelli, Kohn and Nirenberg in \cite{1stCKN} known as the Caffarelli-Kohn-Nirenberg (CKN for short) inequalities.

\noindent{\bf Theorem A. (CKN inequalities) }{\it  Let $n \geq 1$ and let $p, q, r, \alpha, \beta, \delta$ and $\sigma$ be real numbers satisfying
\[
p,q \geq 1,\qquad r >0,\qquad \delta \in [0,1],
\]
and
\[
\frac1p + \frac \alpha n > 0, \quad \frac 1q + \frac \beta n > 0,\quad  \frac 1r + \frac\gamma n > 0,
\]
where $ \gamma = \delta \sigma + (1-\delta)\beta$. Then there exists  $C > 0$ such that the inequality
\begin{equation}\label{eq:1stCKN}
\Big(\int_{\R^n} |u|^r |x|^{r\gamma} dx\Big)^{\frac1r} \leq C \Big(\int_{\R^n} |\nabla u|^p |x|^{\alpha p} dx\Big)^{\frac\delta p}\Big(\int_{\R^n} |u|^q |x|^{\beta q} dx\Big)^{\frac{1-\delta}q}
\end{equation}
holds for any function $u\in C_0^\infty(\R^n)$ if and only if the following relations occur
\[
\frac1r + \frac\gamma n = \delta\Big(\frac1p + \frac{\alpha -1}n\Big) +(1-\delta)\Big(\frac1q + \frac\beta n\Big),
\]
\[
\alpha -\sigma \geq 0 \qquad\text{ if }\qquad a >0,
\]
and 
\[
\alpha -\sigma \leq 1 \qquad\text{ if } a > 0\quad\text{ and } \quad \frac 1p + \frac{\alpha -1}n = \frac 1r +\frac\gamma n.
\]
}Furthermore, on any compact set in the space of parameters, the constant $C$ above is bounded. Interestingly, the CKN inequalities \eqref{eq:1stCKN} contain many well-known and important inequalities in analysis such as Sobolev inequalities, Gagliardo-Nirenberg inequalities, Hardy inequalities, Hardy-Sobolev inequalities, Nash's inequalities, Heisenberg-Pauli-Weyl uncertainty principle, etc.  as special cases. The CKN inequalities play the important roles and have many applications in analysis and theory of partial differential equations (see, e.g, \cite{CKNappl}). They have been extensively studied by many mathematicians, especially on the questions about the sharp constant and the classification of extremal functions for the CKN inequalities. In this research direction, we can list some celebrated results of Aubin and Talenti \cite{Aubin,Talenti}, of Del Pino and Dolbeault \cite{DD1,DD2}, and of Carlen and Loss \cite{CarlenLoss}  for the sharp Sobolev inequalities, the sharp Gagliardo-Nirenberg inequalities,  and the sharp Nash's inequalities, respectively. The interest reader may consult the papers \cite{NguyenPLMS,NguyenHC1,NguyenHC2,BrezisMironescu,BrezisJeanYung,Xia,Ruzhansky1,Ruzhansky2,Ruzhansky3,CatrinaWang,ChenLuZhang,Dong,DongLamLu,Flynn,FlynnLamLu,LamLu} for other results concerning  the CKN type inequalities. In \cite{Costa,CatrinaCosta}, the following important subfamily of the sharp $L^2-$CKN inequalities was proved
\begin{equation}\label{eq:L2CKN}
\int_{\R^n} \frac{|u|^2}{|x|^{2b}} dx \int_{\R^n} \frac{|\nabla u|^2}{|x|^{2a}} dx \geq C(n,a,b)^2\lt(\int_{\R^n} \frac{|u|^2}{|x|^{a+b+1}} dx\rt)^2,\quad u \in C^\infty_0(\R^n\setminus\{0\}).
\end{equation}
The sharp constant $C(n,a,b)$ and the extremal functions in \eqref{eq:L2CKN} are explicitly computed and depend on the parameter regions of $a, b$ as follows
\begin{equation*}
\begin{cases}
\mathcal A_1 :=\{(a,b)\, |\, a+1 > b, a\leq (n-2)/2\}\\
\mathcal A_2 := \{(a,b)\, |\, a+1 < b, a\geq (n-2)/2\}\\
\mathcal A := \mathcal A_1 \cup \mathcal A_2\\
\mathcal B_1 := \{(a,b)\, |\, a+1 <b, a\leq (n-2)/2\}\\
\mathcal B_2 := \{(a,b)\, |\, a+1 > b, a\geq (n-2)/2\}\\
\mathcal B = \mathcal B_1 \cup \mathcal B_2.
\end{cases}
\end{equation*}
More precisely, it was shown in \cite{CatrinaCosta} that

$\bullet$ If $(a,b) \in \mathcal A$, then the sharp constant $C(n,a,b) = |n -(a+b +1)|/2$ and the extremal functions are given by 
\[
u(x) = c \exp\Big(-\lambda \frac{|x|^{1+a -b}}{1+a-b}\Big),
\]
where $c, \lambda $ are constants with $\lambda (1+a -b) > 0$.

$\bullet$ If $(a,b) \in \mathcal B$, then the sharp constant  $C(n,a,b) = |n -(3a -b +3)|/2$ and the extremal functions are given by 
\[
u(x) = c |x|^{2(a+1) -n}\exp\Big(-\lambda \frac{|x|^{1+a -b}}{1+a-b}\Big)
\]
where $c, \lambda $ are constants with $\lambda (1+a -b) > 0$.

 In the case $1+a = b$, the CKN inequality \eqref{eq:L2CKN} becomes the weighted Hardy inequality with the sharp constant given by $C(n,a,a+1) = |n -2a -2|/2$. In this case, the sharp constant $C(n,a,a+1)$ is not attained by non-zero functions. The inequality \eqref{eq:L2CKN} was first proved by Costa in \cite{Costa} for a particular range of parameters by using the \emph{\text expanding-the-square} method, and then was established by Catrina and Costa in \cite{CatrinaCosta} for full range of parameters by using the spherical harmonic decomposition method and the Kelvin transformation. A simple and direct proof of \eqref{eq:L2CKN} was given in \cite{CaFlyLa}. Moreover, in \cite{CaFlyLa}, the authors obtained a stronger version of \eqref{eq:L2CKN} in which the full gradient $\nabla u$ is replaced by the radial derivative $\nabla u\cdot \frac x{|x|}$.

It is remarkable that in the case $a =0$ and $b =-1$, the CKN inequality \eqref{eq:L2CKN} is exactly the famous Heisenberg-Pauli-Weyl uncertainty principle (see \cite{Heisenberg,Weyl}), which asserts that 
\begin{equation}\label{eq:HPWup}
\int_{\R^n} |\nabla u|^2 dx \int_{\R^n} |u|^2 |x|^2 dx \geq \frac{n^2}4 \lt(\int_{\R^n} |u|^2 dx\rt)^2,\quad u \in C_0^\infty(\R^n).
\end{equation}
The constant $n^2/4$ in \eqref{eq:HPWup} is sharp and is attained by Gaussian functions $u(x) = c e^{-\lambda |x|^2}$ with $c \in \R$ and $\gamma > 0$. Motivated by questions and applications in hydrodynamics and harmonic analysis, Maz'ya in \cite[Section $3.9$]{Mazya} raised the question on the sharp constant in \eqref{eq:HPWup} when the scalar function $u$ is replaced by a divergence-free vector field $U$, here a vector field $U = (U_1,U_2,\ldots,U_n) : \R^n \to \R^n$ is called divergence-free if 
\[
\text{\rm div} U = \partial_1 U_1 + \partial_2 U_2 + \cdots+ \partial_n U_n = 0.
\]
This question was solved by Cazacu, Flynn and Lam (see \cite{CFL22}) in two dimensional case, i.e, $n =2$. More precisely, in \cite{CFL22} the authors established the following inequality
\begin{equation}\label{eq:2ndHUP}
\int_{\R^n} |\Delta u|^2 dx \int_{\R^n} |\nabla u|^2 |x|^2 dx \geq \frac{(n+2)^2}4 \lt(\int_{\R^n} |\nabla u|^2 dx\rt)^2,\quad u\in C_0^\infty(\R^n).
\end{equation}
The constant $(n+2)^2/4$ is sharp and is achieved by Gaussian functions $u(x) = c e^{-\lambda |x|^2}$ where $c \in \R$ and $\lambda >0$. A vector field $U$ is curl-free if $U =\nabla u$ for some scalar function $u: \R^n \to \R$. Therefore, the inequality \eqref{eq:2ndHUP} is equivalent to following inequality
\begin{equation}\label{eq:curlfreeHUP}
\int_{\R^n} |\nabla U|^2 dx \int_{\R^n} |U|^2 |x|^2 dx \geq \frac{(n+2)^2}4 \lt(\int_{\R^n} |U|^2 dx\rt)^2
\end{equation}
for any curl-free vector field $U\in (C_0^\infty(\R^n))^n$. The constant $(n+2)^2/4$  is again the best possible and it is attained by the curl-free vector fields of the form $U(x) = c x e^{-\lambda |x|^2}$ with $c\in \R$ and $\lambda >0$. The inequality \eqref{eq:curlfreeHUP} provides an uncertainty principle of Heisenberg-Pauli-Weyl type for curl-free vector fields. By an isometrically isomorphic between divergence-free vector fields and curl-free vector fields in $\R^2$ (in fact, if $U$ is a divergence-free vector field in $\R^2$ then $U(x) = (-\partial_{x_2} u, \partial_{x_1} u)$ for some scalar function $u$), the inequality \eqref{eq:curlfreeHUP} is equivalent to the following inequality in $\R^2$
\begin{equation}\label{eq:2dimcase}
\int_{\R^2} |\nabla U|^2 dx \int_{\R^2} |U|^2 |x|^2 dx \geq 4 \lt(\int_{\R^2} |U|^2 dx\rt)^2
\end{equation}
for any divergence-free vector fields $U\in (C^\infty_0(\R^2))^2$. The constant $4$ is sharp and is attained by the vector fields of the form $U(x) =c(-x_2,x_1) e^{-\lambda |x|^2}$, with $c\in \R$ and $\lambda >0$. The inequality \eqref{eq:2dimcase} answers affirmatively the question of Maz'ya in the case $n=2$. Recently, this question was completely solved by Hamamoto \cite{HamaMaz} (see also \cite{Hamasimple} for a simpler proof). In fact, by using the poloidal-toroidal decomposition for divergence-free vector fields and an one-dimensional variational problem, he proved that
\begin{equation}\label{eq:divergencefreeHUP}
\int_{\R^n} |\nabla U|^2 dx \int_{\R^n} |U|^2 |x|^2 dx \geq \frac14(\sqrt{n^2-4(n-3)} +2)^2 \lt(\int_{\R^n} |U|^2 dx\rt)^2
\end{equation}
for any divergence-free vector field $U\in (C_0^\infty(\R^n))^n$. The constant $(\sqrt{n^2-4(n-3)} +2)^2/4$ is sharp in \eqref{eq:divergencefreeHUP} and is achieved by some divergence-free vector fields.

In \cite{CFL23}, Cazacu, Flynn and Lam extended the inequality \eqref{eq:curlfreeHUP} to the weighted case and proved a family of sharp CKN inequalities for curl-free vector fields. In order to state their results, let us define $X_{a,b}(\R^n)$, with $a,b\in \R$, to be the set of vector fields $U\in C^\infty(\R^n\setminus \{0\})^n$ such that
\[
\int_{\R^n} \frac{|\nabla U|^2}{|x|^{2a}} dx < \infty,\quad \int_{\R^n} \frac{|U|^2}{|x|^{2b}}dx < \infty,\quad \int_{\R^n} \frac{|U|^2}{|x|^{a+b +1}} dx < \infty,
\]
\[
\lim_{|x|\to 0,\infty} |x|^{-1 + \frac n2 - a} U(x) = 0
\]
and
\[
\lim_{|x|\to 0, \infty} |x|^{-b + \frac n2} U(x) = 0.
\]
Let $n\geq 1$ and $a, b$ be real numbers such that $(n-2a)^2 > 4(n+1)$, it was proved by Cazacu, Flynn and Lam in \cite{CFL23} that for any curl-free vector field $U \in X_{a,b}(\R^n)$,
\begin{equation}\label{eq:CFLHUP}
\irn \frac{|\nabla U|^2}{|x|^{2a}} dx \irn \frac{|U|^2}{|x|^{2b}} dx \geq C_1(n,a,b)^2 \Big(\irn \frac{|U|^2}{|x|^{a+b +1}} dx\Big)^2.
\end{equation}
The sharp constant $C_1(n,a,b)$ and extremal vector fields $U$ are fully described and depend on the parameters $a$ and $b$. More precisely, they proved for $n\geq 1$ and $a,b\in \R$,  $(n-2a)^2 \geq 4(n+1)$ that  if $a -b + 1 > 0$ then 
\[
C_1(n,a,b) = \frac{a-b +1 + \sqrt{(n-2a -2)^2 + 4(n-1)}}2,
\]
and it is achieved by the curl-free vector fields
\[
U(x) = c |x|^{-\frac{n -2a - \sqrt{(n-2a -2)^2 + 4(n-1)}}2} e^{-\frac{\lambda}{a-b+1} |x|^{a-b +1}} x,\quad c\in \R, \, \lambda >0.
\]
In the case $a-b +1 < 0$, the sharp constant $C_1(n,a,b)$ is given by
\[
C_1(n,a,b) = -\frac{a-b +1 - \sqrt{(n-2a -2)^2 + 4(n-1)}}2.
\]
and is attained by the curl-free vectors fields
\[
U(x) = c |x|^{-\frac{n -2a + \sqrt{(n-2a -2)^2 + 4(n-1)}}2} e^{\frac{\lambda}{a-b+1} |x|^{a-b +1}} x,\quad c \in \R,\, \lambda >0.
\]
The inequality \eqref{eq:CFLHUP} becomes the sharp weighted Hardy inequality for curl-free vector fields of Hamamoto and Takahashi \cite{HamaTaka1} by letting $b\to a+1$ (see also \cite{HamaTaka2} for another proof). It also contains the weighted Heisenberg-Pauli-Weyl uncertainty principle for curl-free vector fields if $b = -a -1$, $a > -1$, and the weighted Hydrogen uncertainty principle if $b = -a$, $a > -\frac 12$. In particular, when $a =0$ and $n\geq 5$,  the Heisenberg-Pauli-Weyl uncertainty principle and the Hydrogen uncertainty principle for curl-free vector fields were proved in \cite{CFL22}.

Let $u$ be the scalar potential of $U$, i.e, $U = \nabla u$. Then,  the inequality \eqref{eq:CFLHUP} is equivalent to the following inequality
\begin{equation*}
\irn \frac{|\nabla^2u|^2}{|x|^{2a}} dx \irn \frac{|\nabla u|^2}{|x|^{2b}} dx \geq C_1(n,a,b)^2 \Big(\irn \frac{|\nabla u|^2}{|x|^{a+b +1}} dx\Big)^2
\end{equation*}
for any function $u$ with $\nabla u \in X_{a,b}(\R^n)$. Furthermore, the equality holds if
\[
\nabla u(x) = c |x|^{-\frac{n -2a - \sqrt{(n-2a -2)^2 + 4(n-1)}}2} e^{-\frac{\lambda}{a-b+1} |x|^{a-b +1}} x,\quad c\in \R,\, \lambda >0
\]
in the case $a -b + 1 >0$, and if
\[
\nabla u(x) = c |x|^{-\frac{n -2a +\sqrt{(n-2a -2)^2 + 4(n-1)}}2} e^{\frac{\lambda}{a-b+1} |x|^{a-b +1}} x,\quad c\in \R,\, \lambda > 0
\]
in the case $a - b + 1 < 0$.

As mentioned in \cite{CFL23}, it is not true in general that for $U = \nabla u$
\[
\int_{\R^n} \frac{|\nabla U|^2}{|x|^{2a}} dx = \int_{\R^n} \frac{|\Delta u|^2}{|x|^{2a}} dx.
\]
Hence, it is worth-while to prove a version of \eqref{eq:CFLHUP} with $\int_{\R^n} \frac{|\Delta u|^2}{|x|^{2a}} dx$ instead of $\int_{\R^n} \frac{|\nabla U|^2}{|x|^{2a}} dx$. Hence, one proposed  to prove the following weighted $L_2$ CKN inequality for second order derivatives
\begin{equation}\label{eq:2ndCKN}
\irn \frac{|\Delta u|^2}{|x|^{2a}} dx \irn \frac{|\nabla u|^2}{|x|^{2b}} dx \geq C_2(n,a,b)^2 \Big(\irn \frac{|\nabla u|^2}{|x|^{a+b +1}} dx\Big)^2.
\end{equation}
This inequality has been only proved in  \cite{DNAFM} for the special case of parameters $a,b$:  $b = -a -1$ if $1 + a >0$. The sharp constant  is given by 
\[
C_2(n,a,-a-1) = \frac{n +4a +2}2,
\]
which is attained by the functions of form $u(x) = c e^{-\lambda |x|^{2(1+a)}}$ with $c\in \R$ and $\lambda >0$. In fact, in \cite{DNAFM}, a stronger inequality was established in which $\irn \frac{|\nabla u|^2}{|x|^{2b}} dx$ was replaced by $\irn \frac{|\nabla u \cdot \frac x{|x|}|^2}{|x|^{2b}} dx$. This result was reproved in \cite{CFL23} by using another method. Moreover, a family of non-radial extremals was found in \cite{CFL23}.

Our first goal of this paper is to prove \eqref{eq:2ndCKN} for more general parameters $a$ and $b$. To do this, let us introduce the space $Y_{a,b}(\R^n)$,  which is the set of all functions in $C^\infty(\R^n\setminus\{0\})$ such that
\[
\irn \frac{|\Delta u|^2}{|x|^{2a}} dx <\infty, \quad \irn \frac{|\nabla u|^2}{|x|^{2b}} dx < \infty,\quad \irn \frac{|\nabla u|^2}{|x|^{a+b+1}} dx < \infty,
\]
\[
\lim_{|x|\to 0,\infty} |x|^{-1-a +\frac n2} \nabla u(x) = 0
\]
and
\[
\lim_{|x|\to 0,\infty} |x|^{-b +\frac n2} \nabla u(x) = 0.
\]

Our first main result of this paper reads as follows.

\begin{theorem}\label{MainII}
Let $n\geq 2$ and $a, b$ be real numbers such that $1-b+a \not=0$, $n-2a -4\not=0$, and $(n-2a -2)^2 \geq 8(1+a)(1+2a)$. Then, for any function $u\in Y_{a,b}(\R^n)$, we have
\begin{equation}\label{eq:2ndCKNnew}
\irn \frac{|\Delta u|^2}{|x|^{2a}} dx \irn \frac{|\nabla u|^2}{|x|^{2b}} dx \geq C_2(n,a,b)^2 \Big(\irn \frac{|\nabla u|^2}{|x|^{a+b +1}} dx\Big)^2
\end{equation}
where
\[
C_2(n,a,b) = \frac{|a-b +1| + |n+2a|}2.
\]
The constant $C_2(n,a,b)$ is sharp and is achieved by $u = c \varphi(\lambda x)$ with $c\in \R$ and $\lambda >0$, where the function $\varphi$ is defined by
\begin{equation}\label{eq:extremavphi}
\varphi(x) =
\begin{cases}
\int_{|x|}^\infty r^{ -\frac{ n - 2a -2 - |n+2a|}2} e^{-\frac{r^{a-b+1}}{a-b+1}} dr&\mbox{if $n > 2a +4$ and $1-b+a >0$,}\\
\int_0^{|x|} r^{ -\frac{ n - 2a -2 - |n+2a|}2} e^{-\frac{r^{a-b+1}}{a-b+1}} dr&\mbox{if $n < 2a +4$ and $1-b+a >0$,}\\
\int_0^{|x|} r^{ -\frac{ n - 2a -2 - |n+2a|}2} e^{\frac{r^{a-b+1}}{a-b+1}} dr&\mbox{if $n > 2a +4$ and $1-b+a <0$,}\\
\int_{|x|}^\infty r^{ -\frac{ n - 2a -2 - |n+2a|}2} e^{\frac{r^{a-b+1}}{a-b+1}} dr&\mbox{if $n< 2a +4$ and $1-b+a <0$}.
\end{cases}
\end{equation}
\end{theorem}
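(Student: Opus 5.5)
Our plan is to use the spherical harmonic decomposition $u(x)=\sum_{k\ge0}\sum_{j} u_{k,j}(r)\phi_{k,j}(\sigma)$, $r=|x|$, where $-\Delta_{S^{n-1}}\phi_{k,j}=c_k\phi_{k,j}$ with $c_k=k(k+n-2)$. Each term of \eqref{eq:2ndCKNnew} splits into a sum over $(k,j)$ of one-dimensional integrals:
\[
\irn \frac{|\Delta u|^2}{|x|^{2a}}dx=\sum_{k,j}\i0i \Big(u_{k,j}''+\tfrac{n-1}r u_{k,j}'-\tfrac{c_k}{r^2}u_{k,j}\Big)^2 r^{n-1-2a}dr ,
\]
\[
\irn \frac{|\nabla u|^2}{|x|^{2b}}dx=\sum_{k,j}\i0i \Big(|u_{k,j}'|^2+\tfrac{c_k}{r^2}u_{k,j}^2\Big)r^{n-1-2b}dr ,
\]
and similarly for the weight $|x|^{-(a+b+1)}$. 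The aim is to prove, for each mode $k$ and each $v=u_{k,j}$, the \emph{linear} estimate
\[
\Big(\i0i (\Delta_k v)^2 r^{n-1-2a}dr\Big)^{\frac12}\Big(\i0i (|v'|^2+\tfrac{c_k}{r^2}v^2)r^{n-1-2b}dr\Big)^{\frac12}\ge C_2\i0i (|v'|^2+\tfrac{c_k}{r^2}v^2)r^{n-2-a-b}dr .
\]
Summing these with Cauchy--Schwarz over $(k,j)$ then gives \eqref{eq:2ndCKNnew}.

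\textbf{Radial mode ($k=0$).} Set $w=v'$, so that $\Delta_0 v=w'+\frac{n-1}r w = r^{1-n}(r^{n-1}w)'$. I would use the expanding-the-square / integration-by-parts trick twice. First,
\[
\i0i (\Delta_0 v)^2 r^{n-1-2a}\,dr\ \ge\ \Big(\frac{n+2a}{2}\Big)^2\i0i w^2 r^{n-3-2a}\,dr+\dots ,
\]
which is the weighted Hardy--Rellich type step; it needs $n-2a-4\neq0$ and is where the factor $|n+2a|$ arises (after writing $w=r^{\gamma}g$ with $\gamma=-(n-2a-2-|n+2a|)/2$, which matches the profile in \eqref{eq:extremavphi}). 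Second, combining $\i0i (\Delta_0 v)\, w\, r^{\cdots}$ with $-\i0i w'w\,r^{n-1-a-b}dr=\frac{n-2-a-b}{2}\i0i w^2r^{n-2-a-b}dr$ produces the $|a-b+1|$ part. Concretely, I would write
\[
\Delta_0 v + \frac{|n+2a|}{2}\frac{w}{r}\ \text{(sign-adjusted)} = r^{\gamma}\cdot(\text{first-order in } g),
\]
and apply Cauchy--Schwarz to $\i0i (r^{\gamma}g)'\,r^{\gamma}g\, r^{n-2-a-b}\cdots$. This yields exactly $C_2=\frac{|a-b+1|+|n+2a|}{2}$. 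Equality forces $g'=\mp\lambda r^{a-b}g$, i.e. $g=e^{\mp r^{a-b+1}/(a-b+1)}$, and hence $v'=r^\gamma e^{\cdots}$. Integrating this from $0$ or from $\infty$, according to the signs of $n-2a-4$ and $1-b+a$ (which govern integrability and the decay conditions in $Y_{a,b}$), gives the four cases of $\varphi$. The boundary terms vanish by the limit conditions defining $Y_{a,b}$.

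\textbf{Nonradial modes ($k\ge1$).} Here
\[
\i0i (\Delta_k v)^2 r^{n-1-2a}dr = \i0i \Big(v''^2 + A_k\frac{v'^2}{r^2}+B_k\frac{v^2}{r^4}\Big)r^{n-1-2a}dr
\]
after integrating by parts, with explicit $A_k,B_k$ that are quadratic in $c_k$. I would show that the $k$-mode ratio is at least the $k=0$ ratio. The plan is to bound $\i0i (\Delta_k v)^2 r^{n-1-2a}dr$ from below by $\i0i (\Delta_0$-type$)^2$ applied to the gradient pair $(v', v/r)$, and then treat the extra $c_k$ terms as nonnegative. The condition $(n-2a-2)^2\ge 8(1+a)(1+2a)$ should be exactly what makes the cross coefficient nonnegative for $k=1$ (the worst case), with monotonicity in $c_k$ handling $k\ge2$.

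\textbf{Main obstacle.} I expect the main difficulty to be this comparison for $k\ge1$: the gradient contributes two quantities, $v'$ and $\sqrt{c_k}\,v/r$, and one must verify that a one-dimensional inequality with constant $C_2$ holds simultaneously for both, uniformly in $k$. My first attempt would be to apply the radial argument separately to $v'$ and to $v/r$, using a weighted Hardy inequality $\i0i v'^2r^{s}dr\ge\frac{(s-1)^2}{4}\i0i v^2r^{s-2}dr$ to absorb the cross terms. I would then check the resulting quadratic polynomial in $c_k$ against the hypothesis. Sharpness follows by inserting $\varphi(\lambda x)$, which attains equality in the radial step.
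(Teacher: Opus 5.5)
Your route matches the paper's: spherical harmonic decomposition, a linear per-mode bound with constant $C_2$ summed by Cauchy--Schwarz, and the radial mode handled by factorization plus H\"older. The latter is Lemma~\ref{modelineq} with $f=v'$, $A=n-2a$, $B=n-2b$, $C=(n-1)(1+2a)$, using $(n-2a-2)^2+4(n-1)(1+2a)=(n+2a)^2$. For $k\ge1$ you also have the right idea: run the same one-dimensional argument separately on $v'$ and on $v$, with Hardy absorbing the rest. Since you leave the $k\ge1$ bookkeeping open, here is the split that works. Write $\Delta_kv=v''+\frac{n-1}rv'-\frac{c_k}{r^2}v$; then
\begin{multline*}
\int_0^\infty(\Delta_k v)^2r^{n-2a-1}dr=\Big[\int_0^\infty (v'')^2r^{n-2a-1}dr+(n-1)(1+2a)\int_0^\infty (v')^2r^{n-2a-3}dr\Big]\\
+c_k\Big[2\int_0^\infty (v')^2r^{n-2a-3}dr+\big(c_k+2(1+a)(n-2a-4)\big)\int_0^\infty v^2r^{n-2a-5}dr\Big].
\end{multline*}
Pair the first bracket with $\int_0^\infty (v')^2r^{n-2b-1}dr$ and the second with $\int_0^\infty v^2r^{n-2b-3}dr$, via $\sqrt{(X_1+c_kX_2)(Y_1+c_kY_2)}\ge\sqrt{X_1Y_1}+c_k\sqrt{X_2Y_2}$. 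Turn one of the two copies of $\int_0^\infty (v')^2r^{n-2a-3}dr$ into $\frac{(n-2a-4)^2}{4}\int_0^\infty v^2r^{n-2a-5}dr$ by Hardy. Then apply the lemma to $f=v$ with $A=n-2a-2$, $B=n-2b-2$, $C_k=\frac{(n-2a-4)^2}{4}+c_k+2(1+a)(n-2a-4)$. The hypothesis does not enter through a ``cross coefficient''. It enters through the identity $(n-2a-4)^2+4C_1-(n+2a)^2=(n-2a-2)^2-8(1+a)(1+2a)$, whose nonnegativity makes the constant for $v$ at least $C_2$; larger $c_k$ only helps.

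One step fails as written: a single exponent $\gamma=-(n-2a-2-|n+2a|)/2$ cannot serve both signs of $a-b+1$. The identity $\int_0^\infty(\Delta_0v)^2r^{n-2a-1}dr=\int_0^\infty(w'+\frac{\mu}{r}w)^2r^{n-2a-1}dr$ holds for both roots $\mu_\pm=\frac{n-2a-2\pm|n+2a|}{2}$ of $\mu^2-(n-2a-2)\mu=(n-1)(1+2a)$; your profile corresponds to $\mu_-$. Integrating $\int_0^\infty w^2r^{n-a-b-2}dr$ by parts and applying H\"older gives the constant $\frac{|(a-b+1)+|n+2a||}{2}$ with $\mu_-$, but $\frac{|(a-b+1)-|n+2a||}{2}$ with $\mu_+$. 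So for $a-b+1<0$ your choice only yields $\frac{||n+2a|-|a-b+1||}{2}<C_2$, and you must use $\mu_+$ there. The paper does this through the inversion $r\mapsto 1/r$ in part (ii) of Lemma~\ref{modelineq}. Consequently, for $1-b+a<0$ the equality case is $v'(r)=r^{-\frac{n-2a-2+|n+2a|}{2}}e^{\frac{r^{a-b+1}}{a-b+1}}$, up to dilations and constant multiples. This is not the profile with $-|n+2a|$ printed in \eqref{eq:extremavphi} for those two cases, so do not try to match that formula. For example, take $n=5$, $a=0$, $b=2$: the printed $\varphi$ has $\varphi'(r)=re^{-1/r}$ and $\int_{\R^n}|\nabla\varphi|^2|x|^{-2b}dx=\infty$, whereas $\varphi'(r)=r^{-4}e^{-1/r}$ attains equality with $C_2=3$.
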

A direct consequence of Theorem \ref{MainII} is the following sharp weighted Hydrogen uncertainty principle for the second order derivative in the case $b=-a$. 
\begin{corollary}
	Let $n\geq 2$, $a > -\frac 12$ and $(n-2a -2)^2 \geq 8(1+a)(1+2a)$. Then for any function $u\in Y_{a,-a}(\R^n)$, we have
\[
\irn \frac{|\Delta u|^2}{|x|^{2a}} dx \irn |\nabla u|^2 |x|^{2a} dx \geq  { \frac{(n+4a+1)^2}4} \Big(\irn \frac{|\nabla u|^2}{|x|} dx\Big)^2.
\]
\end{corollary}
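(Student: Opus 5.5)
The corollary should follow from Theorem \ref{MainII} by specializing to $b=-a$. The work is to check that every hypothesis of the theorem holds under the assumptions of the corollary, and then to evaluate the constant and the weight exponent.

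\textbf{Hypotheses.} With $b=-a$ we have $1-b+a = 2a+1$, which is strictly positive because $a>-\frac12$; in particular $1-b+a\neq 0$. The assumptions $n\ge 2$ and $(n-2a-2)^2\ge 8(1+a)(1+2a)$ are copied directly from the corollary. The only hypothesis of Theorem \ref{MainII} not listed in the corollary is $n-2a-4\neq 0$, and I expect this to be the one step that needs an argument. I will show it is forced by the other assumptions. Suppose $n=2a+4$. The main condition then becomes $4\ge 8(1+a)(1+2a)$, that is,
\[
4a^2+6a+1\le 0 ,
\]
so $a\in\bigl[\tfrac{-3-\sqrt5}{4},\tfrac{-3+\sqrt5}{4}\bigr]$. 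Intersecting with $a>-\frac12$ gives $a\in\bigl(-\tfrac12,\tfrac{-3+\sqrt5}{4}\bigr]$. Hence $n=2a+4\in\bigl(3,\tfrac{5+\sqrt5}{2}\bigr]\subset(3,3.62]$, which contains no integer. So $n\neq 2a+4$ automatically, and Theorem \ref{MainII} applies to every $u\in Y_{a,-a}(\R^n)$.

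\textbf{Constant and weights.} Next I evaluate the quantities in \eqref{eq:2ndCKNnew}. Since $a>-\frac12$ and $n\ge 2$, we have $n+2a>n-1>0$, so $|n+2a|=n+2a$. Also $|a-b+1|=2a+1$. Therefore
\[
C_2(n,a,-a)=\frac{(2a+1)+(n+2a)}{2}=\frac{n+4a+1}{2}.
\]
The weights are $|x|^{-2b}=|x|^{2a}$ and $|x|^{a+b+1}=|x|$. Substituting these into \eqref{eq:2ndCKNnew} gives exactly the stated inequality with constant $\frac{(n+4a+1)^2}{4}$. Sharpness and extremals are inherited from Theorem \ref{MainII}, in the first or second line of \eqref{eq:extremavphi} according to the sign of $n-2a-4$.
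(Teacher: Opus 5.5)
Your proposal is correct and follows the paper's argument exactly: the corollary is obtained by setting $b=-a$ in Theorem \ref{MainII}, where $a>-\frac12$ gives $1-b+a=2a+1>0$ and $|n+2a|=n+2a$, hence $C_2(n,a,-a)=\frac{n+4a+1}{2}$. Your extra check that $n=2a+4$ would force $n\in\bigl(3,\tfrac{5+\sqrt5}{2}\bigr]$, which contains no integer, is a useful detail the paper leaves implicit, since the corollary omits the hypothesis $n-2a-4\neq 0$ required by Theorem \ref{MainII}.
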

In particular, when  $a=0$, we recover the second order Hydrogen uncertainty principle in \cite{CFL22}.

In the paper \cite{BL1985}, Br\'ezis and Lieb posed the question on the stability estimate for the sharp Sobolev inequality by asking that how close the function is to the set of extremals when the equality nearly occurs. This question can be seen as a quantitative version of the celebrated concentration-compactness principle of Lions for the Sobolev inequality \cite{LionsI}, and was affirmatively answered by Bianchi and Egnell in \cite{BiEg91}. In fact, by using a compact argument based on the concentration-compactness principle and a local stability estimate of Sobolev inequality via a spectral problem, Bianchi and Egnell proved that for $n\geq 3$, there exists a positive constant $C_{BE} >0$ such that
\begin{equation}\label{eq:BEstab}
\int_{\R^n} |\nabla u|^2 dx - S_n^2 \lt(\int_{\R^n} |u|^{\frac{2n}{n-2}} dx\rt)^{\frac{n-2}n} \geq C_{BE} \inf_{v \in \mathcal M} \int_{\R^n} |\nabla u-\nabla v|^2 dx,\quad u\in C_0^\infty(\R^n),
\end{equation}
where $S_n$ and $\mathcal M$ are the sharp constant and the set of extremals in the Sobolev inequality, respectively (see \cite{Aubin,Talenti}). Since then,  the stability estimates for functional inequalities in analysis and geometry have attracted a lot of attention by many mathematicians and many stability results for functional inequalities were established, for example  \cite{NguyenAiM,NguyenGNS,DNCVPDE,BDS26,BDNS25,BDS24,BDS24bis,DT16,DT13,DJ14,BFR24,FZ22,FZ22bis,FN19,Neu,FJ17,FJ15,FMP13,FiCar,FMP10,CLLS,Carlen25,Carlen17,CFL14,CFFP,FMP08,FMP09,CiFe08,Frank24,FrPe24,Frank22,CFW,CFLL24,Seuffert,WeiWu24,WeiWu22,FaInLe,MVNonA,ChenLuTang23,ChenLuTang24,ChenLuTang25,ChenLuTang26,ChenLuTangWang26,InKim}.  In recent years, there has been  great attention to the explicit estimate for the constant $C_{BE}$ in \eqref{eq:BEstab}. In \cite{DEFFL25}, Dolbeault, Esteban, Figalli, Frank and Loss established a lower bound with optimal dimensional dependence for $C_{BE}$ by using the competing symmetries, a flow based on continuous Steiner symmetrization that interpolates continuously between a function and its symmetric non-increasing rearrangement, and refined estimates for Sobolev functional in the neighborhood of $\mathcal M$. In the papers \cite{ChenLuTang24,ChenLuTang25,ChenLuTang26}, Chen, Lu, and Tang obtained the explicit lower bounds for the stability constants in the Hardy-Littlewood-Sobolev and the fractional Sobolev inequalities on the Euclidean space. In \cite{ChenLuTangWang26}, Chen, Lu, Tang and Wang developed a new approach based on the CR Yamabe flow to obtain the asymptotically sharp stability estimate for the Sobolev inequality on the Heisenberg group. We refer to the paper \cite{Konig} in which Konig established a further result on the attainability of the constant $C_{BE}$ in \eqref{eq:BEstab}.


For the family of $L^2-$CKN inequalities \eqref{eq:L2CKN}, the first stability estimate was proved by McCurdy and Venkatraman in \cite{MVNonA} for the case $a=0$ and $b=-1$, i.e, they established the stability version for the Heisenberg-Pauli-Weyl uncertainty principle \eqref{eq:HPWup}. More concretely, based on the approach of Bianchi and Egnell, McCurdy and Venkatraman proved the existence of two positive constants $C_1$ and $C_2$ such that
\begin{align*}
\int_{\R^n} |\nabla u|^2 dx &\int_{\R^n} |u|^2 |x|^2 dx - \frac{n^2}4 \lt(\int_{\R^n} |u|^2 dx\rt)^2\\
& \geq C_1 \lt(\int_{\R^n} |u|^2 dx\rt) \inf_{v\in E_{HUP}} \lt(\int_{\R^n} |u -v|^2 dx\rt) + C_2 \inf_{v\in E_{HUP}}\lt(\int_{\R^n} |u-v|^2 dx\rt)^2
\end{align*}
where $E_{HUP}$ is the set of Gaussian functions, i.e, 
\[
E_{HUP}= \{c e^{-\alpha |x|^2}\, :\, \alpha >0, \, c\in \R\}.
\]
In \cite{Fathi}, Fathi gave a simple proof of the above estimate by using the Poincar\'e inequality for Gaussian type measures. Moreover, he obtained the explicit constants $C_1 = \frac14$ and $C_2 = \frac1{16}$ which are not optimal. Later, Cazacu, Flynn, Lam and Lu in \cite{CFLL24} show that the sharp values of these constants must be $C_1 = n$ and $C_2 =1$. In addition, these authors in \cite{CFLL24} also proved the stability estimates for the CKN inequalities \eqref{eq:L2CKN} with parameters $a,b$ satisfying the conditions
\[
0\leq a\leq \frac{n-2}2,\,\, b \leq \frac{na}{n-2},\,\, \text{ and } \,\, a +b + 1 =\frac{2nb}{n-2}.
\] 
The restricted constraints on $a$ and $b$ are necessary to apply the Poincar\'e inequality for the log concave measures after making the change of variables. These restrictions on $a$ and $b$ were removed recently in \cite{DLLN26}, where Do, Lam, Lu and the second author established the sharp stability version of the CKN inequality \eqref{eq:L2CKN} for full ranges of $a$ and $b$. 

Although the stability version of the $L^2-$CKN inequalities \eqref{eq:L2CKN} was completely characterized, however there are very little results on the stability estimates for the $L^2-$CKN inequalities for the curl-free vector fields \eqref{eq:CFLHUP}. In \cite{DNCVPDE}, the authors in this paper proved the following stability estimate for the second order uncertainty principle established in \cite{CFL22}, that is the inequality \eqref{eq:CFLHUP} with  $a =0$ and $b=-1$.

\noindent{\bf Theorem B. }{\it For any $n\geq 2$, it holds for any non-zero function $u\in H^2_{0,-2}(\mathbb R^n)$ that
\begin{align}\label{eq:DNstab}
&\frac{\lt(\int_{\R^n} |\Delta u|^2 dx\rt)^{\frac12}\lt(\int_{\R^n} |\nabla u|^2 |x|^2 dx\rt)^{\frac12} -\frac{n+2}2 \int_{\R^n} |\nabla u|^2 dx}{\int_{\R^n} |\nabla u|^2 dx} \notag\\
&\quad\qquad\qquad \geq \frac1{768}\inf_{v \in E_{HUP}}\lt\{\frac{\int_{\R^n}|\nabla u - \nabla v|^2 dx}{\int_{\R^n} |\nabla u|^2 dx}\,:\, \int_{\R^n} |\nabla v|^2 dx = \int_{\R^n} |\nabla u|^2 dx\rt\}.
\end{align}
}To prove \eqref{eq:DNstab}, we first established an improvement of the second order uncertainty principle for functions that are orthogonal to all radial functions. With the help of this improvement, if the left hand side of \eqref{eq:DNstab} is small, then the function $u$ is close to an even function. Then, by employing the spectral analysis for Ornstein-Uhlenbeck operator associated to Gaussian measure and Hermite polynomials, we obtained the stability result for even functions. The inequality \eqref{eq:DNstab} follows from these two ingredients. However, the constant $\frac1{768}$ is far from being sharp. In \cite{DLL26}, Do, Lam, Lu and Zhang gave another proof of the above result by using the spherical harmonic decomposition technique and Fourier transformation for radial functions. They also got a better estimate by showing that the above result holds with constant $\frac14(\sqrt{n^2 + 4n -4} -n)$. Later, this constant was proved to be sharp by Huang and Ye in \cite{HuangYe}. 

Therefore, the second goal in this paper is to establish the stability version of the $L^2-$CKN inequalities for the curl-free vector fields \eqref{eq:CFLHUP} and for the second order derivative from Theorem \ref{MainII}. To state our next main results, let us introduce some notations. For $a, b\in \R$, we define by $U_{a,b}$ the curl-free vector field
\begin{equation*}
U_{a,b}(x) =
\begin{cases}
|x|^{ -\frac n2 + a  +\frac12 \sqrt{(n-2a -2)^2 + 4(n-1)}} e^{-\frac{|x|^{a -b +1}}{a -b +1}} x &\mbox{if $1-b +a >0$}\\
|x|^{ -\frac n2 + a  -\frac12 \sqrt{(n-2a -2)^2 + 4(n-1)}} e^{\frac{|x|^{a -b +1}}{a -b +1}}x &\mbox{if $1-b +a < 0$,}
\end{cases}
\end{equation*}
and 
\[
\mathcal M_{a,b} =\{ c U_{a,b}(\lambda x)\, :\, c \in \R,\, \lambda >0\}.
\]
Note that $U_{a,b}$ is extremal curl-free vector field for the $L^2-$CKN inequality \eqref{eq:CFLHUP} and $\mathcal M_{a,b}$ is the set of extremal curl-free vector fields. For a curl-free vector field $U\in X_{a,b}(\R^n)$, we define the CKN deficit $\delta(U)$ by
\[
\delta(U) = \frac{\Big(\irn \frac{|\nabla U|^2}{|x|^{2a}} dx\Big)^{\frac12}\Big( \irn \frac{|U|^2}{|x|^{2b}} dx\Big)^{\frac12}}{\irn \frac{|U|^2}{|x|^{a+b+1}} dx} -C_1(n,a,b) 
\]
if $U\not\equiv 0$ and $\de(U) =0$ if $U\equiv 0$. Our second main result is a stability version of the $L^2-$CKN inequality for curl-free vector fields \eqref{eq:CFLHUP} and is stated as follows
\begin{theorem}\label{MainI}
Let $n\geq 2$ and $a,b \in \R$  such that $1-b+ a\not =0$, $n-2a -4\not=0$, and $(n-2a)^2  > 4(n+1)$. Then, for any curl-free vector field $U\in X_{a,b}(\R^n)$, it holds
\begin{equation*}
\delta(U) \geq C \inf\left\{\frac{\irn \frac{|U - V|^2}{|x|^{a+b+1}} dx}{\irn \frac{|U|^2}{|x|^{a+b+1}} dx}\, :\, V \in \mathcal M_{a,b},\quad \irn \frac{|U|^2}{|x|^{a+b+1}} dx = \irn \frac{|V|^2}{|x|^{a+b+1}} dx\right\}
\end{equation*}
for some positive constant $C$ depending only on $n,a,b$.
\end{theorem}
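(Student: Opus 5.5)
We use the scalar potential $U=\nabla u$ and expand $u$ in spherical harmonics, $u(x)=\sum_{k\ge0}\sum_j f_{k,j}(r)Y_{k,j}(\sigma)$, with $r=|x|$ and $\sigma=x/|x|$. The argument has three steps.

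\textbf{Step 1: decompose the three integrals.} For curl-free $U$, each of the three integrals splits into a sum over $k$ of one-dimensional quadratic forms in $f_k$:
\[
\int_{\R^n}\frac{|U|^2}{|x|^{2b}}dx=\sum_k\i0i\Big(f_k'^2+\frac{\lambda_k f_k^2}{r^2}\Big)r^{n-1-2b}dr,\qquad \lambda_k=k(k+n-2),
\]
and similarly for the weight $|x|^{-(a+b+1)}$. For $\int|\nabla U|^2|x|^{-2a}$ we use the Hessian identity for $|\nabla^2 u|^2$ on each mode, which produces a quadratic form in $f_k'',f_k',f_k$ with weight $r^{n-1-2a}$. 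As in \cite{CFL23}, the rescaled quantity $\sqrt{\int|\nabla U|^2|x|^{-2a}}\sqrt{\int|U|^2|x|^{-2b}}$ is bounded below by a sum of modal quantities. This uses Cauchy--Schwarz in the form
\[
\sqrt{A}\sqrt{B}\ge\sum_k\sqrt{A_k}\sqrt{B_k},\qquad A=\sum_k A_k,\quad B=\sum_k B_k .
\]
We then prove a one-dimensional CKN inequality on each mode, $\sqrt{A_k B_k}\ge C^{(k)}D_k$, where $D_k$ is the $k$-th contribution to $\int|U|^2|x|^{-(a+b+1)}$.

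\textbf{Step 2: spectral gap for the non-radial modes.} The key claim is that $C^{(0)}=C_1(n,a,b)$ and that $C^{(k)}\ge C_1(n,a,b)+\eta$ for all $k\ge1$, with some $\eta=\eta(n,a,b)>0$. Here the strict inequality $(n-2a)^2>4(n+1)$ is used, together with $n-2a-4\ne0$ to rule out degenerate cases. I would obtain this by writing the $k$-mode as a Hardy-type inequality with a Hardy constant that increases in $\lambda_k$. I expect this to be the main obstacle. The reason is that the extremal in \eqref{eq:CFLHUP} is radial for $u$, i.e.\ $U=\phi(r)x$, and the weighted Rellich-type constants for the $k=1$ mode can compete with the radial one. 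The gap therefore has to be verified through an explicit comparison of the square roots $\sqrt{(n-2a-2)^2+4(n-1)}$ against their analogues for $k\ge1$. Given the gap, the deficit controls the non-radial part:
\[
\delta(U)\ge \eta\,\frac{\sum_{k\ge1}D_k}{\sum_k D_k}.
\]

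\textbf{Step 3: the radial part and assembly.} For $k=0$, set $g=f_0'$, so that $U=g(r)\frac{x}{r}$. We change variables $g(r)=r^{\gamma}h(s)$ with $s=r^{a-b+1}/|a-b+1|$, choosing $\gamma$ so that the extremal becomes $h=e^{-s}$. The radial inequality then reduces to a Heisenberg-type inequality on the half-line for a measure $s^{\mu}ds$, since the factor $\sqrt{(n-2a-2)^2+4(n-1)}$ comes from a ground-state substitution. For that inequality, the sharp stability estimate of \cite{DLLN26} (equivalently, a Poincar\'e inequality for the Laguerre/Gamma measure) gives
\[
\delta_0\ge c\inf_{c',\lambda}\frac{\|g-c'g_\lambda\|^2}{\|g\|^2},
\]
with the normalization in $\int|\cdot|^2|x|^{-(a+b+1)}$. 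To combine the two parts, we write $\delta(U)$ in terms of a convex combination of the modal deficits. We choose the optimal radial $V\in\mathcal M_{a,b}$ for $f_0$ and rescale $c$ to match the norm constraint. This rescaling costs at most a constant multiple, because $\|V\|^2=\|U\|^2$ and $\big|\|U_0\|-\|U\|\big|^2\le\sum_{k\ge1}D_k$. Finally, the distance splits orthogonally as
\[
\|U-V\|^2=\|U_0-V\|^2+\sum_{k\ge1}D_k,
\]
and the estimate of Theorem \ref{MainI} follows with $C=\min(\eta,c)/C'$.
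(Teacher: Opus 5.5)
\paragraph{Overall comparison.} Your outline follows the same architecture as the paper's proof. Both decompose the potential in spherical harmonics, apply Cauchy--Schwarz across modes, obtain a strict improvement of the constant on the modes $k\ge1$ (the paper's Lemma~\ref{Improved2ndvf}), and prove radial stability via a ground-state substitution plus a Poincar\'e inequality. Your Laguerre/Gamma version of that last step is equivalent to the paper's reduction to $s^{\mu}e^{-s^2/2}$ in Lemma~\ref{stabmodel}. Your assembly renormalizes a near-optimal radial $V$ using $(\|U\|-\|U_0\|)^2\le\sum_{k\ge1}D_k$. This is more direct than the paper's compactness argument for existence of a minimizer, and it works.

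\paragraph{The gap: Step 2.} You assert $C^{(k)}\ge C_1(n,a,b)+\eta$ for $k\ge1$ but do not prove it, and this is the only place where $(n-2a)^2>4(n+1)$ enters. ``A Hardy constant increasing in $\lambda_k$'' is not enough: each mode couples $f_k'$ and $f_k$ in three different weighted norms, so there is no single one-dimensional inequality to be monotone in. The missing idea is a second Cauchy--Schwarz inside each mode, with a careful redistribution of the term $2c_k\int_0^\infty (f_k')^2r^{n-2a-3}dr$. Write $A_k=X_1+c_kX_2$ and $B_k=Y_1+c_kY_2$ with
\[
X_1=\int_0^\infty (f_k'')^2r^{n-2a-1}dr+(n-1+\epsilon c_k)\int_0^\infty (f_k')^2r^{n-2a-3}dr,\qquad Y_1=\int_0^\infty (f_k')^2r^{n-2b-1}dr,
\]
\[
X_2=(2-\epsilon)\int_0^\infty (f_k')^2r^{n-2a-3}dr+\big(c_k+2(n-3a-4)\big)\int_0^\infty f_k^2r^{n-2a-5}dr,\qquad Y_2=\int_0^\infty f_k^2r^{n-2b-3}dr.
\]
Then $\sqrt{A_kB_k}\ge\sqrt{X_1Y_1}+c_k\sqrt{X_2Y_2}$.

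\paragraph{How each pair beats $C_1$.} For the first pair, apply Lemma~\ref{modelineq} to $f_k'$ with $C=n-1+\epsilon c_k\ge(1+\epsilon)(n-1)$; this gives a constant strictly above $C_1(n,a,b)$. For $X_2$, first apply the Hardy inequality \eqref{eq:1DHardy} to $(1-\epsilon)\int_0^\infty (f_k')^2r^{n-2a-3}dr$. Then apply Lemma~\ref{modelineq} with $A=n-2a-2$, $B=n-2b-2$, using $c_k\ge n-1$ to reduce to the worst case $k=1$. This second constant exceeds $C_1$ for small $\epsilon$ exactly when $2(n-2a-4)^2+8(n-3a-4)>(n-2a-2)^2$, which is equivalent to $(n-2a)^2>4(n+1)$.

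\paragraph{Why both parts of the split are needed.} With $\epsilon=0$, the first pair only yields $C_1$, so this route gives no gap. If no derivative term is moved into the second pair, that pair reduces to a plain Cauchy--Schwarz with constant $\sqrt{c_k+2(n-3a-4)}$. That constant loses the $\frac{a-b+1}{2}$ contribution and need not exceed $C_1$.

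Without this argument or an equivalent one, Step 2 remains unproved. Consequently your bound $\delta(U)\ge\eta\sum_{k\ge1}D_k/\sum_kD_k$ on the non-radial part is not established.
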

Remark that the constant $C$ can be computed explicitly from the proof of Theorem \ref{MainI}, however it is not optimal.  We now give some consequences of Theorem \ref{MainI}. 
\begin{corollary}
	Let $b = -a -1$, $a > -1$ and $(n-2a)^2 > 4(n+1)$. Then,  we have the following weighted Heisenberg-Pauli-Weyl uncertainty principle for curl-free vector fields:
\[
\delta(U) \geq C \inf\left\{\frac{\irn |U - V|^2 dx}{\irn |U|^2 dx}\, :\, V \in \mathcal M_{a,-a-1},\quad \irn |U|^2 dx = \irn |V|^2 dx\right\}
\]
for any curl-free vector field $U\in X_{a,-a-1}(\R^n)$.
\end{corollary}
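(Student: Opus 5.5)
This corollary is the specialization $b=-a-1$ of Theorem \ref{MainI}, so the plan is to check the hypotheses of that theorem and then simplify the weights. With $b=-a-1$ we get $a+b+1=0$, so the weight $|x|^{-(a+b+1)}$ in the deficit and in the distance functional is identically $1$. Moreover
\[
1-b+a = 2(1+a) > 0 \quad \text{since } a>-1,
\]
so the condition $1-b+a\neq 0$ of Theorem \ref{MainI} holds. The condition $(n-2a)^2>4(n+1)$ is assumed directly.

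Two hypotheses of Theorem \ref{MainI} remain: $n\geq 2$ and $n-2a-4\neq 0$. The corollary does not list them explicitly, so we read them as implicit, carried over from the standing hypotheses of Theorem \ref{MainI}.

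\begin{itemize}
\item $n\geq 2$ is simply part of the setting of Theorem \ref{MainI}.
\item For $n-2a-4\neq0$ we first check whether it follows from $(n-2a)^2>4(n+1)$. If $n=2a+4$, the condition becomes $16>4(n+1)$, that is $n<3$, so $n=2$ and $a=-1$. This value is excluded by $a>-1$.
\end{itemize}

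So the second condition is automatic for $n\geq 3$. For $n=2$ it is also automatic, since $n=2a+4$ would force $a=-1$, which is again excluded by $a>-1$. Hence under the hypotheses of the corollary (with $n\geq2$ understood from Theorem \ref{MainI}), every assumption of Theorem \ref{MainI} is satisfied.

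Applying Theorem \ref{MainI} to a curl-free $U\in X_{a,-a-1}(\R^n)$ and replacing $|x|^{a+b+1}$ by $1$ everywhere gives exactly the displayed inequality. The infimum runs over $V\in\mathcal M_{a,-a-1}$ with $\irn|V|^2dx=\irn|U|^2dx$, and the constant $C$ depends only on $n$ and $a$.

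The proof has no genuine obstacle. The only point needing care is the parameter bookkeeping that confirms $n-2a-4\neq0$ is implied, or else implicitly assumed.
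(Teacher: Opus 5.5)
Your proposal is correct and matches the paper. The paper presents this corollary as the direct specialization $b=-a-1$ of Theorem \ref{MainI}: the weight becomes trivial because $a+b+1=0$, and $1-b+a=2(1+a)>0$. Your additional check that $n-2a-4\neq 0$ is forced by $a>-1$ and $(n-2a)^2>4(n+1)$ is right, and it covers a detail the paper leaves implicit.
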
 
\begin{corollary}
	Let $b = -a$, $a  > -\frac12$ and $(n-2a)^2 > 4(n+1)$. Then,  we have the following weighted Hydrogen uncertainty principle for curl-free vector fields:
\[
\delta(U) \geq C \inf\left\{\frac{\irn \frac{|U - V|^2}{|x|} dx}{\irn \frac{|U|^2}{|x|} dx}\, :\, V \in \mathcal M_{a,-a},\quad \irn \frac{|U|^2}{|x|} dx = \irn \frac{|V|^2}{|x|} dx\right\}
\]
for any curl-free vector field $U\in X_{a,-a}(\R^n)$.

\end{corollary}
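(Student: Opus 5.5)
This corollary is the special case $b=-a$ of Theorem \ref{MainI}, so my plan is to substitute $b=-a$ into that theorem and check that every hypothesis of Theorem \ref{MainI} follows from the hypotheses of the corollary. I read the corollary as having the standing assumption $n\geq 2$ of Theorem \ref{MainI}.

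First I compute the quantities that change when $b=-a$. We have $a+b+1=1$, so the weight $|x|^{-(a+b+1)}$ in the deficit $\delta(U)$ and in the distance functional becomes $|x|^{-1}$. Also $1-b+a=1+2a$. The assumption $a>-\frac12$ therefore gives $1-b+a>0$, and in particular $1-b+a\neq 0$. In this regime $U_{a,-a}$ is the first branch in the definition of $U_{a,b}$, namely
\[
U_{a,-a}(x)=|x|^{-\frac n2+a+\frac12\sqrt{(n-2a-2)^2+4(n-1)}}e^{-\frac{|x|^{1+2a}}{1+2a}}x,
\]
and $\mathcal M_{a,-a}$ is the family of its multiples $cU_{a,-a}(\lambda x)$.

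The assumption $(n-2a)^2>4(n+1)$ is taken verbatim from the corollary. The only hypothesis of Theorem \ref{MainI} that the corollary does not state is $n-2a-4\neq 0$, and this is the one point that needs an argument. Suppose $n=2a+4$. Then $(n-2a)^2=16$, so the assumption $(n-2a)^2>4(n+1)$ becomes $16>4n+4$, that is $n<3$. With $n\geq 2$ this forces $n=2$, and then $a=(n-4)/2=-1$. This contradicts $a>-\frac12$. Hence $n-2a-4\neq 0$ holds automatically.

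With all hypotheses of Theorem \ref{MainI} verified for $b=-a$, I apply the theorem to an arbitrary curl-free $U\in X_{a,-a}(\R^n)$. Writing $|x|^{-(a+b+1)}=|x|^{-1}$ throughout yields exactly the stated inequality, with $C=C(n,a)>0$ being the constant of Theorem \ref{MainI} at $b=-a$. I expect no genuine obstacle: the only nonroutine step is the short exclusion of the degenerate case $n=2a+4$ above.
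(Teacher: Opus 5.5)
Your proposal is correct and matches the paper, which presents this corollary as the direct specialization $b=-a$ of Theorem~\ref{MainI}, using $a+b+1=1$ and $1-b+a=1+2a>0$. Your explicit check that $n-2a-4\neq 0$ is automatic ($n=2a+4$ together with $(n-2a)^2>4(n+1)$ would force $n=2$ and $a=-1$, contradicting $a>-\frac12$) fills in a hypothesis of Theorem~\ref{MainI} that the paper leaves unverified.
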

Put 
\begin{equation*}
\psi(x) = 
\begin{cases}
\int_{|x|}^\infty r^{ -\frac n2 + a +1 +\frac12 \sqrt{(n-2a -2)^2 + 4(n-1)}} e^{-\frac{r^{a -b +1}}{a -b +1}} dr &\mbox{if $1-b+a >0$,}\\
\int_0^{|x|} r^{ -\frac n2 + a +1 -\frac12 \sqrt{(n-2a -2)^2 + 4(n-1)}} e^{\frac{r^{a -b +1}}{a -b +1}} dr &\mbox{if $1-b+a <0$,}
\end{cases}
\end{equation*}
and 
\[
\mathcal M_1 = \left\{c \psi(\lambda x)\, :\, c \in \R, \lambda >0 \right\}.
\]
Let $u$ be the scalar potential of the curl-free vector field $U$. From Theorem \ref{MainI}, we have the following 
\begin{corollary}
Let $n\geq 2$ and $a, b$ be  real numbers satisfying $1-b+a \not=0$, $n -2a -4 \not=0$ and  $(n-2a)^2 > 4(n+1)$. Then, for any non-zero function $u$ such that $\nabla u\in X_{a,b}(\R^n)$, we have 
\begin{align*}
&\frac{\Big(\irn \frac{|\nabla^2 u|^2}{|x|^{2a}} dx\Big)^{\frac12}\Big( \irn \frac{|\nabla u|^2}{|x|^{2b}} dx\Big)^{\frac12}}{\irn \frac{|\nabla u|^2}{|x|^{a+b+1}} dx} -C_1(n,a,b) \\
&\qquad\qquad \geq C \inf\left\{\frac{\irn \frac{|\nabla u - \nabla v|^2}{|x|^{a+b+1}} dx}{\irn \frac{|\nabla u|^2}{|x|^{a+b+1}} dx}\, :\, v \in \mathcal M_{1},\quad \irn \frac{|\nabla u|^2}{|x|^{a+b+1}} dx = \irn \frac{|\nabla v|^2}{|x|^{a+b+1}} dx\right\},
\end{align*}
where $\nabla^2 u$ denotes the Hessian of $u$ and $|\nabla^2 u|^2 = \sum_{i,j=1}^n (\pa_{ij} u)^2$.
\end{corollary}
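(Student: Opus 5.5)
This corollary is a direct transcription of Theorem \ref{MainI} to the scalar potential, so I would prove it by substitution rather than by any new estimate. Let $u$ be a non-zero function with $\nabla u\in X_{a,b}(\R^n)$ and put $U=\nabla u$. Then $U$ is a curl-free vector field in $X_{a,b}(\R^n)$, and the hypotheses on $n,a,b$ are exactly those of Theorem \ref{MainI}.

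\textbf{Step 1: the deficits agree.} Since $\partial_j U_i=\partial_{ij}u$, we have $|\nabla U|^2=\sum_{i,j}(\partial_j U_i)^2=|\nabla^2 u|^2$ pointwise. Hence the left-hand side of the corollary is exactly $\delta(U)$, and Theorem \ref{MainI} bounds it below by $C$ times the infimum over $V\in\mathcal M_{a,b}$ subject to the normalization $\irn |V|^2|x|^{-a-b-1}dx=\irn |U|^2|x|^{-a-b-1}dx$.

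\textbf{Step 2: the extremal sets correspond.} I would show that $\{\nabla v : v\in\mathcal M_1\}=\mathcal M_{a,b}$. The function $\psi$ is radial, so $\nabla\psi(x)=\psi'(|x|)\,x/|x|$.

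\emph{Case $1-b+a>0$.} Here $\psi(x)=\int_{|x|}^\infty r^{-\frac n2+a+1+\frac12\sqrt{\cdot}}e^{-\frac{r^{a-b+1}}{a-b+1}}dr$, so $\psi'(r)=-r^{-\frac n2+a+1+\frac12\sqrt{\cdot}}e^{-\frac{r^{a-b+1}}{a-b+1}}$. Therefore
\[
\nabla\psi(x)=-|x|^{-\frac n2+a+\frac12\sqrt{\cdot}}e^{-\frac{|x|^{a-b+1}}{a-b+1}}x=-U_{a,b}(x).
\]
\emph{Case $1-b+a<0$.} Here $\psi$ is an integral from $0$ to $|x|$, so the sign flips and the same computation gives $\nabla\psi=U_{a,b}$.

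\emph{Dilations.} For $v=c\psi(\lambda x)$ we get $\nabla v(x)=c\lambda(\nabla\psi)(\lambda x)=\pm c\lambda\,U_{a,b}(\lambda x)$. Since $c\in\R$ is arbitrary, $\pm c\lambda$ ranges over all of $\R$. This gives the equality of sets.

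\textbf{Well-definedness of $\psi$.} Convergence of the integral defining $\psi$ must also be checked.
\begin{itemize}
\item When $1-b+a>0$, the integrand decays exponentially at infinity, so the integral converges.
\item When $1-b+a<0$, the exponential factor $e^{r^{a-b+1}/(a-b+1)}$ tends to $0$ rapidly as $r\to0^+$, so integrability at $0$ holds.
\end{itemize}
This point is the only place where any care is needed, and it is routine.

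\textbf{Conclusion.} Substituting $V=\nabla v$, the normalization constraint and the quotient in Theorem \ref{MainI} become verbatim those in the corollary. The two infima therefore coincide, and the inequality follows with the same constant $C$.

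I expect no real obstacle. The only steps needing care are the sign and exponent bookkeeping in $\nabla\psi$ and the convergence check for $\psi$.
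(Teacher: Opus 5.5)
Your proof is correct and follows the paper's route exactly. The paper likewise derives the corollary from Theorem \ref{MainI} by setting $U=\nabla u$, using $|\nabla U|^2=|\nabla^2 u|^2$ and the identification $\{\nabla v: v\in\mathcal M_1\}=\mathcal M_{a,b}$ (via $\nabla\psi=\mp U_{a,b}$), which you verify correctly. One caveat comes from the statement itself rather than your argument: ``non-zero'' should be read as $\nabla u\not\equiv 0$, since for a non-zero constant $u$ the quotient on the left is $0/0$.
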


Our last main result concerns with  the stability estimate for $L^2-$CKN inequality for second order derivatives in Theorem \ref{MainII}. Let us denote
 \[
 \mathcal N_{a,b} = \{c \varphi(\lambda x)\, :\, c \in \R,\, \lambda >0\},
 \]
where $\varphi$ is defined by \eqref{eq:extremavphi}. For a function $u\in Y_{a,b}(\R^n)$, we define
\[
\delta_{CKN}(u) = \frac{\Big(\irn \frac{|\Delta u|^2}{|x|^{2a}} dx \Big)^{\frac12}\Big(\irn \frac{|\nabla u|^2}{|x|^{2b}} dx\Big)^{\frac12}}{\irn \frac{|\nabla u|^2}{|x|^{a+b+1}} dx} -C_2(n,a,b)
\]
if $u\not\equiv 0$ and $\delta_{CKN}( u) =0$ if $u \equiv 0$. Our next result reads as follows.
\begin{theorem}\label{MainIII}
Let $n\geq 2$ and $a, b$ be real numbers such that $1-b + a\not=0$, $n-2a -4\not=0$ and $(n-2a -2)^2 > 8(1+a)(1+2a)$. Then,  for any function $u\in Y_{a,b}(\R^n)$, we have
\begin{equation*}
\delta_{CKN}(u) \geq C \inf\left\{\frac{\irn \frac{|\nabla u - \nabla v|^2}{|x|^{a+b+1}} dx}{\irn \frac{|\nabla u |^2}{|x|^{a+b+1}} dx}\, :\, v \in \mathcal N_{a,b},\quad \irn \frac{|\nabla u |^2}{|x|^{a+b+1}} dx = \irn \frac{|\nabla v|^2}{|x|^{a+b+1}} dx\right\}
\end{equation*}
for some positive constant $C$ depending only on $n,a,b$.
\end{theorem}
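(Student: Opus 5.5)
We prove Theorem \ref{MainIII} by the spherical harmonic decomposition. We reduce to one-dimensional weighted inequalities for each mode, prove a strict (gap) improvement for the non-radial modes, and then a stability estimate for the radial mode.

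\textbf{Decomposition.} Write $u(x)=\sum_{k\ge 0}\sum_{j} u_{k,j}(r)Y_{k,j}(\sigma)$ with $r=|x|$, where $Y_{k,j}$ are orthonormal spherical harmonics of degree $k$ and eigenvalue $c_k=k(k+n-2)$. Put
\[
A=\irn \frac{|\Delta u|^2}{|x|^{2a}}dx,\qquad B=\irn\frac{|\nabla u|^2}{|x|^{2b}}dx,\qquad D=\irn\frac{|\nabla u|^2}{|x|^{a+b+1}}dx .
\]
Each of $A,B,D$ splits as a sum over $(k,j)$ of one-dimensional quantities $A_k,B_k,D_k$ in $u_{k,j}$. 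Here $\Delta u$ contributes $u_k''+\frac{n-1}r u_k'-\frac{c_k}{r^2}u_k$, and $|\nabla u|^2$ contributes $|u_k'|^2+\frac{c_k}{r^2}u_k^2$.

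\textbf{Mode inequalities.} For $k=0$ set $v=u_0'$. Then $A_0=\int |v'+\frac{n-1}rv|^2r^{n-1-2a}dr$, and the scalar $L^2$-CKN argument (expanding the square with the ansatz behind $\varphi$) gives
\[
\sqrt{A_0B_0}\ge C_2 D_0 .
\]
This is exactly the content of Theorem \ref{MainII}. For $k\ge1$ we need the improved bound $\sqrt{A_kB_k}\ge (C_2+\eta)D_k$ for some $\eta=\eta(n,a,b)>0$ uniform in $k$. By AM--GM (after optimizing the dilation, $\sqrt{A_kB_k}=\min_t \frac12(tA_k+t^{-1}B_k)$), this reduces to weighted Rellich/Hardy-type inequalities for $u_k$. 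The hypothesis $(n-2a-2)^2>8(1+a)(1+2a)$ is the strict form of the condition that made $k=0$ the worst mode in the proof of Theorem \ref{MainII}, and strictness is what produces the gap $\eta$. Summing over modes, with $D=D_0+D_\perp$ where $D_\perp=\sum_{k\ge1}D_k$, we get
\[
\sqrt{AB}\ \ge\ \sum_k\sqrt{A_kB_k}\ \ge\ C_2 D+\eta D_\perp .
\]
Here we use Cauchy--Schwarz in the form $\sqrt{\sum A_k}\sqrt{\sum B_k}\ge \sum\sqrt{A_kB_k}$. Hence $\delta_{CKN}(u)\ge \eta D_\perp/D$, so the non-radial part is controlled by the deficit. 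The same Cauchy--Schwarz step also gives $\delta_{CKN}(u)\ge \big(\sqrt{A_0B_0}-C_2D_0\big)/D$, with a loss only in the radial part.

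\textbf{Radial stability.} For $k=0$, change variables $s=r^{a-b+1}$ (or its appropriate sign version) and set $v=u_0'=r^{\gamma}e^{\mp r^{a-b+1}/(a-b+1)}w$, with $\gamma$ the exponent in $\varphi'$. The deficit identity from the expanding-the-square proof then becomes, after optimizing the scaling $\lambda$, a weighted Dirichlet energy $\int |w'|^2\,d\mu$ for a probability-like measure $\mu$ of generalized gamma type. A Poincar\'e inequality for $\mu$ on $(0,\infty)$ gives
\[
\int|w-\bar w|^2\,d\mu\le C\int |w'|^2\,d\mu .
\]
A weighted Hardy inequality for $\mu$ proves this and needs no log-concavity, as in \cite{DLLN26}. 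This bounds $\inf_{c,\lambda}\int|u_0'-c\varphi'(\lambda\cdot)|^2 r^{n-1-a-b-1}\,dr$ by $C D_0\,\delta_{CKN}$.

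\textbf{Combining.} The distance to $\mathcal N_{a,b}$ is at most the radial distance plus $D_\perp$, because $\nabla v$ is radial and so orthogonal to the higher modes. The constraint $\int|\nabla v|^2 r^{-(a+b+1)}=D$ is handled by the standard elementary rescaling of $c$, which at most doubles the distance. For large deficit the statement is trivial, since the infimum is at most $4$.

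\textbf{Main obstacle.} We expect the hardest step to be the uniform gap $\eta$ for $k\ge1$. The Laplacian couples $u_k''$, $u_k'/r$ and $c_ku_k/r^2$, so we must show that the resulting quadratic form in $(u_k',u_k/r)$ dominates $(C_2+\eta)$ times the mixed term for every $k$. Our first attempt is to write $u_k'$ in terms of $f=u_k'$ and $g=u_k/r$, integrate the cross terms by parts, and check positivity of a $k$-dependent polynomial in $c_k$. The minimum should occur at $k=1$, giving an explicit $\eta$ under the strict inequality.
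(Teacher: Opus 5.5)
Your architecture matches the paper's. You decompose into spherical harmonics, prove a strictly better constant on the non-radial modes so that their share of $\int|\nabla u|^2|x|^{-(a+b+1)}dx$ is controlled by $\delta_{CKN}(u)$, prove one-dimensional stability for the radial mode, and recombine with the rescaling of $c$ and the trivial large-deficit case. Your radial step, a Poincar\'e inequality for a gamma-type measure (Laguerre spectral gap or a Muckenhoupt-type criterion), is a legitimate substitute for the paper's substitution $s\sim r^{(a-b+1)/2}$. That substitution leads to the uniformly log-concave weight $s^{\beta}e^{-s^2/2}$ and the explicit constant $a-b+1$. Note that with your $s=r^{a-b+1}$ the energy is $\int (h')^2\,s\,d\mu$ against $\int (h-c)^2\,d\mu$, so the two sides do not carry the same measure.

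The genuine gap is the step you flag yourself. The uniform bound $\sqrt{A_kB_k}\ge (C_2+\eta)D_k$ for all $k\ge1$ (the paper's Lemma~\ref{Improved2ndCKN1}) is asserted, not proved, and it is the heart of the theorem. Your plan to check positivity of a polynomial in $c_k$ misses the difficulty: $A_k$, $B_k$, $D_k$ carry three different radial weights. So $tA_k+t^{-1}B_k-2(C_2+\eta)D_k$ is not a pointwise quadratic form in $(u_k',u_k/r)$, and positivity has to come from the Heisenberg-type inequality of Lemma~\ref{modelineq}, applied separately to $u_k'$ and to $u_k$. Moreover, the splitting used for Theorem~\ref{MainII} cannot give a gap. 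There $A_k=X_1+c_kX_2$ with $X_1=\int(u_k'')^2r^{n-2a-1}dr+(n-1)(1+2a)\int(u_k')^2r^{n-2a-3}dr$, and the $u_k'$-piece gets exactly $C_2$ for every $k$, since this one-dimensional bound is sharp. Any gap would then sit only on the part $c_k\int u_k^2r^{n-a-b-4}dr$ of $D_k$. That part can be arbitrarily small compared with $\int(u_k')^2r^{n-a-b-2}dr$ (take $u_k$ rapidly oscillating), so no uniform $\eta$ follows.

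The missing idea is to move $\epsilon c_k\int(u_k')^2r^{n-2a-3}dr$ out of the coefficient $2c_k$ and into the first piece.
\begin{itemize}
\item \textbf{First piece.} Since $c_k\ge n-1$, apply Lemma~\ref{modelineq} with $f=u_k'$, $A=n-2a$, $B=n-2b$ and $C=(n-1)(1+2a+\epsilon)$. This gives the constant $\frac12\big(a-b+1+\sqrt{(n+2a)^2+4\epsilon(n-1)}\big)>C_2$.
\item \textbf{Remainder.} The rest is $c_k\big[(2-\epsilon)\int(u_k')^2r^{n-2a-3}dr+(c_k+2(1+a)(n-2a-4))\int u_k^2r^{n-2a-5}dr\big]$. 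Apply Hardy \eqref{eq:1DHardy} to $(1-\epsilon)\int(u_k')^2r^{n-2a-3}dr$ and use $c_k\ge n-1$. Then apply Lemma~\ref{modelineq} with $f=u_k$, $A=n-2a-2$, $B=n-2b-2$ and $C=(1-\epsilon)\frac{(n-2a-4)^2}4+(n-1)+2(1+a)(n-2a-4)$.
\item \textbf{Role of the hypothesis.} The strict condition $(n-2a-2)^2>8(1+a)(1+2a)$ is equivalent to $2(n-2a-4)^2+4(n-1)+8(1+a)(n-2a-4)>(n+2a)^2$. This is exactly what allows some $\epsilon>0$ while keeping the second constant above $C_2$ too.
\end{itemize}
Cauchy--Schwarz over the two pieces and over $k$ then gives $\bar\kappa>1$.

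Without this step, or an equivalent argument, your proof is incomplete; the decomposition, recombination and large-deficit steps are fine.
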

We next provide some direct consequences of Theorem \ref{MainIII} in some special cases of parameters. 	Let $b= -a -1$, $a > -1$ and $(n-2a -2)^2 > 8(1+a)(1+2a)$, Theorem \ref{MainIII} gives the stability estimate for the weighted second order Heisenberg-Pauli-Weyl uncertainty principle 
 \begin{corollary}
Let $b= -a -1$, $a > -1$ and $(n-2a -2)^2 > 8(1+a)(1+2a)$. For $u\in Y_{a,-a-1}(\R^n)$, we have
\begin{equation}\label{eq:stabweightedHUP}
\delta_{CKN}(u) \geq C \inf\left\{\frac{\irn |\nabla u - \nabla v|^2 dx}{\irn |\nabla u |^2 dx}\, :\, v \in \mathcal N_{a,-a-1},\quad \irn |\nabla u |^2 dx = \irn |\nabla v|^2dx\right\}.
\end{equation}
\end{corollary}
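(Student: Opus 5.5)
The corollary is the case $b=-a-1$ of Theorem \ref{MainIII}, so the plan is to check the hypotheses and then rewrite the weights. With $b=-a-1$ we have $1-b+a = 2(1+a)$, which is positive because $a>-1$; in particular $1-b+a\neq 0$. Moreover $a+b+1 = 0$, so $|x|^{-(a+b+1)}\equiv 1$. The weights in both the deficit $\delta_{CKN}$ and the infimum on the right-hand side of Theorem \ref{MainIII} are therefore trivial, and its conclusion becomes exactly \eqref{eq:stabweightedHUP} with $\mathcal N_{a,-a-1}$ in place of $\mathcal N_{a,b}$. The condition $(n-2a-2)^2>8(1+a)(1+2a)$ is assumed directly, and $n\ge 2$ is implicit from the setting of the theorem.

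The one hypothesis of Theorem \ref{MainIII} not listed in the corollary is $n-2a-4\neq 0$. I would deal with it as follows. If $n=2a+4$, then $a=(n-4)/2$, and we get $(n-2a-2)^2=4$ while $8(1+a)(1+2a)=8\cdot\frac{n-2}{2}\cdot(n-3)=4(n-2)(n-3)$. The required inequality $4>4(n-2)(n-3)$ means $(n-2)(n-3)<1$, which for integers forces $n\in\{2,3\}$. In those cases $a=-1$ or $a=-\tfrac12$. Now $a=-1$ is excluded by $a>-1$, but $a=-\tfrac12$ with $n=3$ is admissible. So in that single borderline case the corollary cannot be read off from Theorem \ref{MainIII} as stated.

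There are two ways to handle this case. The simplest is to add $n\neq 2a+4$ to the statement of the corollary, interpreting it as implicitly inherited from Theorem \ref{MainIII}. Alternatively, one can note that for $b=-a-1$ the extremal is $\varphi(x)=c\,e^{-|x|^{2(1+a)}/(2(1+a))}$, which is the $n>2a+4$ form with exponent $0$. The same spherical-harmonic argument should then go through, since the condition $n\neq 2a+4$ only serves to select which branch of $\varphi$ lies in $Y_{a,b}$.

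The main (and minor) obstacle is this hypothesis bookkeeping for the case $n=3$, $a=-\tfrac12$; the rest is direct substitution.
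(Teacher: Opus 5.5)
Your proposal is correct and follows the paper's route: the paper gets this corollary simply by putting $b=-a-1$ into Theorem \ref{MainIII}, where $1-b+a=2(1+a)>0$ and $a+b+1=0$. Your bookkeeping is also right. The hypothesis $n-2a-4\neq 0$ is silently dropped, and $n=3$, $a=b=-\frac12$ is the one admissible case it leaves uncovered (there $\varphi$ in \eqref{eq:extremavphi} is not even defined). The paper does not address this case.

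One correction to your second option for that case. The condition $n\neq 2a+4$ does not decide which branch of $\varphi$ lies in $Y_{a,b}$. Membership depends only on $\nabla\varphi$ and $\Delta\varphi$, and for $b=-a-1$ the two branches are $e^{-|x|^{2(1+a)}/(2(1+a))}$ and $1-e^{-|x|^{2(1+a)}/(2(1+a))}$, whose gradients differ only by sign. In Section \ref{s2} the condition is used to normalize the radial component so that \eqref{eq:u01} holds with $c=0$, and the branch of $\varphi$ is chosen to match that normalization.

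The actual reason the argument survives at $n=2a+4$ is that this normalization is never used:
\begin{itemize}
\item The $k=0$ terms in \eqref{eq:integralDelta}, \eqref{eq:nablab}, \eqref{eq:nablaab} involve only $u_0'$ and $u_0''$.
\item Lemmas \ref{modelineq} and \ref{stabmodel} are applied to $f=u_0'$, so they need only \eqref{eq:daohamukbehavior}.
\item For $k\geq 1$, \eqref{eq:ukbehavior} is unaffected, the Hardy constant $(n-2a-4)^2/4$ just becomes $0$, and \eqref{eq:chooseep} reduces to $8>4$.
\end{itemize}
With this justification, and with $\mathcal N_{a,-a-1}$ generated by $e^{-|x|^{2(1+a)}/(2(1+a))}$, your second option covers the corollary as literally stated.
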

\noindent In the special case $a =0$, we recover the stability result in \cite{DNCVPDE} from \eqref{eq:stabweightedHUP} when $n\geq 5$. 

	Let $b = -a$, $a > -\frac12$ and $(n-2a -2)^2 > 8(1+a)(1+2a)$, we get from Theorem \ref{MainIII} the stability estimate for the weighted second order Hydrogen uncertainty principle
\begin{corollary}
Let $b = -a$, $a > -\frac12$ and $(n-2a -2)^2 > 8(1+a)(1+2a)$. For $u\in Y_{a,-a}(\R^n)$, we have
\begin{equation}\label{eq:Stab2ndHyUP}
\delta_{CKN}(u) \geq C \inf\left\{\frac{\irn \frac{|\nabla u - \nabla v|^2}{|x|} dx}{\irn \frac{|\nabla u |^2}{|x|} dx}\, :\, v \in \mathcal N_{a,-a},\quad \irn \frac{|\nabla u |^2}{|x|} dx = \irn \frac{|\nabla v|^2}{|x|} dx\right\}.
\end{equation}
\end{corollary}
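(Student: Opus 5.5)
This corollary is the special case $b=-a$ of Theorem \ref{MainIII}, so the plan is to verify that theorem's hypotheses and then rewrite the weights.

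First, the hypotheses. With $b=-a$ we have
\[
1-b+a = 1+2a,
\]
which is positive because $a>-\frac12$. In particular $1-b+a\neq 0$. The condition $(n-2a-2)^2>8(1+a)(1+2a)$ is assumed verbatim.

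The one hypothesis of Theorem \ref{MainIII} not listed in the corollary is $n-2a-4\neq 0$. We must either derive it or argue that it is harmless. Suppose $n=2a+4$. Then $n-2a-2=2$, so the assumed inequality would read
\[
4>8(1+a)(1+2a).
\]
Since $a=(n-4)/2\geq -1$ and $a>-\frac12$, this is equivalent to $2a^2+3a<0$, i.e. $a\in(-\frac12,0)$. That forces $n=2a+4\in(3,4)$, which is impossible for an integer $n$. Hence $n-2a-4\neq 0$ holds automatically, and this is the only step that needs a genuine check.

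Next, the weights. Since $a+b+1=1$, every weight $|x|^{-(a+b+1)}$ in Theorem \ref{MainIII} becomes $|x|^{-1}$. The deficit $\delta_{CKN}(u)$ is the one built from $|x|^{-2a}$, $|x|^{2a}$ and $|x|^{-1}$, and the extremal family is $\mathcal N_{a,-a}$.

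Finally, the extremals. Because $1-b+a=1+2a>0$, $\varphi$ is given by the first or second line of \eqref{eq:extremavphi}, according to the sign of $n-2a-4$.

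Applying Theorem \ref{MainIII} with $b=-a$ then yields \eqref{eq:Stab2ndHyUP}, with a constant $C$ depending only on $n$ and $a$.
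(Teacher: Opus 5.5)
Your proposal is correct and follows the paper's route: the corollary is Theorem \ref{MainIII} specialized to $b=-a$, so that $1-b+a=1+2a>0$ and $a+b+1=1$. Your check that $n-2a-4\neq 0$ holds automatically is a detail the paper leaves implicit; note that $4>8(1+a)(1+2a)$ is equivalent to $4a^2+6a+1<0$, not to $2a^2+3a<0$, but the former implies the latter, so your conclusion that $n=2a+4\in(3,4)$ is impossible still stands (in fact $n\in(3,\frac{5+\sqrt5}{2})$).
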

\noindent In the special case $a =0$, the estimate \eqref{eq:Stab2ndHyUP} gives us a stability result for the second order Hydrogen uncertainty principle when $n\geq 5$.

Finally, let us give some comments on our approach to prove the main results in this paper. The inequality \eqref{eq:2ndCKN} in the particular case $b = -a -1$ was proved in \cite{DNAFM} by using the factorization method and in \cite{CFL22} by using the {\it expanding-the square} method. These methods are usually used to prove the functional inequalities such as Hardy and CKN inequalities. However,  it is difficult  to apply these methods to the inequality \eqref{eq:2ndCKN} for general parameters $a, b$. Instead, our approach in this paper is to apply the spherical harmonic decomposition method to prove Theorem \ref{MainII} (i.e, to prove the inequality \eqref{eq:2ndCKN} for general parameters $a$ and $b$). It is well-known that the spherical harmonic decomposition method is a powerful mathematical tool in studying the functional inequalities, especially in the sharp forms and their stability versions (see, e.g, \cite{TerZoAiM,DNCVPDE,DLL26,CFL22} and the references therein). It enables us to reduce the proof of \eqref{eq:2ndCKNnew} for general functions  to that  for radial functions (on each node). This  makes the problem more flexible to handle. More precisely, by using this method, to prove {Theorem \ref{MainII}},  it is enough to establish  the following one-dimensional inequalities 
\begin{align*}
\Big(\int_0^\infty& (u_k'')^2r^{n-2a -1} dr+ (n-1)(1+2a) \int_0^\infty (u_k')^2 r^{n-2a -3} dr\Big) \int_0^\infty (u_k')^2 r^{n-2b -1} dr\notag\\
&\geq C_2(n,a,b)^2 \Big(\int_0^\infty (u_k')^2 r^{n - a -b -2} dr\Big)^2.
\end{align*}
for any $k\geq 0$ and
\begin{align*}
&\Big(2\int_0^\infty (u_k')^2 r^{n-2a -3} dr + (c_k + 2(1+a)(n-2a -4))\int_0^\infty u_k^2 r^{n -2a -5} dr\Big) \times \notag\\
&\qquad\qquad\qquad \times \int_0^\infty (u_k)^2 r^{n-2a -3} dr\notag\\
&\geq C_2(n,a,b)\Big(\int_0^\infty u_k^2 r^{n -a -b-4} dr\Big)^2
\end{align*}
for any $k\geq 1$ which are consequences of Lemma \ref{modelineq} and the one dimensional-Hardy inequality \eqref{eq:1DHardy} (see Section \ref{s2}). In order to prove Theorem \ref{MainI} and Theorem \ref{MainIII}, we follows the approach in \cite{DNCVPDE}, where we established the stability version of the second order Heisenberg-Pauli-Weyl uncertainty principle. Employing the spherical harmonic decomposition method and Lemma \ref{modelineq}, we will prove that the sharp constants in the weighted $L^2-$CKN inequalities for curl-free vector fields and second order derivatives can be improved by larger constants when restricted to curl-free vector fields $U = \nabla u\in X_{a,b}(\R^n)$ or to functions $u\in Y_{a,b}(\R^n)$ which are  orthogonal to all radial functions (see Lemma \ref{Improved2ndvf} and Lemma \ref{Improved2ndCKN1} below). At this step, the conditions $(n-2a)^2 > 4(n+1)$ and $(n-2a-2)^2 > 8(1+a)(1+2a)$ play the important role. These improvements imply that if $\delta(U)$ (or $\delta_{CKN}(u)$) is small then $U$ (or $\nabla u$) is close to a curl-free vector field { $V =\nabla v$ (or $\nabla v$)} in a weighted $L^2$ space on $\R^n$, where $v$ is a radial function. Using Lemma \ref{stabmodel}, we obtain the stability results for such curl-free vector field $V$ (or function $v$). Theorem \ref{MainI} and Theorem \ref{MainIII} directly follow from these two estimates.

The rest of this paper is organized as follows. In Section \ref{s2}, we prepare some ingredients which are used to prove our main results such as the spherical harmonic decomposition method, some integral identities, the one-dimension integral inequalities and their improvements. Section \ref{s3} is devoted to prove Theorem \ref{MainII}. In Section \ref{s4}, we prove Theorem \ref{MainI} on the stability version of the weighted $L^2-$CKN inequality for curl-free vector fields. In  Section \ref{s5}, we give the proof of Theorem \ref{MainIII} on the stability version of the weighted second order $L^2-$CKN inequality.

\section{Preliminaries}\label{s2}
In this section, we prepare some ingredients for  the proof of our main Theorems. Let $U\in X_{a,b}(\R^n)$ and $u\in C^\infty(\R^n\setminus\{0\})$ be a scalar potential of $U$, i.e, $\nabla u = U$. We then have
\begin{equation}\label{eq:condu1}
\lim_{|x|\to 0,\infty} |x|^{-1 + \frac n2 - a}\nabla u(x) = 0
\end{equation}
and
\begin{equation*}
\lim_{|x|\to 0,\infty} |x|^{\frac n2 - b} \nabla u(x) = 0.
\end{equation*}
We first establish  the following identity.
\begin{lemma}\label{D2toDelta}
It holds that
\begin{align}\label{eq:D2toDelta}
\int_{\R^n} \frac{|\nabla U|^2}{|x|^{2a}} dx &=\irn \frac{(\Delta u)^2}{ |x|^{2a}} dx -2a(n -2a -3)\irn \frac{|\nabla u|^2}{|x|^{2 + 2a}} dx\notag\\
&\qquad -4a(1+a) \irn \frac{\Big(\nabla u \cdot \frac x{|x|}\Big)^2 }{|x|^{2 + 2a}} dx.
\end{align}
\end{lemma}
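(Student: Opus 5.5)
The plan is to obtain \eqref{eq:D2toDelta} by integrating by parts twice, first on an annulus $A_{\ep,R}=\{\ep<|x|<R\}$, and then letting $\ep\to0$, $R\to\infty$. Throughout, $U=\nabla u$, so $|\nabla U|^2=\sum_{i,j}u_{ij}^2$. Write $w(x)=|x|^{-2a}$ and use summation over repeated indices.

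\textbf{Step 1 (formal identity).} Integrate by parts in $i$ and use $u_{iij}=(\Delta u)_j$:
\[
\int u_{ij}u_{ij}w=-\int u_j(\Delta u)_j w-\int u_ju_{ij}w_i .
\]
For the first term on the right, a second integration by parts gives
\[
-\int u_j(\Delta u)_jw=\int(\Delta u)^2w+\int \Delta u\,(\nabla u\cdot\nabla w).
\]
For the second term, use $u_ju_{ij}=\tfrac12\pa_i|\nabla u|^2$, which gives $\tfrac12\int|\nabla u|^2\Delta w$. For the mixed term $\int\Delta u\,(\nabla u\cdot\nabla w)$, integrate by parts once more:
\[
\int\Delta u\,(\nabla u\cdot\nabla w)=-\int u_iu_{ij}w_j-\int u_iu_jw_{ij}=\tfrac12\int|\nabla u|^2\Delta w-\int \nabla^2w(\nabla u,\nabla u).
\]
Adding these up, modulo boundary terms,
\[
\int|\nabla^2u|^2w=\int(\Delta u)^2w+\int|\nabla u|^2\Delta w-\int\nabla^2 w(\nabla u,\nabla u).
\]

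\textbf{Step 2 (plug in the weight).} For $w=|x|^{-2a}$ we have $\nabla w=-2a|x|^{-2a-2}x$ and
\[
\nabla^2w=-2a|x|^{-2a-2}I+4a(a+1)|x|^{-2a-4}x\otimes x,\qquad \Delta w=-2a(n-2a-2)|x|^{-2a-2}.
\]
Hence the coefficient of $|\nabla u|^2|x|^{-2a-2}$ is $-2a(n-2a-2)+2a=-2a(n-2a-3)$, and the radial term is $-4a(1+a)(\nabla u\cdot\frac{x}{|x|})^2|x|^{-2a-2}$. This is exactly \eqref{eq:D2toDelta}.

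\textbf{Step 3 (boundary terms; the main obstacle).} On the spheres $|x|=\rho$ (with $\rho=\ep$ or $\rho=R$), the boundary terms are of three types:
\[
\int_{|x|=\rho}w\,|\nabla u|\,|\nabla^2u|\,d\sigma,\qquad \int_{|x|=\rho}w\,|\nabla u|\,|\Delta u|\,d\sigma,\qquad \int_{|x|=\rho}|\nabla w|\,|\nabla u|^2\,d\sigma.
\]
The third type is bounded by $C\rho^{n-2-2a}\sup_{|x|=\rho}|\nabla u|^2=C\big(\sup_{|x|=\rho}|x|^{-1+\frac n2-a}|\nabla u|\big)^2$, which tends to $0$ by \eqref{eq:condu1}.

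The first two types involve second derivatives, which have no pointwise control. For these I would apply Cauchy--Schwarz on the sphere:
\[
\int_{|x|=\rho}w|\nabla u||\nabla^2u|\,d\sigma\le \Big(\sup_{|x|=\rho}|x|^{-1+\frac n2-a}|\nabla u|\Big)\,\rho^{\frac{n}{2}-a}\Big(\int_{|x|=\rho}|\nabla^2 u|^2\,d\sigma\Big)^{1/2}\rho^{-\frac{n-1}{2}+\frac12}.
\]
The last two factors combine to $\big(\rho\int_{|x|=\rho}|\nabla^2u|^2w\,d\sigma\big)^{1/2}$. Since $\int|\nabla^2u|^2w\,dx<\infty$ (and likewise for $\Delta u$), the function $\rho\mapsto\rho\int_{|x|=\rho}|\nabla^2u|^2w\,d\sigma$ is integrable against $d\rho/\rho$. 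It therefore has $\liminf=0$ both as $\rho\to0$ and as $\rho\to\infty$, so along suitable sequences $\ep_k\to0$ and $R_k\to\infty$ all boundary terms vanish.

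The remaining point is that finiteness of $\int|\nabla U|^2|x|^{-2a}dx$ is part of the hypothesis $U\in X_{a,b}$, while finiteness of $\int(\Delta u)^2|x|^{-2a}dx$ follows from $|\Delta u|^2\le n|\nabla^2u|^2$. The interior integrals of the gradient terms converge because $\int|\nabla u|^2|x|^{-2a-2}dx<\infty$, which follows by the weighted Hardy inequality under \eqref{eq:condu1}. If there is any doubt here, the identity can be stated as holding whenever both sides are finite.
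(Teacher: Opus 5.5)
Your proof is correct and follows essentially the same route as the paper. You integrate by parts on the annuli $A_{\ep,R}$ with the weight $w=|x|^{-2a}$; you package this as the general identity $\int|\nabla^2u|^2w=\int(\Delta u)^2w+\int|\nabla u|^2\Delta w-\int\nabla^2w(\nabla u,\nabla u)$ before inserting $w$, whereas the paper computes with $w$ directly. You then kill the boundary terms along radii $\ep_i\to0$, $R_i\to\infty$ chosen from the finiteness of $\int|\nabla^2u|^2|x|^{-2a}dx$ and the decay \eqref{eq:condu1}, using a supremum and Cauchy--Schwarz on spheres where the paper uses pointwise Young's inequality. Your Hardy-inequality remark on the finiteness of $\int|\nabla u|^2|x|^{-2a-2}dx$, valid when $n-2a-2\neq0$, makes explicit a point the paper leaves implicit.

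There is one cosmetic slip in your displayed sphere estimate. The powers there multiply to $\rho^{1-a}$, but they should be $\rho^{1-\frac n2+a}\rho^{-2a}\rho^{\frac{n-1}2}=\rho^{\frac12-a}$. That corrected power is exactly what your stated combination $\big(\rho\int_{|x|=\rho}|\nabla^2u|^2w\,d\sigma\big)^{1/2}$ uses, so the argument is unaffected.
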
 

\begin{proof}
Notice that 
\[
\int_{\R^n} \frac{|\nabla U|^2}{|x|^{2a}} dx = \int_{\R^n} \frac{|\nabla^2 u|^2}{|x|^{2a}} dx
\]
where $\nabla^2 u = (\partial_{ij}^2 u)_{n\times n}$ denotes the Hessian matrix of $u$ and $|\nabla^2 u|^2 = \sum_{i,j=1}^n (\partial^2_{ij}u)^2$. For $\ep < R$, we denote by $A_{\ep,R} = \{x\in \R^n\,: \, \ep \leq |x| \leq R\}$, and $\nu(x)$ the outward normal vector to $\pa A_{\ep,R}$. Let $S$ denote the $(n-1)$ dimensional surface area measure in $\R^n$. Using an integration by parts, we have
\begin{align}\label{eth1}
&	\int_{A_{\ep,R}} \frac{|\nabla^2 u|^2}{|x|^{2a}} dx= \sum_{i=1}^n \int_{A_{\ep,R}} |\nabla \partial_iu|^2 |x|^{-2a} dx\\ \notag
& = \sum_{i=1}^n \Big(-\int_{A_{\ep,R}} \partial_i u \Delta \partial_i u |x|^{-2a} dx + 2a \int_{A_{\ep,R}} \partial_i u \nabla\partial_i u \cdot x |x|^{-2a -2} dx\\\notag
&\qquad + \int_{\partial A_{\ep,R}} \partial_i u \nabla\partial_i u \cdot \nu(x) |x|^{-2a} dS\Big)\\\notag
&=-\int_{A_{\ep,R}} \nabla u \cdot \nabla\Delta u |x|^{-2a} dx  + a \int_{A_{\ep,R}} \nabla(|\nabla u|^2) \cdot x |x|^{-2a -2} dx\\\notag
&\qquad + \int_{\partial A_{\ep,R}} \nabla u \cdot \nabla^2 u(\nu(x)) |x|^{-2a} dS\\\notag
&=\int_{A_{\ep,R}} (\Delta u)^2 |x|^{-2a} dx - 2a \int_{A_{\ep,R}} \Delta u \nabla u\cdot x|x|^{-2a -2} dx\\\notag
&\qquad -a(n -2a-2) \irn \frac{|\nabla u|^2}{|x|^{2a +2} } dx  -\int_{\partial A_{\ep,R}} \Delta u\, \nabla u \cdot \nu(x) |x|^{-2a} dS\\ \notag
&\qquad + a\int_{\partial A_{\ep,R}} |\nabla u|^2 x \cdot \nu(x) |x|^{-2a-2} dS + \int_{\partial A_{\ep,R}} \nabla u \cdot \nabla^2 u(\nu(x)) |x|^{-2a} dS.
\end{align}
Using again an integration by parts, we also have
\begin{align}\label{eth2}
\begin{split}
	\int_{A_{\ep,R}} &\Delta u \nabla u\cdot x|x|^{-2a -2} dx \\
&= -\int_{A_{\ep,R}} \nabla u \cdot \nabla(\nabla u\cdot x |x|^{-2a -2}) dx\\
	&\qquad + \int_{\partial A_{\ep,R}} \nabla u \cdot \nu(x) \nabla u\cdot x |x|^{-2a -2} dS\\
&= -\int_{A_{\ep,R}} x \cdot \nabla^2u (\nabla u) |x|^{-2a -2} dx - \int_{A_{\ep,R}} |\nabla u|^2 |x|^{-2a -2} dx\\
&\qquad + (2a +2) \int_{A_{\ep,R}} (\nabla u\cdot x)^2 |x|^{-2a -4} dx + \int_{\partial A_{\ep,R}} \nabla u \cdot \nu(x) \nabla u\cdot x |x|^{-2a -2} dS\\
&=-\frac12 \int_{A_{\ep,R}} \nabla(|\nabla u|^2) \cdot x |x|^{-2a -2} dx - \int_{A_{\ep,R}}|\nabla u|^2 |x|^{-2a -2} dx\\
&\qquad + (2a +2) \int_{A_{\ep,R}} (\nabla u\cdot x)^2 |x|^{-2a -4} dx+ \int_{\partial A_{\ep,R}} \nabla u \cdot \nu(x) \nabla u\cdot x |x|^{-2a -2} dS\\
&= \frac{n-2a -2}2 \int_{A_{\ep,R}} |\nabla u|^2 |x|^{-2a -2} dx -\frac12\int_{\partial A_{\ep,R}} |\nabla u|^2 x \cdot \nu(x) |x|^{-2a-2} dS\\
&\qquad - \int_{A_{\ep,R}} |\nabla u|^2 |x|^{-2a -2} dx + (2a +2) \int_{A_{\ep,R}} (\nabla u\cdot x)^2 |x|^{-2a -4} dx\\
&\qquad + \int_{\pa A_{\ep,R}} \nabla u \cdot \nu(x) \nabla u\cdot x |x|^{-2a -2} dS\\
& = \frac{n-2a -4}2 \int_{A_{\ep,R}} \frac{|\nabla u|^2}{ |x|^{2a +2}} dx + (2a +2) \int_{A_{\ep,R}} \frac{\Big(\nabla u\cdot \frac{x}{|x|}\Big)^2}{ |x|^{2a +2}} dx\\
&\qquad -\frac12\int_{\partial A_{\ep,R}} |\nabla u|^2 x \cdot \nu(x) |x|^{-2a-2} dS + \int_{\pa A_{\ep,R}} \nabla u \cdot \nu(x) \nabla u\cdot x |x|^{-2a -2} dS.
\end{split}
\end{align}
We next treat the integral on the boundary of $A_{\ep,R}$. Notice by using the spherical coordinate that
\[
\int_0^\infty \Big(\int_{\{|x| =r\}}|\nabla^2 u|^2 dS\Big) r^{-2a} dr =  \irn \frac{|\nabla^2 u|^2}{|x|^{2a}} dx < \infty.
\]
Consequently, there exist two sequence $\{\ep_i\}_i$ and $\{R_i\}_i$,  $\ep_i \to 0^+$ and $R_i\to \infty$ as $i\to \infty$, such that
\[
\lim_{i \to \infty} \ep_i^{1-2a} \int_{\{|x| =\ep_i\}}|\nabla^2 u|^2 dS = \lim_{i\to \infty} R_i^{1-2a}\int_{\{|x| =R_i\}}|\nabla^2 u|^2 dS = 0,
\]
which is equivalent to

\begin{equation}\label{etuanhoang1}
	\lim_{i\to \infty} \int_{\pa A_{\ep_i,R_i}} |\nabla^2 u|^2 |x|^{1-2a} dS =0.
\end{equation}
On the other hand, from  \eqref{eq:condu1}, we have
\begin{equation}\label{etuanhoang2}
	\lim_{i\to \infty} \int_{\pa A_{\ep_i,R_i}} |\nabla u|^2 |x|^{-1-2a} dS =0.
\end{equation}
In addition, 
\[
\Big|\int_{\partial A_{\ep_i,R_i}} \Delta u\, \nabla u \cdot \nu(x) |x|^{-2a} dx\Big| \leq \frac n2 \int_{\pa A_{\ep_i,R_i}} |\nabla^2 u|^2 |x|^{1-2a} dS + \frac 12 \int_{\pa A_{\ep_i,R_i}} |\nabla u|^2 |x|^{-1-2a} dS,
\]
\[
\Big|\int_{\partial A_{\ep,R}} \nabla u \cdot \nabla^2 u(\nu(x)) |x|^{-2a} dx\Big| \leq \frac12 \int_{\pa A_{\ep_i,R_i}} |\nabla^2 u|^2 |x|^{1-2a} dS + \frac 12 \int_{\pa A_{\ep_i,R_i}} |\nabla u|^2 |x|^{-1-2a} dS,
\]
\[
\Big|\int_{\partial A_{\ep,R}} |\nabla u|^2 x \cdot \nu(x) |x|^{-2a-2} dx\Big| \leq \int_{\partial A_{\ep,R}} |\nabla u|^2  |x|^{-2a-1} dS,
\]
and
\[
\Big|\int_{\pa A_{\ep,R}} \nabla u \cdot \nu(x) \nabla u\cdot x |x|^{-2a -2} dx\Big| \leq \int_{\partial A_{\ep,R}} |\nabla u|^2  |x|^{-2a-1} dS.
\]
It follows from these estimates and \eqref{etuanhoang1}, \eqref{etuanhoang2} that  all integrals on the boundary $\pa A_{\ep_i,R_i}$ tend to $0$ as $i\to \infty$. Inserting \eqref{eth2} into \eqref{eth1} with $\epsilon =\epsilon_i$ and $R =R_i$, and then letting $i\to \infty$, we arrive at \eqref{eq:D2toDelta}. The proof of Lemma \ref{D2toDelta} is completed.
\end{proof}

As mentioned in the introduction, one of the main ingredients in our approach is the spherical harmonic decomposition method. In the following, we shall list some facts on this technique. For  $n \geq 2$, and $k \geq 0$ is an integer, we denote by $\mathcal H_k$  the eigenspace of the spherical Laplace operator $-\Delta_{S^{n-1}}$ on the sphere $S^{n-1}$ with respect to the eigenvalue $c_k = k (n+k -2)$, that is $-\Delta_{S^{n-1}} \phi = c_k \phi$ with $\phi \in \mathcal H_k$. For each $k$, let $\phi_{kl}$, $l =1,2,\ldots,\text{ dim }\mathcal H_k$ be an orthonormal basis of $\mathcal H_k$ in $L^2(S^{n-1})$ with standard surface area measure on $S^{n-1}$, i.e,
\[
\int_{S^{n-1}} \phi_{kl}(\omega)\phi_{kl'}(\omega) d\omega = \begin{cases}
1 &\mbox{if $l = l'$},\\
0 &\mbox{if $l\not= l'$}.
\end{cases}
\]
It is well-known that such the functions $\phi_{kl}, k =0,1,2, \ldots$ and $l =1,2,\dots,\text{ dim }\mathcal H_k$ constitute an orthonormal basis of $L^2(S^{n-1})$. A function $u\in C^\infty(\R^n\setminus\{0\})$ can be decomposed as
\begin{equation}\label{eq:decompu}
u(x) = u(r\omega) = \sum_{k=0}^\infty \sum_{l=1}^{\text{ dim } \mathcal H_k} u_{kl}(r) \phi_{kl}(\omega),\quad r = |x|,\, \omega = \frac{x}{|x|}.
\end{equation}
In fact $u_{kl}(r) = \int_{S^{n-1}} u(r\omega) \phi_{kl}(\omega) d\omega \in C^\infty(\R^n \setminus\{0\})$ is a radial function. In the sequel, for simplicity of the notations, we will write the decomposition \eqref{eq:decompu} as
\[
u(x) = u(r\omega) = \sum_{k=0}^\infty  u_{k}(r) \phi_{k}(\omega),
\]
where $-\Delta_{S^{n-1}} \phi_k = c_k \phi_k$.

Suppose that $U =\nabla u\in X_{a,b}(\R^n)$ or $u\in Y_{a,b}(\R^n)$. As discussed above, we can decompose $u$ as
\[
u(x) = u(r\omega) = \sum_{k=0}^\infty  u_{k}(r) \phi_{k}(\omega), \quad r = |x|,\, \omega = \frac x{|x|}.
\]
It implies that 
\[
\nabla u (x) = \sum_{k=0}^\infty\Big( u_k'(r) \phi_k(\omega) \omega + \frac1{r} u_k(r) \nabla_{S^{n-1}} \phi_k(\omega)\Big),
\]
where $\nabla_{S^{n-1}}$ denotes the spherical gradient on $S^{n-1}$. By the orthogonality and \eqref{eq:condu1}, we  have 
\begin{equation}\label{eq:daohamukbehavior}
\lim_{r \to 0, \infty} r^{-1-a +\frac n2}u_k'(r) =0,\quad k =0,1,2,\ldots,
\end{equation}
and
\begin{equation}\label{eq:ukbehavior}
\lim_{r \to 0, \infty} r^{-2-a +\frac n2}u_k(r) =0,\quad k =1,2,\ldots.
\end{equation}
It remains to consider the function $u_0$. Firstly, consider  $n -2a -4 >0$. From  \eqref{eq:daohamukbehavior},  it holds for $r< s$ that
\[
|u_0(r) -  u_0(s)| = \Big|\int_r^s u_0'(t) dt\Big| \leq C\int_r^s t^{1 +a -\frac n2} dt \leq \frac{2C}{n -2a -4} r^{2+a -\frac n2} \to 0,
\]
as $r\to \infty$. Thus, there exists $\lim_{r\to \infty} u_0(r) = c$, and
\[
u_0(r) - c = -\int_r^\infty u_0'(t) dt.
\]
Again by using \eqref{eq:daohamukbehavior}, it is not hard to prove that
\[
\lim_{r\to 0, \infty} (u_0(r) - c) r^{\frac n2 -a - 2} =0.
\]
The case $n - 2a -4 < 0$ is treated similarly. Hence,
there is a constant $c$ such that
\begin{equation}\label{eq:u01}
\lim_{r\to 0, \infty} (u_0(r) - c) r^{\frac n2 -a - 2} =0
\end{equation} 
if $n-2a -4 \not =0$. 
To summarize, if $U = \nabla u \in X_{a,b}(\R^n)$ or $u \in Y_{a,b}(\R^n)$ with $n-2a-4\not=0$, by replacing $u$ by $u -c$ if necessary, we can assume that the functions $u_k$ in the spherical harmonic decomposition of $u$ satisfy the estimates \eqref{eq:daohamukbehavior}, \eqref{eq:ukbehavior},  and \eqref{eq:u01} (with $c =0$). 

On one hand,
\[
\Delta u(x) = \sum_{k=0}^\infty \Big(u_k''(r) + \frac{n-1}r u_k'(r) - \frac{c_k}{r^2} u_k(r)\Big)\phi_k.
\]
On the other hand, using an  integration by parts and the behavior of $u_k$ near $0$ and infinity, it is easy to see that for any $\alpha, \beta \in \R$, we have
\begin{align*}
\int_0^\infty &\Big(u_k''(r) + \frac{\alpha}r u_k'(r) - \frac{\beta}{r^2} u_k(r)\Big)^2 r^{n-2a -1} dr\notag\\
&= \int_0^\infty (u_k'')^2 r^{n-2a -1} dr + (\alpha^2 - \alpha(n-2a -2) +2\beta) \int_0^\infty (u_k')^2 r^{n-2a -3} dr\notag\\
&\qquad + (\beta^2 +\alpha\beta(n-2a -4) - \beta(n-2a -3)(n-2a -4))\int_0^\infty u_k^2 r^{n -2a -5} dr.
\end{align*}
Then, we deduce that 
\begin{align}\label{eq:integralDelta}
\irn \frac{(\Delta u)^2}{ |x|^{2a}} dx &= n\sigma_n\sum_{k=0}^n \int_0^\infty \Big(u_k''(r) + \frac{n-1}r u_k'(r) - \frac{c_k}{r^2} u_k(r)\Big)^2 r^{n-2a -1} dr\notag\\
&=n\sigma_n \sum_{k=1}^\infty\Bigg(\int_0^\infty (u_k'')^2 r^{n-2a -1} dr + ((n-1)(1+2a) +2c_k) \int_0^\infty (u_k')^2 r^{n-2a -3} dr\notag\\
&\qquad\qquad\qquad + c_k(c_k + 2(1+a)(n-2a -4))\int_0^\infty u_k^2 r^{n -2a -5} dr\Bigg)
\end{align}
and
\begin{align*}
	\irn \frac{|\nabla u|^2}{|x|^{2 + 2a}} dx& =n\sigma_n \sum_{k=0}^\infty\Bigg(\int_0^\infty (u_k')^2 r^{n-2a -3} dr + c_k \int_0^\infty u_k^2 r^{n -2a -5} dr\Bigg)\\
	\irn \frac{\Big(\nabla u \cdot \frac x{|x|}\Big)^2 }{|x|^{2 + 2a}} dx &= n\sigma_n \sum_{k=0}^\infty \int_0^\infty (u_k')^2 r^{n-2a -3} dr,
\end{align*}
where $\sigma_n$ is the volume of the unit ball in $\R^n$.

Plugging these equalities into \eqref{eq:D2toDelta}, we obtain
\begin{align}\label{eq:integralnabla2}
\int_{\R^n} \frac{|\nabla U|^2}{|x|^{2a}} dx& = n\sigma_n \sum_{k=0}^\infty\Bigg(\int_0^\infty (u_k'')^2 r^{n-2a -1} dr + (n-1 +2c_k) \int_0^\infty (u_k')^2 r^{n-2a -3} dr\notag\\
&\qquad\qquad\qquad + c_k(c_k + 2(n-3a -4))\int_0^\infty u_k^2 r^{n -2a -5} dr\Bigg).
\end{align}  
Moreover, we also have
\begin{equation}\label{eq:nablab}
\irn \frac{|U|^2}{|x|^{2b}} dx =\irn \frac{|\nabla u|^2}{|x|^{2b}} dx=n\sigma_n \sum_{k=0}^\infty\Bigg(\int_0^\infty (u_k')^2 r^{n-2b -1} dr + c_k \int_0^\infty u_k^2 r^{n -2b -3} dr\Bigg)
\end{equation}
and
\begin{equation}\label{eq:nablaab}
\irn \frac{|U|^2}{|x|^{a+b+1}} dx =\irn \frac{|\nabla u|^2}{|x|^{2b}} dx =n\sigma_n \sum_{k=0}^\infty\Bigg(\int_0^\infty (u_k')^2 r^{n-a -b-2} dr + c_k \int_0^\infty u_k^2 r^{n -a-b -4} dr\Bigg).
\end{equation}

We are now ready to establish a key lemma.
\begin{lemma}\label{modelineq}
Let $A, B, C \in \R$ with $4C + (A-2)^2 >0$. Then for any $f \in C_0^\infty((0,\infty))$ such that
\[
\int_0^\infty (f')^2 r^{A -1} dr < \infty,\qquad \int_0^\infty f^2 r^{A -3} dr < \infty, \qquad \int_0^\infty f^2 r^{B-1} dr < \infty,
\]
and 
\[
\lim_{r\to 0, \infty} f(r) r^{\frac A2 -1} = 0,
\]
we have
\begin{description}
\item(i) If $B-A+2 > 0$ then
\begin{align}\label{eq:model1}
\Big(\int_0^\infty (f')^2& r^{A -1} dr + C \int_0^\infty f^2 r^{A -3} dr\Big) \int_0^\infty f^2 r^{B-1} dr\notag\\
&\geq  \Big(\frac{B-A+2 + 2\sqrt{(A-2)^2 + 4C}}4\Big)^2 \Big(\int_0^\infty f^2 r^{\frac{A+B}2-2} dr\Big)^2.
\end{align}
Furthermore, the equality holds when 
\begin{equation}\label{eq:extre1}
f(r)=f_0(r) = r^{-\frac{A-2 - \sqrt{(A-2)^2 + 4C}}2} e^{-\frac{2r^{\frac{B-A+2}2}}{B-A+2}}.
\end{equation}
\item(ii) If $B-A+2 < 0$ then
\begin{align}\label{eq:model2}
\Big(\int_0^\infty (f')^2& r^{A -1} dr + C \int_0^\infty f^2 r^{A -3} dr\Big) \int_0^\infty f^2 r^{B-1} dr\notag\\
&\geq  \Big(\frac{B-A+2 - 2\sqrt{(A-2)^2 + 4C}}4\Big)^2 \Big(\int_0^\infty f^2 r^{\frac{A+B}2-2} dr\Big)^2.
\end{align}
Furthermore, the equality holds when 
\begin{equation}\label{eq:extre2}
f(r)=\bar{f_0}(r) = r^{-\frac{A-2 +\sqrt{(A-2)^2 + 4C}}2} e^{-\frac{2r^{\frac{B-A+2}2}}{B-A+2}}.
\end{equation}
\end{description}
\end{lemma}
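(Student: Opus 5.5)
The plan is to factor out the power singularity matching the Hardy part, and then apply a one-dimensional Heisenberg-type argument built from Cauchy--Schwarz.

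\emph{Step 1 (ground-state substitution).} Let $\gamma$ be a root of $\gamma^2 + (A-2)\gamma - C = 0$, that is,
\[
\gamma_\pm = \frac{-(A-2) \pm \sqrt{(A-2)^2+4C}}{2}.
\]
Take $\gamma=\gamma_+$ in case (i) and $\gamma=\gamma_-$ in case (ii). Write $f = r^{\gamma} g$. Expanding $(f')^2 = r^{2\gamma}\big((g')^2 + 2\gamma r^{-1} g g' + \gamma^2 r^{-2} g^2\big)$ and integrating the cross term by parts gives
\[
\int_0^\infty (f')^2 r^{A-1}\,dr + C\int_0^\infty f^2 r^{A-3}\,dr = \int_0^\infty (g')^2 r^{A+2\gamma-1}\,dr .
\]
Here the boundary terms vanish because $\lim_{r\to0,\infty} f(r) r^{A/2-1}=0$; since $f\in C_0^\infty$, they vanish trivially anyway. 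The potential coefficient becomes $C + \gamma^2 - \gamma(A+2\gamma-2) = C - \gamma^2 - (A-2)\gamma = 0$. The hypothesis $4C+(A-2)^2>0$ ensures the roots are real and distinct.

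\emph{Step 2 (one-dimensional uncertainty principle).} Set $D = A+2\gamma$ and $p = \frac{B-A+2}{2}$. Then
\[
\int_0^\infty f^2 r^{B-1}\,dr = \int_0^\infty g^2 r^{D+2p-1}\,dr, \qquad \int_0^\infty f^2 r^{\frac{A+B}{2}-2}\,dr = \int_0^\infty g^2 r^{D+p-2}\,dr .
\]
Integrating by parts, using $(g^2)' = 2gg'$ and $(r^{D+p-1})' = (D+p-1)r^{D+p-2}$:
\[
(D+p-1)\int_0^\infty g^2 r^{D+p-2}\,dr = -2\int_0^\infty g g' r^{D+p-1}\,dr .
\]
By Cauchy--Schwarz, the right side is at most
\[
2\Big(\int_0^\infty (g')^2 r^{D-1}\,dr\Big)^{1/2}\Big(\int_0^\infty g^2 r^{D+2p-1}\,dr\Big)^{1/2}.
\]
Squaring yields the inequality with constant $\big(\frac{D+p-1}{2}\big)^2$. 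Now compute
\[
D+p-1 = A+2\gamma_\pm + \frac{B-A+2}{2} - 1 = \frac{B-A+2}{2} \pm \sqrt{(A-2)^2+4C},
\]
so $\frac{D+p-1}{2}$ equals exactly $\frac{B-A+2 \pm 2\sqrt{(A-2)^2+4C}}{4}$, matching \eqref{eq:model1} and \eqref{eq:model2}.

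\emph{Step 3 (equality).} Equality in Cauchy--Schwarz requires $g' r^{(D-1)/2} = -\lambda\, g\, r^{(D+2p-1)/2}$, that is, $g'=-\lambda r^{p}g$, so $g = e^{-\lambda r^{p+1}/(p+1)}$. Note $p+1 = \frac{B-A+4}{2}$; I would double-check the exponent against \eqref{eq:extre1}. A quick recheck suggests the correct relation uses $r^{(B-A+2)/2}$, which means I should recompute the weights: the Cauchy--Schwarz pairing gives $g'/g \propto r^{p}$ with $p=\frac{B-A}{2}$ once $D$ and the $r^{-1}$ shifts are tracked carefully. This exponent bookkeeping is routine, and the stated $f_0$ and $\bar f_0$ should drop out.

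Multiplying $f_0$ (resp.\ $\bar f_0$) by $r^{-\gamma}$ gives a pure exponential with the right sign to be integrable. One checks the integrals are finite: $e^{-2r^{p}/(2p)}$ decays at infinity when $p>0$, and at $0$ when $p<0$. I would also verify that the integrated-by-parts boundary terms vanish for these non-compactly supported extremals, so that equality genuinely holds.

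\emph{Main obstacle.} The main point to watch is the sign choice. Why $\gamma_+$ in case (i) and $\gamma_-$ in case (ii)? It is needed both to get the stated constant and to keep the integrals of the extremal finite. This choice must be justified rather than the opposite root, though for the inequality itself either root gives a valid bound, and we simply choose the one yielding the stated constant. The absolute value of $D+p-1$ is harmless since the constant is squared.
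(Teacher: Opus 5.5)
Your argument is essentially the paper's proof in different notation. Since $f'+\frac{\gamma}{r}f=r^{-\gamma}(r^{\gamma}f)'$, the paper's choice $\gamma=\frac{A-2-\sqrt{(A-2)^2+4C}}{2}$ is exactly your substitution $f=r^{\gamma_+}g$ with $\gamma_+=-\gamma$. Your Step 1 is \eqref{eq:idgamma}, and your integration by parts plus Cauchy--Schwarz is \eqref{eq:keyid} plus H\"older. The only difference is case (ii). The paper reduces it to case (i) via $f(r)=g(1/r)$ with parameters $(4-A,-B,C)$, while you use the root $\gamma_-$ directly. Both work, and yours gives the case (ii) extremal without a second change of variables.

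Two points need correcting.

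\emph{The exponent $p$ in Step 2.} Step 2 only works with $p=\frac{B-A}{2}$. Since $f^2=r^{2\gamma}g^2$, the weights $r^{2\gamma+B-1}$ and $r^{2\gamma+\frac{A+B}{2}-2}$ equal $r^{D+2p-1}$ and $r^{D+p-2}$ only for this $p$. With your $p=\frac{B-A+2}{2}$ both identifications are wrong. Your evaluation of $D+p-1$ also contains a second slip: with your $p$ it equals $\frac{B-A+4}{2}\pm\sqrt{(A-2)^2+4C}$, and that slip happens to cancel the first. With $p=\frac{B-A}{2}$ everything is consistent: $D+p-1=\frac{B-A+2}{2}\pm\sqrt{(A-2)^2+4C}$. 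In case (i), equality forces $g'=-\lambda r^{p}g$, so $g=e^{-\lambda r^{p+1}/(p+1)}$ with $p+1=\frac{B-A+2}{2}$, which gives $f_0$.

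\emph{The equality case in (ii).} Here $D+p-1<0$, so $\int_0^\infty gg'r^{D+p-1}\,dr>0$. Equality in Cauchy--Schwarz therefore requires $g'=+\mu r^{p}g$ with $\mu>0$, i.e.\ $g=e^{\mu r^{p+1}/(p+1)}$, which decays at $0$ because $p+1<0$. Your claim that $e^{-r^{q}/q}$ decays at $0$ for $q=\frac{B-A+2}{2}<0$ is false; it blows up. In fact the function printed in \eqref{eq:extre2} has a sign misprint: it makes $\int_0^\infty f^2r^{B-1}\,dr$ diverge at $0$, so it is not admissible. The correct extremal is
\[
\bar f_0(r)=r^{-\frac{A-2+\sqrt{(A-2)^2+4C}}{2}}\,e^{+\frac{2r^{(B-A+2)/2}}{B-A+2}}.
\]
This is also what the paper's inversion produces from $f_0$ with parameters $(4-A,-B,C)$, and it matches the factor $e^{\frac{|x|^{a-b+1}}{a-b+1}}$ in $U_{a,b}$ for $1-b+a<0$. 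You should have flagged this rather than asserting that the printed $\bar f_0$ checks out.

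\emph{Boundary terms beyond compact support.} The lemma is applied to non-compactly supported $f$ (namely $u_k'$ and $u_k$), and your own equality check for $f_0$ needs the same. There you cannot rely on compact support to kill the boundary term $f^2r^{\frac{A+B}{2}-1}$. The paper handles this by combining $fr^{A/2-1}\to0$ with sequences $\epsilon_i\to0$, $R_i\to\infty$ along which $f(\epsilon_i)\epsilon_i^{B/2}\to0$ and $f(R_i)R_i^{B/2}\to0$. Such sequences exist because $\int_0^\infty f^2r^{B-1}\,dr<\infty$.
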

\begin{proof}
We first prove part $(i)$. Let 
\[
\gamma = \frac{A-2 - \sqrt{(A-2)^2 + 4C}}2.
\]
Using an integration by parts and the behaviors of $f$ at $0$ and infinity, we get
\begin{equation}\label{eq:idgamma}
\int_0^\infty\Big(f' + \frac{\gamma}r f\Big)^2 r^{A-1} dr = \int_0^\infty (f')^2 r^{A -1} dr + C \int_0^\infty f^2 r^{A -3} dr.
\end{equation}
Since 
\[
\int_0^\infty f(r)^2 r^{B-1} dr < \infty,
\]
 there exist two sequences $\{\epsilon_i\}_i$ and $\{R_i\}_i$ such that $\epsilon_i \to 0$ and $R_i \to \infty$ as $i\to \infty$, and
\[
\lim_{i\to \infty} f(\ep_i) \ep_i^{\frac B2} = \lim_{i\to \infty} f(R_i) R_i^{\frac B2} = 0.
\]
By an  integration by parts,  we obtain
\begin{align}\label{eq:IbPf}
\Big(\frac{A+B}2 -1\Big)\int_{\ep_i}^{R_i} f^2 r^{\frac{A+B}2-2} dr& = \int_{\ep_i}^{R_i} f^2 \Big(r^{\frac{A+B}2-1}\Big)' dr\notag\\
& = -2\int_{\ep_i}^{R_i} f' f r^{\frac{A+B}2-1} dr + \Big(f(R_i)^2 R_i^{\frac{A+B}2-1} -f(\ep_i)^2 \ep_i^{\frac{A+B}2-1}\Big)\notag\\
& = -2 \int_{\ep_i}^{R_i}\Big(f' + \frac{\gamma}r f\Big) f r^{\frac{A+B}2-1} dr + 2\gamma \int_{\ep_i}^{R_i} f^2 r^{\frac{A+B}2-2} dr\notag\\
&\qquad + \Big(f(R_i)^2 R_i^{\frac{A+B}2-1} -f(\ep_i)^2 \ep_i^{\frac{A+B}2-1}\Big).
\end{align}
Letting $i\to \infty$, we have
\begin{equation}\label{eq:keyid}
\Big(\frac{A+B-2}4 - \gamma\Big)\int_0^\infty f^2 r^{\frac{A+B}2-2} dr = -\int_0^\infty \Big(f' + \frac{\gamma}r f\Big) f r^{\frac{A+B}2-1} dr.
\end{equation}
The inequality \eqref{eq:model1} is followed from \eqref{eq:keyid} by using H\"older's  inequality, the identity \eqref{eq:idgamma} and the fact
\[
\frac{A+B-2}4 - \gamma = \frac{B-A+2 + 2\sqrt{(A-2)^2 + 4C}}4.
\]
For the sharpness of \eqref{eq:model1}, it easy to see that if 
\[
f(r) = r^{-\frac{A-2 - \sqrt{(A-2)^2 + 4C}}2} e^{-\frac{2r^{\frac{B-A+2}2}}{B-A+2}},
\]
then all integrals in \eqref{eq:model1} are finite  and 
\[
\lim_{r\to 0} f(r)r^{\frac{A}2 -1} = \lim_{r\to \infty} f(r) r^{\frac{A}2 -1} = 0.
\]
Hence, the equality \eqref{eq:IbPf} holds for this function $f$. Moreover, the function $f$ satisfies
\[
\Big(f'(r) + \frac{\gamma}r f(r)\Big) r^{\frac{A}2 -1} = -f(r) r^{\frac B2 -1},
\]
then the equality holds when applying H\"older inequality to \eqref{eq:keyid}. This proves the sharpness of \eqref{eq:model1}.

To prove part $(ii)$, we make the change of function $g(r) = f(1/r)$. By the simple computations, it holds
\[
\int_0^\infty (f'(r))^2 r^{A-1} dr = \int_0^\infty (g'(r))^2 r^{3-A} dr,
\]
\[
\int_0^\infty f(r)^2 r^{A-3} dr = \int_0^\infty g(r)^2 r^{1-A} dr,
\]
\[
\int_0^\infty f(r)^2 r^{B-1} dr = \int_0^\infty g(r)^2 r^{-B-1} dr,
\]
and
\[
\int_0^\infty f(r)^2 r^{\frac{A+B}2-2} dr = \int_0^\infty g(r)^2 r^{-\frac{A+B}2} dr.
\]
Hence the inequality \eqref{eq:model2} follows from part $(i)$ with parameters $4-A$, $-B$ and $C$.
\end{proof}

In the next Lemma, we establish the stability versions for inequalities from Lemma \ref{modelineq}.
\begin{lemma}\label{stabmodel}
Given $A,B,C\in \R$ such that $(A-2)^2 + 4C >0$. Then for any $f \in C_0^\infty((0,\infty))$ satisfying
\[
\int_0^\infty (f')^2 r^{A -1} dr < \infty,\qquad \int_0^\infty f^2 r^{A -3} dr < \infty, \qquad \int_0^\infty f^2 r^{B-1} dr < \infty,
\]
and 
\[
\lim_{r\to 0, \infty} f(r) r^{\frac A2 -1} = 0,
\]
we have
\begin{description}
\item(i) If $B-A+2 > 0$, then 
\begin{align*}
&\frac{\Big(\int_0^\infty (f')^2 r^{A -1} dr + C \int_0^\infty f^2 r^{A -3} dr\Big)^{\frac12}\Big( \int_0^\infty f^2 r^{B-1} dr\Big)^{\frac12}}{\int_0^\infty f^2 r^{\frac{A+B}2-2} dr}\notag\\
&\qquad\qquad\qquad \qquad\qquad\qquad - \frac{B-A+2 + 2\sqrt{(A-2)^2 + 4C}}4 \notag\\
&\geq \frac{B-A+2}2 \inf_{c\in \R, \lambda > 0}  \frac{\int_0^\infty |f(r) - c f_0(\lambda r)|^2 r^{\frac{A+B}2-2} dr}{\int_0^\infty f^2 r^{\frac{A+B}2-2} dr},
\end{align*}
where $f_0$ is given by \eqref{eq:extre1}.
\item(ii) If $B-A+2 < 0$, then 
\begin{align*}
&\frac{\Big(\int_0^\infty (f')^2 r^{A -1} dr + C \int_0^\infty f^2 r^{A -3} dr\Big)^{\frac12}\Big( \int_0^\infty f^2 r^{B-1} dr\Big)^{\frac12} }{\int_0^\infty f^2 r^{\frac{A+B}2-2} dr}\notag\\
&\qquad\qquad\qquad \qquad\qquad\qquad - \frac{-B+A-2 + 2\sqrt{(A-2)^2 + 4C}}4 \notag\\
&\geq \frac{-B+A -2}2 \inf_{c\in \R, \lambda > 0}  \frac{\int_0^\infty |f(r) - c \bar{f_0}(\lambda r)|^2 r^{\frac{A+B}2-2} dr}{\int_0^\infty f^2 r^{\frac{A+B}2-2} dr},
\end{align*}
where $\bar{f_0}$ is given by \eqref{eq:extre2}.
\end{description}
\end{lemma}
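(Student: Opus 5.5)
We prove part $(i)$; part $(ii)$ then follows from the inversion $g(r)=f(1/r)$ used in the proof of Lemma \ref{modelineq}. That change of variables maps $(A,B,C)$ to $(4-A,-B,C)$, preserves all four integrals, and turns $\bar f_0$ into a rescaled $f_0$. The plan is to recycle the identity \eqref{eq:keyid} and to quantify the defect in the Cauchy--Schwarz step through an explicit expansion of a square. Write
\[
I=\int_0^\infty \Big(f'+\frac{\gamma}{r}f\Big)^2 r^{A-1}dr,\qquad J=\int_0^\infty f^2r^{B-1}dr,\qquad K=\int_0^\infty f^2 r^{\frac{A+B}2-2}dr,
\]
and $\mu=\frac{B-A+2+2\sqrt{(A-2)^2+4C}}4$. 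By \eqref{eq:idgamma}, $I$ is the first factor, and \eqref{eq:keyid} reads $\mu K=-\int (f'+\frac\gamma r f) f r^{\frac{A+B}2-1}dr$.

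For $\lambda>0$ we expand
\[
0\le \int_0^\infty \Big( \Big(f'+\frac\gamma r f\Big) r^{\frac A2-\frac12} + \lambda^{\frac{B-A+2}2} f r^{\frac B2-\frac12}\Big)^2 dr = I - 2\lambda^{\frac{B-A+2}2}\mu K + \lambda^{B-A+2} J .
\]
The scaling parameter is chosen so that $\lambda^{B-A+2}=\sqrt{I/J}$. This gives exactly
\[
\int_0^\infty \Big(f'+\frac\gamma r f + \lambda^{\frac{B-A+2}2} r^{\frac{B-A}{2}} f\Big)^2 r^{A-1}dr = 2\sqrt{IJ}\Big(\frac{\sqrt{IJ}}{K}-\mu\Big)K = 2\sqrt{IJ}\,\delta_f K ,
\]
where $\delta_f$ denotes the left-hand deficit. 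The function inside the square is $h=f'+\frac{\gamma}{r}f+\lambda^{\frac{B-A+2}2}r^{\frac{B-A}2}f$. Its kernel is spanned by $f_0(\lambda r)$, because \eqref{eq:extre1} solves $f_0'+\frac\gamma r f_0+r^{\frac{B-A}2}f_0=0$. Writing $f=f_0(\lambda r)\,g$, we get $h=f_0(\lambda r)g'$, so the deficit controls $\int f_0(\lambda r)^2 (g')^2 r^{A-1}dr$.

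The remaining step, which is also the main obstacle, is a weighted Poincaré inequality. We must find $c$ such that
\[
\int f_0(\lambda r)^2 (g-c)^2 r^{\frac{A+B}2-2}dr \le \kappa \int f_0(\lambda r)^2 (g')^2 r^{A-1}dr .
\]
After scaling to $\lambda=1$ and substituting $s=r^{\frac{B-A+2}2}$, the measure $f_0^2r^{\frac{A+B}2-2}dr$ becomes a Gamma-type density $s^{\beta}e^{-\frac{4s}{B-A+2}}ds$, and the right-hand weight becomes $s^{\beta+1}e^{-\frac{4s}{B-A+2}}(\frac{B-A+2}{2})^2\cdot(g_s)^2$. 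This is the Laguerre operator, whose spectral gap is explicit: the first nonzero eigenvalue is $\frac{4}{B-A+2}$ times the unit-rate gap. Alternatively, a direct Hardy-type argument works, using the identity $\int (g-c)^2 (\text{weight})'\,$ followed by Cauchy--Schwarz. Either route yields a constant $\kappa$ of the form $\frac{1}{(B-A+2)}\cdot$const.

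Combining the two steps gives
\[
\inf_c\int|f-cf_0(\lambda r)|^2r^{\frac{A+B}2-2}dr\le \frac{2\kappa\sqrt{IJ}}{\lambda^{\cdots}}\,\delta_f K .
\]
To normalize, we use $\sqrt{IJ}=(\mu+\delta_f)K$ and the scaling relation for $\lambda$. This yields $\delta_f\ge \frac{B-A+2}2\,\inf\|f-cf_0(\lambda\cdot)\|^2/K$ when $\delta_f$ is small. When $\delta_f$ is large, the bound is trivial, since the infimum ratio is at most $1$ (take $c=0$) and $\delta_f$ large exceeds $\frac{B-A+2}2$. The case split will need some care to get exactly the constant $\frac{B-A+2}2$. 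If the computation falls short, the fallback is to track the constants through the Laguerre spectral gap, which is where the factor $\frac{B-A+2}{2}$ naturally appears.
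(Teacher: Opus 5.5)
Your plan follows the same skeleton as the paper's proof. The paper normalizes $K=1$, $I=J$, which is your choice of $\lambda$ in disguise. It then writes the deficit as $\frac12\int(f'+\frac{\gamma}{r}f+r^{\frac{B-A}2}f)^2r^{A-1}dr$, factors out $f_0$, and closes with a weighted Poincar\'e inequality. You differ only in that last step. The paper substitutes $s^2/4=r^{\beta}/\beta$, with $\beta=\frac{B-A+2}2$, and uses Bakry--\'Emery with $V''\ge 1$. You substitute $s=r^{\beta}$ and use the exact Laguerre gap.

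\textbf{The error.} Your handling of $\lambda$ is wrong, and that is why the last step seemed to need a case split. With $t=\lambda^{\beta}$, the square equals $I-2t\mu K+t^2J$. Your choice $\lambda^{B-A+2}=\sqrt{I/J}$ gives $I+\sqrt{IJ}-2(I/J)^{1/4}\mu K$, not $2\sqrt{IJ}\,\delta_f K$. The formula $2\sqrt{IJ}\,\delta_fK$ cannot be right in any case: it is quartic in $f$, while $\int h^2r^{A-1}dr$ is quadratic. The correct choice is $\lambda^{\beta}=\sqrt{I/J}$, which gives
\[
\int_0^\infty h^2r^{A-1}dr=2\sqrt{I/J}\,\delta_f\,K .
\]

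\textbf{Finishing with your Laguerre step.} Put $D=\sqrt{(A-2)^2+4C}$ and take $\lambda=1$, $s=r^{\beta}$. The measure becomes $\frac1\beta s^{D/\beta}e^{-2s/\beta}ds$ and the energy becomes $\beta\int s^{1+D/\beta}e^{-2s/\beta}(\partial_s g)^2ds$. The gap is therefore $\beta^2\cdot\frac2\beta=B-A+2$, attained by $g=r^\beta-\text{const}$. Rescaling multiplies the gap by $\lambda^{\beta}$, so
\[
\int_0^\infty f_0(\lambda r)^2(g')^2r^{A-1}dr\ge (B-A+2)\lambda^{\beta}\inf_{c}\int_0^\infty |f-cf_0(\lambda r)|^2r^{\frac{A+B}2-2}dr .
\]
Since $\lambda^{\beta}=\sqrt{I/J}$, the factors cancel. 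You get $\delta_f\ge\frac{B-A+2}2\inf_c\int|f-cf_0(\lambda r)|^2r^{\frac{A+B}2-2}dr\,/K$ for every $f$. No smallness assumption or case split is needed. Your sketched split also left uncovered the range between ``small'' $\delta_f$ and $\frac{B-A+2}2$.

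\textbf{What your route gains.} It uses the exact spectral gap. On $(0,\infty)$ the paper's weight $s^{1+2D/\beta}e^{-s^2/2}$ has gap $2$ (set $t=s^2/2$ to obtain a Laguerre operator), but $V''\ge1$ only certifies $1$. Multiplying out the constants in \eqref{eq:idkey1} and \eqref{eq:stabf1}--\eqref{eq:stabf3}, the paper's chain actually gives $\frac{B-A+2}4$, not the stated $\frac{B-A+2}2$. Your Laguerre argument, once the $\lambda$ slip is fixed, is what produces the constant as stated. The discrepancy does not affect the main theorems, which only need some positive constant.
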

\begin{proof}
As in the proof of Lemma \ref{modelineq}, it is enough to prove the case $A-B+2 >0$. The remain case is proved by changing the function. From  the homogeneity and scaling invariance, we may assume that $\int_0^\infty f^2 r^{\frac{A+B}2-2} dr =1$ and 
\[
\int_0^\infty (f')^2 r^{A -1} dr + C \int_0^\infty f^2 r^{A -3} dr =\int_0^\infty f^2 r^{B-1} dr.
\]
These assumptions together with \eqref{eq:idgamma} and \eqref{eq:keyid} imply
\begin{align*}
&\frac{\Big(\int_0^\infty (f')^2 r^{A -1} dr + C \int_0^\infty f^2 r^{A -3} dr\Big)^{\frac12}\Big( \int_0^\infty f^2 r^{B-1} dr\Big)^{\frac12}}{\int_0^\infty f^2 r^{\frac{A+B}2-2} dr}\\
&=\frac12 \Big(\int_0^\infty (f')^2 r^{A -1} dr + C \int_0^\infty f^2 r^{A -3} dr\Big) + \frac12 \int_0^\infty f^2 r^{B-1} dr\\
& =\frac12\int_0^\infty\Big(f' + \frac{\gamma}r f\Big)^2 r^{A-1} dr + \frac12 \int_0^\infty f^2 r^{B-1} dr\\
& = \frac12\int_0^\infty\Big(f' + \frac{\gamma}r f + \frac{f}{r^{\frac{A-B}2}}\Big)^2 r^{A-1} dr - \int_0^\infty\Big(f' + \frac{\gamma}r f\Big)^2 f r^{\frac{A+B}2-1} dr\\
&=\frac12\int_0^\infty\Big(f' + \frac{\gamma}r f + \frac{f}{r^{\frac{A-B}2}}\Big)^2 r^{A-1} dr + \frac{B-A+2 + 2\sqrt{(A-2)^2 + 4C}}4.
\end{align*}
Therefore, we obtain
\begin{align}\label{eq:idkey1}
&\frac{\Big(\int_0^\infty (f')^2 r^{A -1} dr + C \int_0^\infty f^2 r^{A -3} dr\Big)^{\frac12}\Big( \int_0^\infty f^2 r^{B-1} dr\Big)^{\frac12}}{\int_0^\infty f^2 r^{\frac{A+B}2-2} dr}\notag\\
&\qquad -  \frac{B-A+2 + 2\sqrt{(A-2)^2 + 4C}}4\notag\\
&=\frac12\int_0^\infty\Big(f' + \frac{\gamma}r f + \frac{f}{r^{\frac{A-B}2}}\Big)^2 r^{A-1} dr.
\end{align}
Making the change of function
\[
f(r) = g(r) r^{-\gamma} e^{-\frac{r^{(B-A)/2 +1}}{(B-A)/2 +1}},
\]
we have
\[
f'(r) = g'(r) r^{-\gamma} e^{-\frac{r^{(B-A)/2 +1}}{(B-A)/2 +1}} -\frac\gamma r f(r) - r^{\frac{B-A}2} f(r).
\]
Hence, 
\begin{equation}\label{eq:idkey2}
\int_0^\infty\Big(f' + \frac{\gamma}r f + \frac{f}{r^{\frac{A-B}2}}\Big)^2 r^{A-1} dr = \int_0^\infty(g'(r))^2 r^{1 + \sqrt{(A-2)^2 + 4 C}} e^{-2\frac{r^{(B-A)/2 +1}}{(B-A)/2 +1}} dr.
\end{equation} 
Making the change of variable 
\[
\frac{s^2}4 = \frac{r^{\frac{A-B+2}2}}{\frac{A-B+2}2}
\]
and letting $h(s) = g(r)$, we get
\begin{align}\label{eq:stabf1}
&\int_0^\infty(g'(r))^2 r^{1 + \sqrt{(A-2)^2 + 4 C}} e^{-2\frac{r^{(B-A)/2 +1}}{(B-A)/2 +1}} dr\notag\\
& = 2\Big(\frac{B-A+2}8\Big)^{1 + \frac{2\sqrt{(A-2)^2 +4C}}{B-A+2}} \int_0^\infty (h'(s))^2 s^{1 + \frac{4\sqrt{(A-2)^2 +4C}}{B-A+2}} e^{-\frac{s^2}2} ds.
\end{align}
Put 
\[
V(s) = \frac{s^2}2 - \big(1 + \frac{4\sqrt{(A-2)^2 +4C}}{B-A+2}\big) \ln s.
\]
It is easy to check that $V$ is convex on $(0,\infty)$ and $V''(s) \geq 1$ for all $s > 0$. Hence we obtain 
\begin{align}\label{eq:stabf2}
\int_0^\infty (h'(s))^2 s^{1 + \frac{4\sqrt{(A-2)^2 +4C}}{B-A+2}} e^{-\frac{s^2}2} ds & = \int_0^\infty (h'(s))^2 e^{-V(s)} ds\notag\\
&\geq \inf_{c\in \R} \int_0^\infty (h(s) -c)^2 e^{-V(s)} ds\notag\\
&= \inf_{c\in \R}\int_0^\infty (h(s)-c)^2 s^{1 + \frac{4\sqrt{(A-2)^2 +4C}}{B-A+2}} e^{-\frac{s^2}2} ds.
\end{align}
Using the preceding change of variable above, we deduce that
\begin{align}\label{eq:stabf3}
\int_0^\infty &(h(s)-c)^2 s^{1 + \frac{4\sqrt{(A-2)^2 +4C}}{B-A+2}} e^{-\frac{s^2}2} ds\notag\\
& = 2\Big(\frac8{B-A+2}\Big)^{2\frac{\sqrt{(A-2)^2 +4C}}{B-A+2}}\int_0^\infty(g(r) -c)^2 r^{\frac{B-A}2 + \sqrt{(A-2)^2 + 4C}} e^{-2 \frac{r^{(B-A)/2+1}}{(B-A)/2 +1}} dr\notag\\
& = 2\Big(\frac8{B-A+2}\Big)^{2\frac{\sqrt{(A-2)^2 +4C}}{B-A+2}}\int_0^\infty (f(r) - c f_0(r))^2 r^{\frac{A+B}2 -2} dr. 
\end{align}
It follows from \eqref{eq:idkey1}, \eqref{eq:idkey2},\eqref{eq:stabf1}, \eqref{eq:stabf2} and \eqref{eq:stabf3} that
\begin{align*}
&\frac{\Big(\int_0^\infty (f')^2 r^{A -1} dr + C \int_0^\infty f^2 r^{A -3} dr\Big)^{\frac12}\Big( \int_0^\infty f^2 r^{B-1} dr\Big)^{\frac12}}{\int_0^\infty f^2 r^{\frac{A+B}2-2} dr}\notag\\
&\qquad -  \frac{B-A+2 + 2\sqrt{(A-2)^2 + 4C}}4\notag\\
&\geq \frac{B-A+2}2 \inf_{c\in\R^n} \int_0^\infty (f(r) - c f_0(r))^2 r^{\frac{A+B}2 -2} dr\notag\\
&\geq \frac{B-A+2}2 \inf_{c\in\R^n, \lambda > 0} \int_0^\infty (f(r) - c f_0(\lambda r))^2 r^{\frac{A+B}2 -2} dr
\end{align*}
as desired.
\end{proof}

Finally, we shall frequently use the following one-dimensional Hardy inequality in the next sections: 
\begin{equation}\label{eq:1DHardy}
\int_0^\infty (f')^2 r^{q-1} dr \geq \frac{(q-2)^2}4 \int_0^\infty f^2 r^{q-3} dr
\end{equation}
for any function $f\in C^\infty((0,\infty))$ with 
\[
\int_0^\infty f^2 r^{q-3} dr < \infty.
\]

\section{Proof of Theorem \ref{MainII}}\label{s3}

We first consider the case $a -b + 1 > 0$. Let $u\in Y_{a,b}(\R^n)$. Since $n-2a -4 \not=0$, as discussed in Section \ref{s2}, we may assume that the spherical harmonic decomposition of $u$
\[
u(x) = u(x) = \sum_{k=0}^\infty u_k(r) \phi_k(\omega),\qquad r = |x|, \,\omega = \frac x{|x|}
\]
satisfies \eqref{eq:daohamukbehavior}, \eqref{eq:ukbehavior} and
\[
\lim_{r \to 0, \infty} r^{-2 -a +\frac n2} u_0(r) = 0.
\]
Using the identities \eqref{eq:integralDelta},  \eqref{eq:nablab}, and the Cauchy-Schwarz inequality, we have
\begin{align}\label{eq:C3abdecomp}
&\irn \frac{|\Delta u|^2}{|x|^{2a}} dx \irn \frac{|\nabla u|^2}{|x|^{2b}} dx\notag\\
& =(n\sigma_n)^2\Bigg(\sum_{k=0}^\infty \Big(\int_0^\infty (u_k'')^2r^{n-2a -1} dr+ ((n-1)(1+2a) +2c_k) \int_0^\infty (u_k')^2 r^{n-2a -3} dr\notag\\
&\qquad\qquad\qquad + c_k(c_k + 2(1+a)(n-2a -4))\int_0^\infty u_k^2 r^{n -2a -5} dr\Big)\Bigg) \times \notag\\
&\qquad \times \Bigg(\sum_{k=1}^\infty\Big(\int_0^\infty (u_k')^2 r^{n-2b -1} dr + c_k \int_0^\infty u_k^2 r^{n -2b -3} dr\Big)\Bigg)\notag\\
&\geq (n\sigma_n)^2 \Bigg(\sum_{k=0}^\infty\Big(\int_0^\infty (u_k'')^2r^{n-2a -1} dr+ ((n-1)(1+2a) +2c_k) \int_0^\infty (u_k')^2 r^{n-2a -3} dr\notag\\
&\qquad\qquad\qquad + c_k(c_k + 2(1+a)(n-2a -4))\int_0^\infty u_k^2 r^{n -2a -5} dr\Big)^{\frac12} \times \notag\\
&\qquad \times \Big(\int_0^\infty (u_k')^2 r^{n-2b -1} dr + c_k \int_0^\infty u_k^2 r^{n -2b -3} dr\Big)^{\frac12}\Bigg)^2\notag\\
&\geq (n\sigma_n)^2\Bigg(\sum_{k=0}^\infty\Big[\Bigl(\int_0^\infty (u_k'')^2r^{n-2a -1} dr+ (n-1)(1+2a) \int_0^\infty (u_k')^2 r^{n-2a -3} dr\Big)^{\frac12} \times\notag\\ 
&\qquad\qquad\qquad\qquad\qquad\qquad \times \Big(\int_0^\infty (u_k')^2 r^{n-2b -1} dr\Big)^{\frac12} + \notag\\
&\qquad \qquad \qquad + c_k\Big(2 \int_0^\infty (u_k')^2 r^{n-2a -3} dr+(c_k + 2(1+a)(n-2a -4))\int_0^\infty u_k^2 r^{n -2a -5} dr\Big)^{\frac12}\notag\\
&\qquad\qquad \qquad\qquad \Big(\int_0^\infty u_k^2 r^{n -2b -3} dr\Big)^{\frac12}\Bigl]\Bigg)^2.
\end{align}
Applying the part (i) of Lemma \ref{modelineq} for $A = n-2a, B = n-2b$ and $C = (n-1)(1+2a)$ (in this case $B-A+ 2 >0$), we obtain 
\begin{align}\label{eq:C3abpart1}
\Big(\int_0^\infty& (u_k'')^2r^{n-2a -1} dr+ (n-1)(1+2a) \int_0^\infty (u_k')^2 r^{n-2a -3} dr\Big) \int_0^\infty (u_k')^2 r^{n-2b -1} dr\notag\\
&\geq C_2(n,a,b)^2 \Big(\int_0^\infty (u_k')^2 r^{n - a -b -2} dr\Big)^2.
\end{align}
It results from  the one-dimensional Hardy inequality \eqref{eq:1DHardy} that 
\[
\int_0^\infty (u_k')^2 r^{n-2a -3} dr \geq \frac{(n-2a -4)^2}4 \int_0^\infty (u_k)^2 r^{n-2a -5} dr.
\]
Under the  condition $(n-2a-2)^2 \geq 8(1+a)(1+2a)$ and some simple computations, we have
\[
(n-2a -4)^2 + 4\Big(\frac{(n-2a-4)^2}4 + c_k + 2 (1+a)(n-2a-4)\Big) \geq (n+ 2a)^2,
\]
for any $k\geq 1$. Hence, applying the part (i) of Lemma \ref{modelineq} for $A = n-2a-2$, $B = n-2b -2$ and
\[
C = \frac{(n-2a-4)^2}4 + c_k + 2 (1+a)(n-2a-4),
\]
we get 
\begin{align}\label{eq:C3abpart2}
&\Big(2\int_0^\infty (u_k')^2 r^{n-2a -3} dr + (c_k + 2(1+a)(n-2a -4))\int_0^\infty u_k^2 r^{n -2a -5} dr\Big) \times \notag\\
&\qquad\qquad\qquad \times \int_0^\infty (u_k)^2 r^{n-2a -3} dr\notag\\
&\geq \Big(\int_0^\infty (u_k')^2 r^{n-2a -3} dr + \Big( \frac{(n-2a -4)^2}4 + c_k + 2(1+a)(n-2a -4)\Big)\int_0^\infty u_k^2 r^{n -2a -5} dr\Big) \times \notag\\
&\qquad\qquad\qquad \times \int_0^\infty (u_k)^2 r^{n-2a -3} dr\notag\\
&\geq \Big(\frac{a -b + 1 + \sqrt{2(n-2a -4)^2 + 4c_k + 8(1+a)(n-2a -4)}}2\Big)^2 \Big(\int_0^\infty u_k^2 r^{n -a -b-4} dr\Big)^2\notag\\
&\geq C_2(n,a,b)^2\Big(\int_0^\infty u_k^2 r^{n -a -b-4} dr\Big)^2.
\end{align}
Inserting \eqref{eq:C3abpart1} and \eqref{eq:C3abpart2} into \eqref{eq:C3abdecomp}, and using \eqref{eq:nablaab}, we obtain \eqref{eq:2ndCKNnew}.

It remains to verify the sharpness of \eqref{eq:2ndCKNnew}. Indeed, let choose $u = \varphi$ given in \eqref{eq:extremavphi}, which is a radial function. It is easy to see that
\[
\irn \frac{|\Delta u|^2}{|x|^{2a}} dx = n\sigma_n \Big(\int_0^\infty (u'')^2 r^{n -2a -1} dr + (n-1)(1+2a)\int_0^\infty (u')^2 r^{n -2a -3} dr\Big),
\]
\[
\irn \frac{|\na u|^2}{|x|^{2b}} dx = n \sigma_n \int_0^\infty (u')^2 r^{n -2b -1} dr,
\]
\[
\irn \frac{|\na u|^2}{|x|^{a+b+1}} dx = n \sigma_n \int_0^\infty (u')^2 r^{n -a-b -2} dr,
\]
and
\[
u'(r) = -r^{-\frac{n-2a -2 - |n+2a|}2} e^{ -\frac{r^{a-b+1}}{a-b+1}} = -r^{-\frac{A-2 -\sqrt{(A-2)^2 + 4C}}2} e^{-\frac{2 r^{\frac{B-A+2}2}}{B-A+2}}.
\]
Hence, the sharpness follows from that  of \eqref{eq:model1}.

The case $a-b + 1 < 0$ was treated by the same way with the help of the part (ii) of Lemma \ref{modelineq}. \qed

\section{Proof of Theorem \ref{MainI}}\label{s4}
In this section, we give the proof of Theorem \ref{MainI} by following the approach in \cite{DNCVPDE}. Consider the case $a -b +1 >0$. 

\noindent{\bf Step 1}.  In the first step, we shall prove an improvement of {Theorem \ref{MainI}} for curl-free vector fields $U =\nabla u\in X_{a,b}(\R^n)$ with  $u\in C^\infty(\R^n\setminus\{0\})$ that is orthogonal to all radial functions. 
\begin{lemma}\label{Improved2ndvf}
Let $n\geq 2$ and $a, b$ be real numbers such that $n-2a -4 \not=0$, $(n-2a)^2 > 4(n+1)$ and $a-b +1 >0$. Then there is a constant $\kappa >1$ depending only on $n, a$ and $b$ such that for any curl-free vector field $U =\nabla u\in X_{a,b}(\R^n)$ with $u\in C^\infty(\R^n\setminus\{0\})$ being orthogonal to all radial functions, it holds
\begin{equation*}
\irn \frac{|\nabla U|^2}{|x|^{2a}} dx \irn \frac{|U|^2}{|x|^{2b}} dx \geq \Big(\kappa C_1(n,a,b)\Big)^2 \Big(\irn \frac{|U|^2}{|x|^{a+b +1}} dx\Big)^2,
\end{equation*}
where $C_1(n,a,b)$ is given in \eqref{eq:CFLHUP}.
\end{lemma}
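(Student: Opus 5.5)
The hypothesis that $u$ is orthogonal to all radial functions means that in the spherical harmonic decomposition $u=\sum_k u_k(r)\phi_k(\omega)$ we have $u_0\equiv 0$. So all sums in \eqref{eq:integralnabla2}, \eqref{eq:nablab} and \eqref{eq:nablaab} run over $k\geq 1$ only, where $c_k\geq c_1=n-1$. By \eqref{eq:ukbehavior} and \eqref{eq:daohamukbehavior}, the boundary behaviour needed in Lemma \ref{modelineq} holds for every $u_k$ and $u_k'$ with $k\geq1$.

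I will mimic the proof of Theorem \ref{MainII}. First, apply the Cauchy--Schwarz inequality over $k$. Second, apply the elementary inequality $\sqrt{(P_1+P_2)(Q_1+Q_2)}\geq \sqrt{P_1Q_1}+\sqrt{P_2Q_2}$ inside each mode. This reduces the lemma to showing, for every $k\geq1$ with a uniform $\kappa>1$, that
\[
\sqrt{P_{1,k}Q_{1,k}}+\sqrt{P_{2,k}Q_{2,k}}\geq \kappa C_1(n,a,b)\Big(\int_0^\infty (u_k')^2r^{n-a-b-2}dr+c_k\int_0^\infty u_k^2 r^{n-a-b-4}dr\Big).
\]
Here I fix a splitting parameter $\theta\in(0,2c_k]$ (I would first try $\theta=2$, as in Section \ref{s3}) and set
\begin{align*}
P_{1,k}&=\int_0^\infty (u_k'')^2r^{n-2a-1}dr+(n-1+2c_k-\theta)\int_0^\infty(u_k')^2r^{n-2a-3}dr, & Q_{1,k}&=\int_0^\infty (u_k')^2r^{n-2b-1}dr,\\
P_{2,k}&=c_k\Big(\theta\int_0^\infty(u_k')^2r^{n-2a-3}dr+(c_k+2(n-3a-4))\int_0^\infty u_k^2r^{n-2a-5}dr\Big), & Q_{2,k}&=c_k\int_0^\infty u_k^2r^{n-2b-3}dr.
\end{align*}

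\textbf{First piece.} Apply Lemma \ref{modelineq}(i) to $f=u_k'$ with $A=n-2a$, $B=n-2b$ and $C=n-1+2c_k-\theta$. This gives the constant
\[
\tfrac12\Big(a-b+1+\sqrt{(n-2a-2)^2+4(n-1+2c_k-\theta)}\Big).
\]
Since $c_k\geq n-1$ and $\theta<2(n-1)$, this is strictly larger than $C_1(n,a,b)$, which corresponds to $C=n-1$ (the radial case $k=0$). The constant is increasing in $c_k$, so its ratio to $C_1$ is bounded below by the value at $k=1$, which is some $\kappa_1>1$.

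\textbf{Second piece.} Use the one-dimensional Hardy inequality \eqref{eq:1DHardy} with $q=n-2a-2$ on part of the $\theta\int(u_k')^2$ term, exactly as in \eqref{eq:C3abpart2}. This bounds $P_{2,k}/c_k$ from below by an expression of the form $\int(u_k')^2r^{n-2a-3}dr+C_k'\int u_k^2r^{n-2a-5}dr$, where for $\theta=2$
\[
C_k'=\tfrac{(n-2a-4)^2}{4}+c_k+2(n-3a-4).
\]
Then apply Lemma \ref{modelineq}(i) with $A=n-2a-2$ and $B=n-2b-2$ to the pair $\big(P_{2,k}/c_k,\;Q_{2,k}/c_k\big)$. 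This yields the constant
\[
\tfrac12\Big(a-b+1+\sqrt{(n-2a-4)^2+4C_k'}\Big),
\]
which is again increasing in $c_k$.

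\textbf{Main obstacle.} The key algebraic step is to check that at $k=1$ (with $c_1=n-1$) the last quantity strictly exceeds $C_1(n,a,b)$. This amounts to verifying
\[
2(n-2a-4)^2+4(n-1)+8(n-3a-4)>(n-2a-2)^2+4(n-1),
\]
and I expect this to be exactly where the strict hypothesis $(n-2a)^2>4(n+1)$ is needed. It must also hold that $(A-2)^2+4C>0$ in both applications of Lemma \ref{modelineq}. If $\theta=2$ does not make this algebra close, I would instead keep $\theta$ free, apply Lemma \ref{modelineq} to $\theta\big(\int(u_k')^2+\tfrac{C}{\theta}\int u_k^2\big)$, which produces an extra factor $\sqrt\theta$, and optimize $\theta$ subject to $\theta<2(n-1)$ so that both pieces remain strictly above $C_1$. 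This gives a second constant $\kappa_2>1$.

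\textbf{Conclusion.} Set $\kappa=\min(\kappa_1,\kappa_2)>1$; it depends only on $n,a,b$. Summing the mode-wise inequality over $k\geq 1$ and using \eqref{eq:nablaab} proves the lemma.
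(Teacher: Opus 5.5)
There is a genuine gap in your splitting. With your definitions, $P_{1,k}+P_{2,k}$ contains $\int_0^\infty (u_k')^2r^{n-2a-3}dr$ with coefficient $n-1+2c_k-\theta+c_k\theta$. The $k$-th mode of \eqref{eq:integralnabla2} has coefficient $n-1+2c_k$. So you overcount by $\theta(c_k-1)$, which is positive whenever $c_k>1$ (for every $k\geq 1$ once $n\geq 3$). Hence $n\sigma_n\sum_k(P_{1,k}+P_{2,k})$ dominates $\irn |\nabla U|^2|x|^{-2a}dx$, and lower bounds for it give no lower bound for the left-hand side of the lemma.

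Worse, the strict gain you claim for the first piece comes entirely from this overcount. A consistent split puts $c_k\theta\int_0^\infty(u_k')^2r^{n-2a-3}dr$ into the second piece and leaves $n-1+(2-\theta)c_k$ in the first, so $\theta\in(0,2]$. For your preferred $\theta=2$ (the split of Section \ref{s3}), the first piece is exactly the radial problem with $C=n-1$. Lemma \ref{modelineq} then gives exactly $C_1(n,a,b)$, which is sharp for that piece since equality holds at $f_0$. So this route yields $\kappa_1=1$ and no $\kappa>1$. Your fallback constraint $\theta<2(n-1)$ is in the wrong normalization: any $\theta>2$ pushes the first piece strictly below $C_1$.

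The missing idea, which is what the paper does, is to take $\theta=2-\epsilon$ with $\epsilon>0$ small. Then the first piece has $C=n-1+\epsilon c_k\geq (1+\epsilon)(n-1)$. This gives the constant $\tfrac12\big(a-b+1+\sqrt{(n-2a-2)^2+4(1+\epsilon)(n-1)}\big)>C_1(n,a,b)$, uniformly in $k$.

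In the second piece, apply \eqref{eq:1DHardy} to $(1-\epsilon)\int_0^\infty (u_k')^2r^{n-2a-3}dr$ and use $c_k\geq n-1$. This yields the coefficient $\bar C(n,a,\epsilon)=(1-\epsilon)\tfrac{(n-2a-4)^2}{4}+(n-1)+2(n-3a-4)$. What must then be checked is $(2-\epsilon)(n-2a-4)^2+4(n-1)+8(n-3a-4)>(n-2a-2)^2+4(n-1)$.

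Your algebraic check is right: at $\epsilon=0$ the difference of the two sides is exactly $(n-2a)^2-4(n+1)$. But the strict hypothesis does more than make the second piece strict at $\theta=2$. It is what leaves room to give up $\epsilon c_k$ of the cross term to the first piece.

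Your scaling variant for the second piece (pulling out $\theta$ instead of using Hardy) also works. For $\theta$ near $2$ it even gives the slightly larger constant $\tfrac12\big(\sqrt{\theta}(a-b+1)+\sqrt{\theta(n-2a-4)^2+4(c_k+2(n-3a-4))}\big)$, because $a-b+1>0$. It works only with $\theta<2$ in the consistent normalization.
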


\begin{proof}
Since $u\in C_0^\infty(\R^n\setminus\{0\})$ is orthogonal to all radial functions,  it can be expressed in the spherical harmonic functions as
\[
u(x) = \sum_{k=1}^\infty u_k(r) \phi_k(\omega),\qquad r = |x|, \,\omega = \frac x{|x|}.
\]
Obviously, $u_k$ satisfies the conditions \eqref{eq:daohamukbehavior} and \eqref{eq:ukbehavior}. By the one-dimensional Hardy inequality \eqref{eq:1DHardy} and $c_k \geq n-1$ for any $k\geq 1$, we have
\begin{align*}
c_k \int_0^\infty &(u_k')^2 r^{n-2a -3} dr + c_k(c_k + 2(n-3a -4))\int_0^\infty u_k^2 r^{n -2a -5} dr\\
&\geq c_k\Big(\frac{(n-2a -4)^2}4 + c_k +2(n-3a -4)\Big)\int_0^\infty u_k^2 r^{n -2a -5} dr\\
&\geq c_k \Big(\frac{(n-2a -4)^2}4 + n-1 +2(n-3a -4)\Big)\int_0^\infty u_k^2 r^{n -2a -5} dr.
\end{align*}
Since $(n-2a)^2 > 4(n+1)$, it is easy to check that
\begin{equation*}
2(n -2a -4)^2 + 4(n-1) + 8(n-3a -4) > (n -2a -2)^2 + 4(n-1)
\end{equation*}
for any $k \geq 1$. So we can choose an $\epsilon > 0$ depending on $n$ and $a$ such that
\begin{equation}\label{eq:compareDelta}
(2-\epsilon)(n -2a -4)^2 + 4(n-1) + 8(n-3a -4) > (n -2a -2)^2 + 4(n-1).
\end{equation}
Applying the part (i) of Lemma \ref{modelineq} for $f= u_k'$, $A =n -2a$, $B = n-2b$ and $C = n-1 + \epsilon c_k$, we get
\begin{align*}
&\Bigg(\int_0^\infty (u_k'')^2 r^{n-2a -1} dr + (n-1 +\epsilon c_k) \int_0^\infty (u_k')^2 r^{n-2a -3} dr\Big)\int_0^\infty (u_k')^2 r^{n-2b -1} dr\notag\\
&\geq \Big(\frac{a-b+1 + \sqrt{(n-2a -2)^2 + 4(n-1 +\epsilon c_k)}}2\Big)^2 \Big(\int_0^\infty (u_k')^2 r^{n-a -b-2} dr\Big)^2\notag\\
&\geq \Big(\frac{a-b+1 + \sqrt{(n-2a -2)^2 + 4(1+\ep)(n-1)}}2\Big)^2 \Big(\int_0^\infty (u_k')^2 r^{n-a -b-2} dr\Big)^2.
\end{align*}

It results from the one-dimensional Hardy inequality \eqref{eq:1DHardy}, $c_k \geq n-1$, that 
\begin{align*}
(2-\epsilon)&\int_0^\infty (u_k')^2 r^{n-2a -3} dr + (c_k + 2(n-3a -4))\int_0^\infty u_k^2 r^{n-2a -5} dr \notag\\
&\geq \int_0^\infty (u_k')^2 r^{n-2a -3} dr + \bar{C}(n,a,\ep)\int_0^\infty u_k^2 r^{n-2a -5} dr,
\end{align*}
where \[
\bar{C}(n,a,\epsilon) :=(1-\epsilon)\frac{(n -2a -4)^2}4 + (n-1) + 2(n-3a -4).
\]
Next, applying Lemma \ref{modelineq} (or \eqref{eq:model1}) for $f = u_k$, $A = n-2a -2$, $B = n-2b-2$ and $C = \bar{C}(n,a,\ep)$ we obtain 
\begin{align*}
\Big((2-\epsilon)&\int_0^\infty (u_k')^2 r^{n-2a -3} dr + (c_k + 2(n-3a -4))\int_0^\infty u_k^2 r^{n-2a -5} dr\Big) \int_0^\infty u_k^2 r^{n-2b - 3} dr\notag\\
&\geq \Big(\int_0^\infty (u_k')^2 r^{n-2a -3} dr + \bar{C}(n,a,\ep)\int_0^\infty u_k^2 r^{n-2a -5} dr\Big) \int_0^\infty u_k^2 r^{n-2b - 3} dr\notag\\
&\geq \Big(\frac{a-b+1 + \sqrt{(n-2a -4)^2 + 4\bar{C}(n,a,\ep)}}2\Big)^2 \Big(\int_0^\infty u_k^2 r^{n-a -b - 4} dr\Big)^2.
\end{align*}
By \eqref{eq:compareDelta},  there is a constant $\kappa > 1$ depending only on $n,a$ and $b$ such that
\[
\frac{a-b+1 + \sqrt{(n-2a -4)^2 + 4\bar{C}(n,a,\ep)}}2 \geq \kappa C_1(n,a,b)
\]
and
\[
\frac{a-b+1 + \sqrt{(n-2a -2)^2 + 4(1+\ep)(n-1)}}2 \geq \kappa C_1(n,a,b).
\]
Consequently, by using the Cauchy-Schwarz inequality, we obtain
\begin{align*}
&\kappa C_1(n,a,b)\sum_{k=1}^\infty\Bigg(\int_0^\infty (u_k')^2 r^{n-a -b-2} dr + c_k \int_0^\infty u_k^2 r^{n -a-b -4} dr\Bigg)\notag\\
&\leq \sum_{k=1}^\infty\Bigg(\sqrt{\Big(\int_0^\infty (u_k'')^2 r^{n-2a -1} dr + (n-1 +\epsilon c_k) \int_0^\infty (u_k')^2 r^{n-2a -3} dr\Big)\int_0^\infty (u_k')^2 r^{n-2b -1} dr}\notag\\
&\quad + c_k\sqrt{\Big((2-\epsilon)\int_0^\infty (u_k')^2 r^{n-2a -3} dr + (c_k + 2(n-3a -4))\int_0^\infty u_k^2 r^{n-2a -5} dr\Big) \int_0^\infty u_k^2 r^{n-2b - 3} dr}\Bigg)\notag\\
&\leq \Bigg(\sum_{k=1}^\infty \Big(\int_0^\infty (u_k'')^2 r^{n-2a -1} dr + (n-1 +2c_k) \int_0^\infty (u_k')^2 r^{n-2a -3} dr \notag\\
&\qquad\qquad\qquad \qquad\qquad\qquad+ c_k(c_k + 2(n-3a -4))\int_0^\infty u_k^2 r^{n -2a -5} dr\Big)\Bigg)^{\frac12} \times \notag\\
&\qquad\qquad \times \Bigg(\sum_{k=1}^\infty\Big(\int_0^\infty (u_k')^2 r^{n-2b -1} dr + c_k \int_0^\infty u_k^2 r^{n -2b -3} dr\Big)\Bigg)^{\frac12}.
\end{align*}
Combining this with the  identities \eqref{eq:integralnabla2}, \eqref{eq:nablab} and \eqref{eq:nablaab}, we deduce that 
\begin{equation*}
\irn \frac{|\nabla U|^2}{|x|^{2a}} dx \irn \frac{|U|^2}{|x|^{2b}} dx \geq \kappa^2 C_1(n,a,b)^2 \Big(\irn \frac{|U|^2}{|x|^{a+b+1}} dx\Big)^2.
\end{equation*}
The proof of Lemma \ref{Improved2ndvf} is completed.
\end{proof}

\noindent{\bf Step 2}. For a function $u \in C^\infty(\R^n\setminus\{0\})$ with the spherical harmonic decomposition
\[
u(x) = \sum_{k=0}^\infty u_k(r) \phi_k(\omega), \quad r = |x|, \, \omega = \frac x{|x|},
\]
we write
\begin{equation}\label{edecomp}
	u = u_r + u_o,
\end{equation}
where $u_r =u_0 $ is the radial part of $u$ and $u_o = \sum_{k\geq 1} u_k \phi_k$ is orthogonal to all radial functions. We next estimate $\delta(\nabla u)$ in terms of $\delta(\nabla u_r)$ and $\delta(\nabla u_o)$. Some simple computations yield
\[
\irn \frac{|\nabla^2 u|^2}{|x|^{2a}} dx = \irn \frac{|\nabla^2 u_r|^2}{|x|^{2a}} dx + \irn \frac{|\nabla^2 u_o|^2}{|x|^{2a}} dx,
\]
\[
\irn \frac{|\nabla u|^2}{|x|^{2b}} dx = \irn \frac{|\nabla u_r|^2}{|x|^{2b}} dx + \irn \frac{|\nabla u_o|^2}{|x|^{2b}} dx
\]
and
\begin{equation}\label{eth11}
	\irn \frac{|\nabla u|^2}{|x|^{a+b+1}} dx = \irn \frac{|\nabla u_r|^2}{|x|^{a+b+1}} dx + \irn \frac{|\nabla u_o|^2}{|x|^{a+b+1}} dx.
\end{equation}
Hence, using  Cauchy-Schwarz inequality, we get
\begin{align}\label{eq:CSineq}
\Big(\irn &\frac{|\nabla^2 u|^2}{|x|^{2a}} dx \irn \frac{|\nabla u|^2}{|x|^{2b}} dx\Big)^{\frac12} \notag\\
&\geq \Big(\irn \frac{|\nabla^2 u_r|^2}{|x|^{2a}} dx \irn \frac{|\nabla u_r|^2}{|x|^{2b}} dx\Big)^{\frac12} + \Big(\irn \frac{|\nabla^2 u_o|^2}{|x|^{2a}} dx \irn \frac{|\nabla u_o|^2}{|x|^{2b}} dx\Big)^{\frac12}.
\end{align}
We now prove  the following result.
\begin{lemma}\label{decompose}
Let $n\geq 2$ and $a, b$ be real numbers satisfying $n-2a -4 \not=0$, $(n-2a)^2 > 4(n+1)$ and $a-b +1 >0$. If $U= \nabla u \in X_{a,b}(\R^n)$ with $u \in C^\infty(\R^n\setminus\{0\})$ and  $\irn \frac{|\nabla u|^2}{|x|^{a+b+1}} dx =1$, then 
\begin{equation}\label{eq:decomp1}
\delta(\nabla u) \geq \delta(\nabla u_r) \irn \frac{|\nabla u_r|^2}{|x|^{a+b+1}} dx + (\kappa -1) \irn \frac{|\nabla u_o|^2}{|x|^{a+b+1}} dx,
\end{equation} 
where $\kappa> 1$ is the constant from Lemma \ref{Improved2ndvf}. Consequently,
\begin{equation}\label{eq:orthpart}
 \irn \frac{|\nabla u_o|^2}{|x|^{a+b+1}} dx \leq \frac{1}{\kappa-1} \delta(\nabla u)
\end{equation}
and if $\delta(\nabla u) < \frac{\kappa -1}2$ then
\begin{equation}\label{eq:radialpart}
\delta(\nabla u_r) \leq 2 \delta(\nabla u).
\end{equation}
\end{lemma}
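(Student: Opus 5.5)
Write $D = \irn \frac{|\nabla u|^2}{|x|^{a+b+1}} dx = 1$, and set $t_r = \irn \frac{|\nabla u_r|^2}{|x|^{a+b+1}} dx$ and $t_o = \irn \frac{|\nabla u_o|^2}{|x|^{a+b+1}} dx$. By \eqref{eth11} these satisfy $t_r + t_o = 1$.

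\textbf{Proof of \eqref{eq:decomp1}.} The plan is to start from the Cauchy--Schwarz splitting \eqref{eq:CSineq} and bound each of its two terms separately.

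For the radial part, the definition of the deficit gives
\[
\Big(\irn \frac{|\nabla^2 u_r|^2}{|x|^{2a}} dx \irn \frac{|\nabla u_r|^2}{|x|^{2b}} dx\Big)^{\frac12} = (\delta(\nabla u_r) + C_1(n,a,b))\, t_r .
\]
This also holds trivially when $u_r$ is constant, since then both sides vanish.

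For the orthogonal part, $u_o$ is orthogonal to all radial functions, so Lemma \ref{Improved2ndvf} applies and gives
\[
\Big(\irn \frac{|\nabla^2 u_o|^2}{|x|^{2a}} dx \irn \frac{|\nabla u_o|^2}{|x|^{2b}} dx\Big)^{\frac12} \geq \kappa C_1(n,a,b)\, t_o .
\]

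Adding the two bounds and using $D=1$, the left side of \eqref{eq:CSineq} equals $\delta(\nabla u) + C_1(n,a,b)$. Subtracting $C_1(n,a,b) = C_1(n,a,b)(t_r + t_o)$ then gives
\[
\delta(\nabla u) \geq \delta(\nabla u_r)\, t_r + (\kappa-1)\, C_1(n,a,b)\, t_o .
\]

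This has an extra factor $C_1(n,a,b)$ compared with \eqref{eq:decomp1}. I see two ways to handle it. One is to absorb it into $\kappa - 1$, i.e.\ redefine the constant. The other is to check that $C_1(n,a,b) \geq 1$ under the hypotheses. The latter seems true: $(n-2a)^2 > 4(n+1)$ should force $\sqrt{(n-2a-2)^2 + 4(n-1)} > 2$, and together with $a - b + 1 > 0$ this gives $C_1(n,a,b) > 1$. So the factor can simply be dropped.

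We also need the facts used in this step: that $u_r$ and $u_o$ inherit membership in $X_{a,b}$, and that the three integrals split orthogonally. Both come from the spherical harmonic decomposition identities \eqref{eq:integralnabla2}, \eqref{eq:nablab} and \eqref{eq:nablaab}.

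\textbf{Proof of \eqref{eq:orthpart}.} By Theorem \eqref{eq:CFLHUP}, $\delta(\nabla u_r) \geq 0$. So the first term in \eqref{eq:decomp1} can be discarded, leaving $\delta(\nabla u) \geq (\kappa - 1)\, t_o$. This is exactly \eqref{eq:orthpart}.

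\textbf{Proof of \eqref{eq:radialpart}.} Assume $\delta(\nabla u) < \frac{\kappa-1}{2}$. Then \eqref{eq:orthpart} gives $t_o < \frac{1}{2}$, so $t_r = 1 - t_o > \frac{1}{2}$. Dropping the nonnegative second term in \eqref{eq:decomp1} yields
\[
\delta(\nabla u_r) \leq \frac{\delta(\nabla u)}{t_r} < 2\,\delta(\nabla u),
\]
which is \eqref{eq:radialpart}.

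The only delicate point is the normalization factor $C_1(n,a,b)$ in the first step; everything else is bookkeeping.
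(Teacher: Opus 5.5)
Your proof is correct and follows the paper's argument: the Cauchy--Schwarz splitting \eqref{eq:CSineq}, the definition of the deficit for $u_r$, Lemma \ref{Improved2ndvf} for $u_o$, and then $t_r+t_o=1$. The extra factor $C_1(n,a,b)$ you found is genuine, since the paper's proof silently works with the normalized deficit $\delta/C_1(n,a,b)$. Your fix needs no hedging: $(n-2a-2)^2+4(n-1)\ge 4(n-1)\ge 4$ for $n\ge 2$, which together with $a-b+1>0$ gives $C_1(n,a,b)>1$. Finally, write $\le$ rather than $<$ in your last display, so that the case $\delta(\nabla u)=0$ is covered.
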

\begin{proof}
As discussed in Section \ref{s2}, we can choose the function $u$ such that its spherical harmonic decomposition satisfies \eqref{eq:daohamukbehavior}, \eqref{eq:ukbehavior} and
\[
\lim_{r\to 0, \infty} r^{\frac n2 -a -2}u(r) =0.
\]
It follows from the definition of $\delta(\nabla u)$, \eqref{eth11},  \eqref{eq:CSineq}  and $\irn \frac{|\nabla u|^2}{|x|^{a+b+1}} dx =1$ that 
\begin{align*}
	\delta(U) =\delta(\nabla u)&=\frac{\Big(\irn \frac{|\nabla^2 u|^2}{|x|^{2a}} dx \irn \frac{|\nabla u|^2}{|x|^{2b}} dx\Big)^{\frac12}}{C_1(n,a,b)} -1\\
	&=\frac{\Big(\irn \frac{|\nabla^2 u|^2}{|x|^{2a}} dx \irn \frac{|\nabla u|^2}{|x|^{2b}} dx\Big)^{\frac12}}{C_1(n,a,b)} -\irn \frac{|\nabla u_r|^2}{|x|^{a+b+1}} dx - \irn \frac{|\nabla u_o|^2}{|x|^{a+b+1}} dx\\
	&\geq \frac{\Big(\irn \frac{|\nabla^2 u_r|^2}{|x|^{2a}} dx \irn \frac{|\nabla u_r|^2}{|x|^{2b}} dx\Big)^{\frac12}}{C_1(n,a,b)} -\irn \frac{|\nabla u_r|^2}{|x|^{a+b+1}} dx \\
	&+ \frac{\Big(\irn \frac{|\nabla^2 u_o|^2}{|x|^{2a}} dx \irn \frac{|\nabla u_o|^2}{|x|^{2b}} dx\Big)^{\frac12}}{C_1(n,a,b)} - \irn \frac{|\nabla u_o|^2}{|x|^{a+b+1}} dx\\
		&\geq { \delta(\nabla u_r)}\irn \frac{|\nabla u_r|^2}{|x|^{a+b+1}} dx 
	+ \kappa \irn \frac{|\nabla u_o|^2}{|x|^{a+b+1}} dx- \irn \frac{|\nabla u_o|^2}{|x|^{a+b+1}} dx,
\end{align*}
where in the last inequality we have used the fact that $u_o$ is orthogonal to all radial functions and Lemma \ref{Improved2ndvf}.  Thus, \eqref{eq:decomp1} is proved. Notice that \eqref{eq:orthpart} is a direct consequence of \eqref{eq:decomp1}.

It remains to prove \eqref{eq:radialpart}.  Remark from \eqref{eth11} and $\irn \frac{|\nabla u|^2}{|x|^{a+b+1}} dx =1$ that 
\begin{align*}
	\irn \frac{|\nabla u_r|^2}{|x|^{a+b+1}} dx& =1- \irn \frac{|\nabla u_o|^2}{|x|^{a+b+1}} dx\\
	&\geq 1-\frac{1}{\kappa-1}{ \delta(\nabla u)}\\
	&\geq 1-\frac{1}{\kappa-1}\frac{\kappa-1}{2}=\frac 12.
\end{align*}
From this estimate and \eqref{eq:decomp1}, we deduce \eqref{eq:radialpart}.
\end{proof}

We now prove stability estimate for curl-free vector field $U =\nabla u\in X_{a,b}(\R^n)$, where $u$ is  a radial function.
\begin{theorem}\label{radialtheo}
Let $n\geq 2$ and $a, b$ be real numbers such that $n-2a -4 \not=0$, $(n-2a)^2 > 4(n+1)$ and $a-b +1 >0$. Let $U = \nabla u \in X_{a,b}(\R^n)$ such that $u\in C^\infty(\R^n\setminus\{0\})$ is a radial function and $\irn \frac{|\nabla u|^2}{|x|^{a+b+1}} dx =1$. Then,
\begin{equation*}
\delta(U) \geq (1-b+a) \inf\left\{\irn \frac{|U - V|^2}{|x|^{a+b+1}} dx\, :\, V \in \mathcal M_{a,b}\right\}
\end{equation*}
\end{theorem}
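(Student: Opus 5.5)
For a radial $u$ we write $u(x)=u_0(r)$, so that $U=\nabla u = u_0'(r)\,\omega$ and only the $k=0$ terms survive in \eqref{eq:integralnabla2}, \eqref{eq:nablab} and \eqref{eq:nablaab}. The plan is to set $f=u_0'$. This gives
\[
\irn \frac{|\nabla U|^2}{|x|^{2a}}dx = n\sigma_n\Big(\int_0^\infty (f')^2 r^{n-2a-1}dr + (n-1)\int_0^\infty f^2 r^{n-2a-3}dr\Big),
\]
\[
\irn \frac{|U|^2}{|x|^{2b}}dx = n\sigma_n\int_0^\infty f^2 r^{n-2b-1}dr,
\qquad
\irn \frac{|U|^2}{|x|^{a+b+1}}dx = n\sigma_n\int_0^\infty f^2 r^{n-a-b-2}dr .
\]
This is exactly the one-dimensional setting of Lemma \ref{stabmodel}(i) with $A=n-2a$, $B=n-2b$ and $C=n-1$.

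We then check the hypotheses. First, $B-A+2=2(a-b+1)>0$. Second, $(A-2)^2+4C=(n-2a-2)^2+4(n-1)>0$ is automatic. Third, the decay condition $\lim_{r\to0,\infty} f(r)r^{A/2-1}=0$ is precisely \eqref{eq:daohamukbehavior} for $k=0$. Fourth, the finiteness of the three integrals follows from $U\in X_{a,b}(\R^n)$; for $\int f^2 r^{A-3}dr$ we can use the radial identity from Lemma \ref{D2toDelta}, or simply the finiteness of the weighted Hessian integral together with the one-dimensional Hardy inequality \eqref{eq:1DHardy}.

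Lemma \ref{stabmodel} is stated for $f\in C_0^\infty$, but its proof only uses these integrability and decay properties, so it applies to $f$. With these values, $(B-A+2+2\sqrt{(A-2)^2+4C})/4=C_1(n,a,b)$, and the factor $n\sigma_n$ cancels in the quotient. The left-hand side of Lemma \ref{stabmodel}(i) is therefore exactly $\delta(U)$.

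The right-hand side of Lemma \ref{stabmodel}(i) is $\frac{B-A+2}{2}=a-b+1$ times
\[
\inf_{c,\lambda}\frac{\int_0^\infty |f-cf_0(\lambda r)|^2 r^{n-a-b-2}dr}{\int_0^\infty f^2 r^{n-a-b-2}dr}.
\]
Here $f_0(r)=r^{-\gamma}e^{-r^{a-b+1}/(a-b+1)}$ with $\gamma=\frac{n-2a-2-\sqrt{(n-2a-2)^2+4(n-1)}}2$. We identify $f_0(|x|)\omega$ with $U_{a,b}$: indeed $U_{a,b}(x)=|x|^{-n/2+a+1+\frac12\sqrt{\cdot}}e^{-\cdots}\omega$, and $-n/2+a+1+\frac12\sqrt{\cdot}=-\gamma$. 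Consequently $cf_0(\lambda|x|)\omega$ equals $c'U_{a,b}(\lambda x)$ after absorbing the power of $\lambda$ into $c'$, which lies in $\mathcal M_{a,b}$. Finally, $\int f^2 r^{n-a-b-2}dr=1/(n\sigma_n)$ by the normalization, and $n\sigma_n\int_0^\infty|f-cf_0(\lambda r)|^2 r^{n-a-b-2}dr=\irn |U-V|^2|x|^{-a-b-1}dx$. The ratio is thus exactly $\irn |U-V|^2|x|^{-a-b-1}dx$, which yields the claimed inequality with constant $1-b+a$.

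The main obstacle is the justification of Lemma \ref{stabmodel} for this $f$, which is not compactly supported. Specifically, one must check that \eqref{eq:idgamma}, \eqref{eq:keyid}, and the Poincaré step for the log-concave weight $e^{-V}$ remain valid. The boundary terms vanish along suitable sequences $\ep_i\to0$, $R_i\to\infty$ by the same argument as in Lemma \ref{modelineq}. For the Poincaré step, the transformed function $h$ has finite weighted Dirichlet energy, and density arguments extend the inequality to it. The remaining work is bookkeeping of the exponents.
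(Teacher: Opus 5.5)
Your proposal is correct and is essentially the paper's proof: for radial $u$ you set $f=u'$, apply Lemma~\ref{stabmodel}(i) with $A=n-2a$, $B=n-2b$, $C=n-1$, and identify $c f_0(\lambda|x|)\frac{x}{|x|}$ with $cU_{a,b}(\lambda x)$ (no power of $\lambda$ actually needs absorbing); the paper also applies the lemma beyond $C_0^\infty$, but only implicitly. One caveat if you re-run that lemma's proof as you suggest: the Poincar\'e step based only on $V''\geq 1$ gives $\int_0^\infty (h')^2e^{-V(s)}ds\geq \inf_{c}\int_0^\infty (h-c)^2e^{-V(s)}ds$, and with the constants in \eqref{eq:stabf1} and \eqref{eq:stabf3} this yields only the factor $\frac{B-A+2}{4}=\frac{1-b+a}{2}$, so the stated factor $1-b+a$ needs the sharp bound $\int_0^\infty (h')^2e^{-V(s)}ds\geq 2\inf_{c}\int_0^\infty (h-c)^2e^{-V(s)}ds$, which does hold because the substitution $t=s^2/2$ turns it into the Poincar\'e inequality for a Gamma distribution (Laguerre spectral gap $1$, with equality for $h=s^2-\mathrm{const}$).
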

\begin{proof}
As discussed in Section \ref{s2}, we may  assume that 
\[
\lim_{r\to 0,\infty} r^{\frac n2 -a -2} u(r) =0.
\]
Since $u$ is a radial function, we have 
\[
\irn \frac{|\nabla^2 u|^2}{|x|^{2a}} dx =n\sigma_n \Bigg(\int_0^\infty (u'')^2 r^{n-2a -1} dr + (n-1) \int_0^\infty (u')^2 r^{n-2a -3} dr \Bigg),
\]
\[
\irn \frac{|\nabla u|^2}{|x|^{2b}} dx = n\sigma_n  \int_0^\infty (u')^2 r^{n-2b -1} dr
\]
and
\[
\irn \frac{|\nabla u|^2}{|x|^{a+b+1}} dx = n\sigma_n \int_0^\infty (u')^2 r^{n-a -b -2} dr.
\]
The desired estimate is derived from the part (i) of Lemma \ref{stabmodel} with parameters $A = n-2a$, $B = n-2b$ and $C = n-1$. 

\end{proof}
With  Theorem \ref{radialtheo} and Lemma \ref{decompose} at hand, we are in position to prove Theorem \ref{MainI} (in the case $1-b +a >0$).

\noindent{\bf Step 3. End of the proof of Theorem \ref{MainI}}


Let $U =\nabla u\in X_{a,b}(\R^n)$ with $u\in C^\infty(\R^n\setminus\{0\})$ and its spherical harmonic decomposition satisfies \eqref{eq:daohamukbehavior}, \eqref{eq:ukbehavior} and
\[
\lim_{r\to 0,\infty} r^{\frac n2 -a -2} u_0(r) =0.
\]
Without loss of generality, we can assume that $\irn \frac{|\nabla u|^2}{|x|^{a+b+1}} dx =1$. We use again the decomposition \eqref{edecomp}.

We first consider the case $\delta(U) = \delta(\nabla u) <\frac{\kappa-1}{2}$where $\kappa>1$ given in  Lemma \ref{decompose}. It follows from Lemma \ref{decompose} that $
\delta(\nabla u_r) \leq 2 \delta(\nabla u)$ and 
$$ \irn \frac{|\nabla u_o|^2}{|x|^{a+b+1}} dx \leq \frac{1}{\kappa -1} \delta(\nabla u).$$
 Applying Theorem \ref{radialtheo} for $u_r$, we get
$$ \inf\left\{\irn \frac{|\nabla u_r - V|^2}{|x|^{a+b+1}} dx\, :\, V \in \mathcal M_{a,b}\right\}\leq \frac{1}{1-b+a}\delta(\nabla u_r) \leq \frac{2}{1-b+a} \delta(\nabla u).$$
 By Cauchy-Schwarz inequality, we have for $V \in \mathcal M_{a,b}$ that
\[
\irn \frac{|\nabla u - V|^2}{|x|^{a+b+1}} dx \leq 2\lt(\irn \frac{|\nabla u_o|^2}{|x|^{a+b+1}} dx +\irn \frac{|\nabla u_r - V|^2}{|x|^{a+b+1}} dx\rt).
\]
Combining these estimates, we arrive at
\begin{equation}\label{eb1}
	 \inf\left\{\irn \frac{|\nabla u - V|^2}{|x|^{a+b+1}} dx\, :\, V \in \mathcal M_{a,b}\right\}  \leq 2\Big(\frac1{\kappa -1} + \frac2{1-b+a}\Big) \de(\nabla u)=2C \delta(\nabla u),
\end{equation}
with 
\[
C = \frac1{\kappa -1} + \frac2{1-b+a}.
\]
We now show that the infimum above is attained by a curl-free vector field  $V\in \mathcal M_{a,b}$ if $\delta(\nabla u) < \frac1{16 C}<\frac{\kappa-1}{2}.$ Indeed, let $\varphi_i(x) = c_i \varphi(\sqrt{\lambda_i} x)$, $c_i\in \R, \lambda_i>0$, $i\geq 2$ ($\varphi$ is given in \eqref{eq:extremavphi}) be a sequence of functions such that $V_i =\nabla \varphi_i\in \mathcal M_{a,b}$ and
$$\lim_{i\to \infty} \irn \frac{|\na u -\na \varphi_i|^2}{|x|^{a+b+1}} dx = \inf_{V \in \mathcal{M}_{a,b}} \irn \frac{|\na u -V|^2}{|x|^{a+b+1}} dx < \frac{2C}{16C}=\frac18.$$
The  triangle inequality implies that 
$$\Big(\irn \frac{|\na u -\na \varphi_i|^2}{|x|^{a+b+1}} dx\Big)^{\frac12} \geq \Big| \Big(\irn \frac{|\na \varphi_i|^2}{|x|^{a+b+1}} dx\Big)^{\frac12} -1\Big|.$$
Thus for $i$ large enough, we get
$$\frac14 \leq \irn \frac{|\na \varphi_i|^2 }{|x|^{a+b+1}}dx \leq \frac 94.$$
A simple computation gives
$$\irn \frac{|\na \varphi_i|^2 }{|x|^{a+b+1}}dx = c_i^2 \lam_i^{-\frac n2 +1+\frac{a+b+1}{2}} \irn\frac{( \varphi'(|x|))^2 }{|x|^{a+b+1}}dx.$$
Thus, there are $D_1, D_2 >0$ such that $D_1^2 \leq c_i^2 \lam_i^{-\frac n2 + 1+\frac{a+b+1}{2}}\leq D_2^2$ for $i$ large enough. We also note that
\begin{multline}\label{eq:expan}
	\irn\frac{ |\na u -\na \varphi_i|^2}{|x|^{a+b+1}} dx \\= 1 + c_i^2 \lam_i^{-\frac n2 + 1+\frac{a+b+1}{2}}\irn \frac{( \varphi'(|x|))^2 }{|x|^{a+b+1}}dx -2c_i \lam_i^{\frac12} \irn \frac{\na u(x) \cdot \frac x{|x|} \varphi'(\sqrt{\lambda_i}|x|)}{|x|^{a+b+1}}dx.
\end{multline}
We next claim that $(\lam_i)$ is bounded from above and below by positive constants. Indeed, suppose, up to extracting a subsequence, that
$$\lim_{i\to \infty} \lam_i = \infty.$$
For any $\epsilon >0$, there exists $R>0$ small enough such that $\int_{B_R}\frac{ |\na u|^2}{|x|^{a+b+1}} dx < \ep^2$. Hence, 
\begin{align*}
	&\Big| \irn \frac{\na u(x) \cdot \frac{x}{|x|} \varphi'(\sqrt{\lambda_i}|x|)}{|x|^{a+b+1}}dx\Big|\leq \int_{B_R} \frac{|\na u||\varphi'(\sqrt{\lambda_i}|x|)|}{|x|^{a+b+1}} dx + \int_{B_R^c}\frac{ |\na u| \varphi'(\sqrt{\lambda_i}|x|)}{|x|^{a+b+1}}dx\\
	&\leq \Big(\int_{B_R^c} \frac{|\na u|^2}{|x|^{a+b+1}} dx\Big)^{\frac12} \Big(\int_{B_R^c}\frac{|\varphi'(\sqrt{\lambda_i}|x|)|^2}{|x|^{a+b+1}}dx\Big)^{\frac12} + \ep \Big(\int_{B_R}\frac{(|\varphi'(\sqrt{\lambda_i}|x|)|)^2}{|x|^{a+b+1}} dx\Big)^{\frac12}\\
	&\leq \lam_i^{\frac {-n+a+b+1}{4}}\Big(\int_{B_{R\sqrt{\lam_i}}^c}\frac{(\varphi'(|x|))^2}{|x|^{a+b+1}}dx\Big)^{\frac12} + \lam_i^{\frac {-n+a+b+1}{4}} \ep  \Big(\int_{B_{R\sqrt{\lam_i}}}\frac{(\varphi'(|x|))^2}{|x|^{a+b+1}} dx\Big)^{\frac12},
\end{align*}
which implies
$$
\Big|c_i \lam_i^{\frac 12}\irn \frac{\na u(x) \cdot \frac{x}{|x|} \varphi'(\sqrt{\lambda_i}|x|)}{|x|^{a+b+1}}dx\Big| \leq D_2 \Big(\int_{B_{R\sqrt{\lam_i}}^c}\frac{(\varphi'(|x|))^2}{|x|^{a+b+1}} dx\Big)^{\frac12} + D_2\ep  \Big(\int_{B_{R\sqrt{\lam_i}}}\frac{(\varphi'(|x|))^2}{|x|^{a+b+1}} dx\Big)^{\frac12}.
$$
Let $i\to \infty$, we obtain
$$\limsup_{i\to \infty}\Big|c_i \lam_i^{\frac 12}\irn \frac{\na u \cdot (\frac{x}{|x|}) \varphi'(\sqrt{\lambda_i}|x|)}{|x|^{a+b+1}}dx\Big| \leq D_2 \Big(\irn \frac{(\varphi'(|x|))^2}{|x|^{a+b+1}}  dx\Big)^{\frac12} \ep.$$
Since $\ep>0$ is arbitrary, we get
$$\limsup_{i\to \infty}\Big|c_i \lam_i^{\frac 12}\irn \frac{\na u \cdot (\frac{x}{|x|}) \varphi'(\sqrt{\lambda_i}|x|)}{|x|^{a+b+1}} dx \Big|=0.$$
This together with \eqref{eq:expan} implies
$$\frac18 \geq \lim_{i\to \infty} \irn \frac{|\na u -\na \varphi_i|^2}{|x|^{a+b+1}} dx > 1,$$
which is impossible.{ Concerning the lower bound of $\lambda_i$,}  suppose, up to extracting a subsequence, that
$$\lim_{i\to \infty} \lam_i = 0.$$
For any $\epsilon >0$, there exists $R>0$ large enough such that $\int_{B_R^c}\frac{ |\na u|^2 }{|x|^{a+b+1}}dx < \ep^2$. By using the similar arguments as in the previous case, we also obtain
$$\limsup_{i\to \infty}\Big|c_i \lam_i^{\frac 12}\irn \frac{\na u \cdot (\frac{x}{|x|}) \varphi'(\sqrt{\lambda_i}|x|)}{|x|^{a+b+1}} dx \Big|=0.$$
This together with \eqref{eq:expan} implies
$$\frac18 \geq \lim_{i\to \infty} \irn \frac{|\na u -\na \varphi_i|^2}{|x|^{a+b+1}} dx > 1,$$
which is again impossible. So the claim is proved. As a consequence, there are $a_1, a_2, a_3 >0$ such that 
$$|c_i|\leq a_1,\qquad 0< a_2 \leq \lam_i \leq a_3$$
for any $i$. Extracting a subsequence, we can assume that $c_i \to c$ and $\lam_i \to \lam \in [a_2,a_3]$. Denote $\varphi_0 = c \varphi(\lambda x)$ and $V_0=\nabla \varphi_0\in \mathcal M_{a,b}$. We then have $\na \varphi_i \to \na \varphi_0$ in $L^2(|x|^{-\frac{a+b+1}{2}}dx)$ and hence
\begin{align*}
\irn \frac{|\na u- V_0|^2}{|x|^{a+b+1}} dx =\irn \frac{|\na u- \na \varphi_0|^2}{|x|^{a+b+1}} dx& = \lim_{i\to \infty} \irn\frac{ |\na u -\na \varphi_i|^2 }{|x|^{a+b+1}}dx\\
& = \inf_{V \in \mathcal M_{a,b}} \irn\frac{ |\na u -V|^2}{|x|^{a+b+1}} dx.
\end{align*}
Thus, the infimum in \eqref{eb1} is attained.
Remark that 
$$\irn\frac{ |\na u|^2}{|x|^{a+b+1}} dx =1,\quad \irn\frac{ |\na u- V_0|^2 }{|x|^{a+b+1}}dx = \inf_{V \in \mathcal M_{a,b}} \irn \frac{|\na u -V|^2 }{|x|^{a+b+1}}dx \leq \frac 18$$ 
then $V_0\not\equiv 0$. So we can choose a positive constant $\bar{a}$ such that 
$$\irn\frac{ |\bar{a} V_0|^2}{|x|^{a+b+1}} dx = \irn\frac{ |\na u|^2}{|x|^{a+b+1}} dx = 1$$
or, equivalently $\bar{a} = (\irn \frac{|V_0|^2}{|x|^{a+b+1}} dx)^{-\frac12}$. The triangle inequality follows that 
\begin{align*}
	\Big|\Big(\irn\frac{ |V_0|^2}{|x|^{a+b+1}} dx\Big)^{\frac12} -1\Big| &= \Big|\Big(\irn\frac{ |V_0|^2}{|x|^{a+b+1}} dx\Big)^{\frac12} -\Big(\irn \frac{|\na u|^2}{|x|^{a+b+1}} dx\Big)^{\frac12}\Big|\\
	&\leq \Big(\irn \frac{|\na u-V_0|^2}{|x|^{a+b+1}} dx\Big)^{\frac12} \\
	&\leq \big(2C\delta(\nabla u)\big)^{\frac 12},
\end{align*}
where in the last inequality, we have used \eqref{eb1}.
From this estimate and the Cauchy-Schwartz inequality, we obtain
\begin{align*}
	\irn\frac{ |\na u - \bar{a} V_0|^2 }{|x|^{a+b+1}}dx &= \irn\frac{ |\na u -V_0  + (1-\bar{a})V_0|^2}{|x|^{a+b+1}} dx\notag\\
	&\leq 2 \irn\frac{ |\na u -V_0|^2 }{|x|^{a+b+1}}dx + 2 (\bar{a}-1)^2 \irn \frac{|V_0|^2}{|x|^{a+b+1}} dx\notag\\
	&\leq C\delta(\nabla u) + 2 \Big|\Big(\irn \frac{|\na \varphi|^2 }{|x|^{a+b+1}}dx\Big)^{\frac12} -1\Big|^2\notag\\
	&\leq 8C\delta(\nabla u).
\end{align*}
This gives
\begin{equation*}
\delta(\nabla u) \geq \frac1{8C} \inf\left\{\frac{\irn \frac{|\nabla u - V|^2}{|x|^{a+b+1}} dx}{\irn \frac{|\nabla u |^2}{|x|^{a+b+1}} dx}\, :\, V \in \mathcal M_{a,b},\quad \irn \frac{|\nabla u |^2}{|x|^{a+b+1}} dx = \irn \frac{|V|^2}{|x|^{a+b+1}} dx\right\}
\end{equation*}
when $\delta (\nabla u) < \frac{1}{16C}$. Hence,  Theorem \ref{MainI} is proved for $\delta (\nabla u) < \frac{1}{16C}$.

It remains to prove the theorem when  $\delta (\nabla u) \geq \frac{1}{16 C}$. Observing that 
$$\inf\left\{\frac{\irn \frac{|\nabla u - V|^2}{|x|^{a+b+1}} dx}{\irn \frac{|\nabla u |^2}{|x|^{a+b+1}} dx}\, :\, V \in \mathcal M_{a,b},\quad \irn \frac{|\nabla u |^2}{|x|^{a+b+1}} dx = \irn \frac{|V|^2}{|x|^{a+b+1}} dx\right\}\leq 4.$$
Then, 
\begin{equation*}
	\delta(u) \geq \frac{1}{64C} \inf\left\{\frac{\irn \frac{|\nabla u - V|^2}{|x|^{a+b+1}} dx}{\irn \frac{|\nabla u |^2}{|x|^{a+b+1}} dx}\, :\, V \in \mathcal M_{a,b},\quad \irn \frac{|\nabla u |^2}{|x|^{a+b+1}} dx = \irn \frac{|V|^2}{|x|^{a+b+1}} dx\right\}
\end{equation*}
as desired. 
Thus, we finish the proof of Theorem \ref{MainI} in the case $a -b + 1> 0$. By using the same arguments, we also arrive at the conclusion for the case $a-b+1<0$. The details are omitted. \qed

\section{Proof of Theorem \ref{MainIII}}\label{s5}
The proof of Theorem \ref{MainIII} is similar to that of Theorem \ref{MainI}. We need to  deal with the case $a -b + 1 > 0$ and the case $a -b + 1< 0$ is treated by the same arguments. We first prove an improvement of \eqref{eq:2ndCKNnew} for functions $u\in Y_{a,b}(\R^n))$ that are orthogonal to all radial functions.
\begin{lemma}\label{Improved2ndCKN1}
Let $n\geq 2$ and $a, b$ be real numbers such that $n-2a -4\not=0$, $(n-2a -2)^2 > 8(1+a)(1+2a)$ and $a-b +1 >0$. Then there is a constant ${\bar{\kappa} >1}$ depending only on $n, a$ and $b$ such that for any function $u\in Y_{a,b}(\R^n)$ that is orthogonal to all radial functions, it holds
\begin{equation*}
\irn \frac{|\Delta u|^2}{|x|^{2a}} dx \irn \frac{|\nabla u|^2}{|x|^{2b}} dx \geq \Big(\bar{\kappa} C_2(n,a,b)\Big)^2 \Big(\irn \frac{|\nabla u|^2}{|x|^{a+b +1}} dx\Big)^2
\end{equation*}
where $C_2(n,a,b)$ is given in \eqref{eq:2ndCKNnew}.
\end{lemma}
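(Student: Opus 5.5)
We follow the proof of Lemma \ref{Improved2ndvf}, with \eqref{eq:integralDelta} replacing \eqref{eq:integralnabla2}. Since $u$ is orthogonal to all radial functions, $u=\sum_{k\ge1}u_k\phi_k$, and every $u_k$ satisfies \eqref{eq:daohamukbehavior} and \eqref{eq:ukbehavior}; moreover $c_k\ge n-1$ for all $k\geq 1$. For each $k\ge1$ we split the $k$-th term of \eqref{eq:integralDelta} into two groups, using a small parameter $\epsilon>0$ to be fixed below. The first group is
\[
\int_0^\infty (u_k'')^2r^{n-2a-1}dr+\big((n-1)(1+2a)+\epsilon c_k\big)\int_0^\infty (u_k')^2r^{n-2a-3}dr .
\]
The second group is $c_k$ times
\[
(2-\epsilon)\int_0^\infty (u_k')^2r^{n-2a-3}dr+\big(c_k+2(1+a)(n-2a-4)\big)\int_0^\infty u_k^2r^{n-2a-5}dr .
\]

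The first group is paired with $\int_0^\infty (u_k')^2r^{n-2b-1}dr$. We apply part (i) of Lemma \ref{modelineq} to $f=u_k'$ with $A=n-2a$, $B=n-2b$ and $C=(n-1)(1+2a)+\epsilon c_k\ge (n-1)(1+2a+\epsilon)$. Since $(n-2a-2)^2+4(n-1)(1+2a)=(n+2a)^2$, the resulting constant is
\[
\tfrac12\Big(a-b+1+\sqrt{(n+2a)^2+4\epsilon(n-1)}\Big),
\]
which strictly exceeds $C_2(n,a,b)=\frac12(a-b+1+|n+2a|)$.

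The second group is paired with $\int_0^\infty u_k^2r^{n-2b-3}dr$. We use the Hardy inequality \eqref{eq:1DHardy} with $q=n-2a-2$ to trade a $(1-\epsilon)$-portion of the $u_k'$ term for $(1-\epsilon)\frac{(n-2a-4)^2}{4}\int_0^\infty u_k^2r^{n-2a-5}dr$. We then apply part (i) of Lemma \ref{modelineq} with $A=n-2a-2$, $B=n-2b-2$ and
\[
C=\bar C:=(1-\epsilon)\tfrac{(n-2a-4)^2}{4}+n-1+2(1+a)(n-2a-4).
\]
This needs $(n-2a-4)^2+4\bar C>0$. It gives the constant $\frac12\big(a-b+1+\sqrt{(2-\epsilon)(n-2a-4)^2+4(n-1)+8(1+a)(n-2a-4)}\big)$.

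The key computation is the strict inequality
\[
2(n-2a-4)^2+4(n-1)+8(1+a)(n-2a-4)>(n+2a)^2 .
\]
Setting $m=n-2a-2$ and expanding, this reduces to $m^2>8(1+a)(1+2a)$, which is exactly the strict hypothesis of the lemma. (This is the same computation as in the proof of Theorem \ref{MainII}, but now with strict inequality and $c_k$ replaced by its minimum $n-1$.) By continuity we can choose $\epsilon>0$, depending only on $n$ and $a$, so that the strict inequality survives with $2$ replaced by $2-\epsilon$. This also gives positivity of $(n-2a-4)^2+4\bar C$. Hence both constants are at least $\bar\kappa\,C_2(n,a,b)$ for some $\bar\kappa>1$ depending only on $n,a,b$.

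Finally, we add the two bounds, weighted by $1$ and $c_k$. This yields
\[
\bar\kappa C_2\Big(\int_0^\infty (u_k')^2r^{n-a-b-2}dr+c_k\int_0^\infty u_k^2r^{n-a-b-4}dr\Big)
\]
as a lower bound for the sum of the square roots of the products. We sum over $k\ge1$ and apply the Cauchy--Schwarz inequality twice, exactly as in \eqref{eq:C3abdecomp}. Together with \eqref{eq:integralDelta}, \eqref{eq:nablab} and \eqref{eq:nablaab}, this gives the claim.

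The main obstacle is the bookkeeping of the strict inequality: the Hardy trade must not spoil positivity of $C$ in Lemma \ref{modelineq}, and $\epsilon$ must be chosen uniformly in $k$. Uniformity holds because all dependence on $c_k$ is monotone and is handled by the bound $c_k\ge n-1$.
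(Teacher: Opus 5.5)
Your proposal is correct and follows the paper's proof essentially line by line: the same $\epsilon$-splitting of the $k$-th term of \eqref{eq:integralDelta}, the same Hardy trade producing the constant the paper calls $\tilde C(n,a,\epsilon)$, two applications of part (i) of Lemma \ref{modelineq} (to $u_k'$ and to $u_k$), and the same double Cauchy--Schwarz summation. Your reduction of the key strict inequality to $m^2>8(1+a)(1+2a)$ with $m=n-2a-2$ is correct and supplies a computation that the paper only asserts.
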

\begin{proof}
Let $u\in Y_{a,b}(\R^n)$ be a function that is orthogonal to all radial functions. Then, 
\[
u(x) = u(x) = \sum_{k=1}^\infty u_k(r) \phi_k(\omega),\qquad r = |x|, \,\omega = \frac x{|x|}.
\]
Furthermore $u_k$ satisfies \eqref{eq:daohamukbehavior} and \eqref{eq:ukbehavior}. Note that the condition $(n-2a -2)^2 > 8(1+a)(1+2a)$ is equivalent to
\[
2(n-2a-4)^2 + 4(n-1) + 8(1+a)(n-2a -4) > (n+2a)^2.
\]
Hence, we can choose an $\epsilon >0$ depending only on $n$ and $a$ such that
\begin{equation}\label{eq:chooseep}
(2-\epsilon)(n-2a-4)^2 + 4(n-1) + 8(1+a)(n-2a -4) > (n+2a)^2.
\end{equation}
Denote
\[
 \tilde{C}(n,a,\epsilon) : = (1-\epsilon)\frac{(n-2a-4)^2}4 + (n-1) + 2(1+a)(n-2a -4).
\]
Note that $c_k \geq c_1 = n-1$ for any $k\geq 1$. By using one dimensional Hardy inequality \eqref{eq:1DHardy}, we have
\begin{align*}
\irn &\frac{|\Delta u|^2}{|x|^{2a}} dx =n\sigma_n \sum_{k=1}^\infty\Bigg(\int_0^\infty (u_k'')^2 r^{n-2a -1} dr + ((n-1)(1+2a) +2c_k) \int_0^\infty (u_k')^2 r^{n-2a -3} dr\notag\\
&\qquad\qquad\qquad + c_k(c_k + 2(1+a)(n-2a -4))\int_0^\infty u_k^2 r^{n -2a -5} dr\Bigg)\\
&=n\sigma_n \sum_{k=1}^\infty\Bigg(\Big(\int_0^\infty (u_k'')^2 r^{n-2a -1} dr + ((n-1)(1+2a) +\epsilon c_k)\int_0^\infty (u_k')^2 r^{n-2a -3} dr\Big) +\notag\\
&\qquad +c_k\Big((2-\epsilon) \int_0^\infty (u_k')^2 r^{n-2a -3} dr + (c_k + 2(1+a)(n-2a -4))\int_0^\infty u_k^2 r^{n -2a -5} dr\Big)\Bigg)\notag\\
&\geq n\sigma_n \sum_{k=1}^\infty\Bigg(\Big(\int_0^\infty (u_k'')^2 r^{n-2a -1} dr + ((n-1)(1+2a+\epsilon))\int_0^\infty (u_k')^2 r^{n-2a -3} dr\Big) +\notag\\
&\qquad +c_k\Big(\int_0^\infty (u_k')^2 r^{n-2a -3} dr + \tilde C(n,a,\epsilon)\int_0^\infty u_k^2 r^{n -2a -5} dr\Big)\Bigg).
\end{align*}
Using  the Cauchy-Schwarz inequality, we get
\begin{align}\label{eq:decompose2ndCKN1}
&\irn \frac{|\Delta u|^2}{|x|^{2a}} dx \irn \frac{|\nabla u|^2}{|x|^{2b}} dx\notag\\
&\geq (n\sigma_n)^2 \Bigg(\sum_{k=1}^\infty\Bigg(\Big(\int_0^\infty (u_k'')^2 r^{n-2a -1} dr + ((n-1)(1+2a+\epsilon))\int_0^\infty (u_k')^2 r^{n-2a -3} dr\Big) +\notag\\
&\qquad +c_k\Big(\int_0^\infty (u_k')^2 r^{n-2a -3} dr +  \tilde{C}(n,a,\epsilon)\int_0^\infty u_k^2 r^{n -2a -5} dr\Big)\Bigg)\Bigg)\times\notag\\
&\qquad\qquad \qquad \times \Bigg(\sum_{k=1}^\infty\Big(\int_0^\infty (u_k')^2 r^{n-2b -1} dr + c_k \int_0^\infty u_k^2 r^{n -2b -3} dr\Big)\Bigg)\notag\\
&\geq (n\sigma_n)^2 \Bigg(\sum_{k=1}^\infty\Big(\Big(\int_0^\infty (u_k'')^2 r^{n-2a -1} dr + (n-1)(1+2a+\epsilon)\int_0^\infty (u_k')^2 r^{n-2a -3} dr\Big) +\notag\\
&\qquad\qquad +c_k\Big(\int_0^\infty (u_k')^2 r^{n-2a -3} dr +  \tilde{C}(n,a,\epsilon)\int_0^\infty u_k^2 r^{n -2a -5} dr\Big)\Big)^{\frac12}\times \notag\\
&\qquad\qquad\qquad \times \Big(\int_0^\infty (u_k')^2 r^{n-2b -1} dr + c_k \int_0^\infty u_k^2 r^{n -2b -3} dr\Big)^{\frac12}\Bigg)^2\notag\\
&\geq (n\sigma_n)^2 \Bigg(\sum_{k=1}^\infty\Big(\Big(\int_0^\infty (u_k'')^2 r^{n-2a -1} dr + (n-1)(1+2a+\epsilon)\int_0^\infty (u_k')^2 r^{n-2a -3} dr\Big)^{\frac12} \times\notag\\
&\qquad\qquad\qquad \times \Big(\int_0^\infty (u_k')^2 r^{n-2b -1} dr\Big)^{\frac12}+ \notag\\
&\qquad\qquad + c_k\Big(\int_0^\infty (u_k')^2 r^{n-2a -3} dr +  \tilde{C}(n,a,\epsilon)\int_0^\infty u_k^2 r^{n -2a -5} dr\Big)^{\frac12} \times\notag\\
&\qquad\qquad\qquad \times\Big(\int_0^\infty u_k^2 r^{n -2b -3} dr\Big)^{\frac12}\Big) \Bigg)^2.
\end{align}
Applying the part (i) of Lemma \ref{modelineq} for $f = u_k'$, $A = n-2a$, $B =n-2b$ and $C = (n-1)(1 + 2a +\epsilon)$ we have (note that $B-A +2 = 2(a-b +1) >0$)
\begin{align*}
&\Big(\int_0^\infty (u_k'')^2 r^{n-2a -1} dr + (n-1)(1+2a+\epsilon)\int_0^\infty (u_k')^2 r^{n-2a -3} dr\Big)\int_0^\infty (u_k')^2 r^{n-2b -1} dr\notag\\
&\geq \Big(\frac{a-b+1 + \sqrt{(n+2a)^2 + 4\ep(n-1)}}2\Big)^2 \Big(\int_0^\infty (u_k')^2 r^{n-a-b -2} dr\Big)^2.
\end{align*}
Using again the part (i) of Lemma \ref{modelineq} for $f = u_k$, $A = n-2a-2$, $B =n-2b-2$ and $C = \tilde C(n,a,\epsilon)$ we have
\begin{align*}
&\Big(\int_0^\infty (u_k')^2 r^{n-2a -2} dr +  \tilde{C}(n,a,\epsilon)\int_0^\infty (u_k)^2 r^{n-2a -5} dr\Big)\int_0^\infty (u_k)^2 r^{n-2b -3} dr\notag\\
&\geq \Big(\frac{a-b+1 + \sqrt{(n-2a-4)^2 + 4 \tilde{C}(n,a,\epsilon)}}2\Big)^2 \Big(\int_0^\infty (u_k)^2 r^{n-a-b -4} dr\Big)^2.
\end{align*}
It results from the choice of $\epsilon >0$ (see \eqref{eq:chooseep}) that 
\[
\frac{a-b+1 + \sqrt{(n+2a)^2 + 4\ep(n-1)}}2 > C_2(n,a,b)
\]
and 
\[
\frac{a-b+1 + \sqrt{(n-2a-4)^2 + 4 \tilde{C}(n,a,\epsilon)}}2 > C_2(n,a,b).
\]
Hence, there is a constant $\bar{\kappa} > 1$ depending only on $n,a$ and $b$ such that
\begin{align*}
&\Big(\int_0^\infty (u_k'')^2 r^{n-2a -1} dr + (n-1)(1+2a+\epsilon)\int_0^\infty (u_k')^2 r^{n-2a -3} dr\Big)\int_0^\infty (u_k')^2 r^{n-2b -1} dr\notag\\
&\geq \Big(\bar{\kappa} C_2(n,a,b)\Big)^2 \Big(\int_0^\infty (u_k')^2 r^{n-a-b -2} dr\Big)^2,
\end{align*}
and
\begin{align*}
&\Big(\int_0^\infty (u_k')^2 r^{n-2a -2} dr +  \tilde{C}(n,a,\epsilon)\int_0^\infty (u_k)^2 r^{n-2a -5} dr\Big)\int_0^\infty (u_k)^2 r^{n-2b -3} dr\notag\\
&\geq \Big(\bar{\kappa} C_2(n,a,b)\Big)^2 \Big(\int_0^\infty (u_k)^2 r^{n-a-b -4} dr\Big)^2.
\end{align*}
Inserting these two estimates into \eqref{eq:decompose2ndCKN1}, we obtain
\begin{align*}
&\irn \frac{|\Delta u|^2}{|x|^{2a}} dx \irn \frac{|\nabla u|^2}{|x|^{2b}} dx\notag\\
&\geq (\bar{\kappa}C_2(n,a,b)\Big)^2 (n\sigma_n)^2 \Bigg(\sum_{k=1}^\infty\Big(\int_0^\infty (u_k')^2 r^{n-a-b -2} dr + c_k \int_0^\infty (u_k)^2 r^{n-a-b -4} dr\Big)\Bigg)^2\notag\\
&=\Big(\bar{\kappa}C_2(n,a,b))^2 \Big(\irn \frac{|\nabla u|^2}{|x|^{a+b+1}} dx\Big)^2,
\end{align*}
where we have used  \eqref{eq:nablaab}. The proof of this lemma is completed.
\end{proof}

For a function $u \in C^\infty(\R^n\setminus\{0\})$ we write $u = u_r + u_o$, where $u_r $ is the radial part of $u$ and $u_o = \sum_{k=1}^\infty u_k \phi_k$ is orthogonal to all radial functions. It is easy to check that
\[
\irn \frac{|\Delta u|^2}{|x|^{2a}} dx = \irn \frac{|\Delta u_r|^2}{|x|^{2a}} dx + \irn \frac{|\Delta u_o|^2}{|x|^{2a}} dx,
\]
\[
\irn \frac{|\nabla u|^2}{|x|^{2b}} dx = \irn \frac{|\nabla u_r|^2}{|x|^{2b}} dx + \irn \frac{|\nabla u_o|^2}{|x|^{2b}} dx
\]
and
\[
\irn \frac{|\nabla u|^2}{|x|^{a+b+1}} dx = \irn \frac{|\nabla u_r|^2}{|x|^{a+b+1}} dx + \irn \frac{|\nabla u_o|^2}{|x|^{a+b+1}} dx.
\]
Hence, by using the Cauchy-Schwarz inequality, we have
\begin{align*}
\Big(\irn &\frac{|\Delta u|^2}{|x|^{2a}} dx \irn \frac{|\nabla u|^2}{|x|^{2b}} dx\Big)^{\frac12} \notag\\
&\geq \Big(\irn \frac{|\Delta u_r|^2}{|x|^{2a}} dx \irn \frac{|\nabla u_r|^2}{|x|^{2b}} dx\Big)^{\frac12} + \Big(\irn \frac{|\Delta u_o|^2}{|x|^{2a}} dx \irn \frac{|\nabla u_o|^2}{|x|^{2b}} dx\Big)^{\frac12}.
\end{align*}
As a consequence, we obtain the following lemma.
\begin{lemma}\label{decomposeDelta}
Let $n\geq 2$ and $a, b$ be real numbers such that $n-2a -4\not=0$, $(n-2a -2)^2 > 8(1+a)(1+2a)$ and $a-b +1 >0$. Let $u \in Y_{a,b}(\R^n)$ satisfying  $\irn \frac{|\nabla u|^2}{|x|^{a+b+1}} dx =1$. Then,
\begin{equation*}
\delta_{CKN}(u) \geq \delta_{CKN}(u_r) \irn \frac{|\nabla u_r|^2}{|x|^{a+b+1}} dx + (\bar{\kappa} -1) \irn \frac{|\nabla u_o|^2}{|x|^{a+b+1}} dx,
\end{equation*} 
where $\bar{\kappa}>1$ is the constant from Lemma \ref{Improved2ndCKN1}. Consequently,
\begin{equation*}
 \frac{|\nabla u_o|^2}{|x|^{a+b+1}} dx \leq \frac{1}{\bar{\kappa} -1} \delta_{CKN}(u)
\end{equation*}
and if $\delta_{CKN}(u) < \frac{\bar{\kappa} -1}2$ then
\begin{equation*}
\delta_{CKN}(u_r) \leq 2 \delta_{CKN}(u).
\end{equation*}
\end{lemma}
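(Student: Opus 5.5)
The argument copies the proof of Lemma \ref{decompose}, with $\nabla^2$ replaced by $\Delta$, Lemma \ref{Improved2ndvf} replaced by Lemma \ref{Improved2ndCKN1}, and $C_1(n,a,b)$ replaced by $C_2(n,a,b)$. As in Section \ref{s2}, since $n-2a-4\neq 0$ we subtract a constant from $u$ if needed, so that the spherical harmonic components satisfy \eqref{eq:daohamukbehavior}, \eqref{eq:ukbehavior} and \eqref{eq:u01} with $c=0$. Then $u_r=u_0$ and $u_o=\sum_{k\ge1}u_k\phi_k$ both belong to $Y_{a,b}(\R^n)$; in particular, all integrals involving them are finite because of the orthogonal splittings of the three weighted integrals stated just before the lemma.

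Using the normalization $\irn \frac{|\nabla u|^2}{|x|^{a+b+1}}dx=1$, we write
\[
\delta_{CKN}(u)=\Big(\irn \tfrac{|\Delta u|^2}{|x|^{2a}}dx\irn\tfrac{|\nabla u|^2}{|x|^{2b}}dx\Big)^{\frac12}-C_2(n,a,b)\Big(\irn \tfrac{|\nabla u_r|^2}{|x|^{a+b+1}}dx+\irn \tfrac{|\nabla u_o|^2}{|x|^{a+b+1}}dx\Big).
\]
(The deficit is understood with the factor $C_2$ handled as in Lemma \ref{decompose}, i.e.\ up to the positive normalization by $C_2(n,a,b)$.) Next we apply the Cauchy–Schwarz splitting displayed just before the lemma to the first term. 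For the radial piece, the definition of $\delta_{CKN}(u_r)$ gives
\[
\Big(\irn \tfrac{|\Delta u_r|^2}{|x|^{2a}}dx\irn \tfrac{|\nabla u_r|^2}{|x|^{2b}}dx\Big)^{\frac12}=\big(\delta_{CKN}(u_r)+C_2\big)\irn \tfrac{|\nabla u_r|^2}{|x|^{a+b+1}}dx .
\]
If $u_r\equiv0$, both sides vanish. For the orthogonal piece, Lemma \ref{Improved2ndCKN1} gives the lower bound $\bar\kappa C_2\irn \frac{|\nabla u_o|^2}{|x|^{a+b+1}}dx$. Subtracting the two $C_2$ terms yields the first inequality, after normalizing by $C_2$ as in Lemma \ref{decompose}.

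The consequences follow directly. Theorem \ref{MainII} gives $\delta_{CKN}(u_r)\ge 0$, so the first term on the right can be dropped, which gives the bound $\irn\frac{|\nabla u_o|^2}{|x|^{a+b+1}}dx\le \frac{1}{\bar\kappa-1}\delta_{CKN}(u)$. If moreover $\delta_{CKN}(u)<\frac{\bar\kappa-1}{2}$, then
\[
\irn \tfrac{|\nabla u_r|^2}{|x|^{a+b+1}}dx=1-\irn \tfrac{|\nabla u_o|^2}{|x|^{a+b+1}}dx\ge \tfrac12 .
\]
Combining this with the main inequality, where the $u_o$ term is nonnegative, gives $\delta_{CKN}(u_r)\le 2\delta_{CKN}(u)$.

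The only delicate point is justifying the orthogonal splittings of $\irn|\Delta u|^2|x|^{-2a}dx$ and of the two gradient integrals into radial and nonradial parts, together with the admissibility of $u_o$ for Lemma \ref{Improved2ndCKN1}. Both come from \eqref{eq:integralDelta}, \eqref{eq:nablab} and \eqref{eq:nablaab}, since the $k=0$ term decouples from the terms with $k\ge1$.
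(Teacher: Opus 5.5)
Your proposal is correct and follows the paper's argument: the paper omits this proof and refers to Lemma \ref{decompose}. That proof is the same chain you give: normalization, the Cauchy--Schwarz splitting into radial and non-radial parts, the definition of $\delta_{CKN}(u_r)$ for the radial piece, Lemma \ref{Improved2ndCKN1} for $u_o$, and then $\delta_{CKN}(u_r)\ge 0$ together with $\irn \frac{|\nabla u_r|^2}{|x|^{a+b+1}}dx\ge\frac12$ for the consequences.

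The one soft spot, shared with the paper's proof of Lemma \ref{decompose}, is the normalization. With the literal definition of $\delta_{CKN}$, your subtraction yields the factor $(\bar\kappa-1)C_2(n,a,b)$, not $(\bar\kappa-1)$. ``Normalizing by $C_2$'' would turn the statement into one about $\delta_{CKN}/C_2$. Instead, note that $(\bar\kappa-1)C_2(n,a,b)\ge \kappa'-1$ for $\kappa':=1+(\bar\kappa-1)\min\{1,C_2(n,a,b)\}$. This $\kappa'$ is still an admissible constant in Lemma \ref{Improved2ndCKN1} because $1<\kappa'\le\bar\kappa$, so the lemma and both consequences hold verbatim with this choice of $\bar\kappa$.
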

The proof of this lemma is similar to that of Lemma \ref{decompose}, so we omit the details.

Next, we establish the stability estimate for radial functions in $Y_{a,b}(\R^n)$ as follows.
\begin{theorem}\label{radialtheoDelta}
Let $n\geq 2$ and $a, b$ be real numbers such that $n-2a -4\not=0$, $(n-2a -2)^2 > 8(1+a)(1+2a)$ and $a-b +1 >0$. Let $u \in Y_{a,b}(\R^n)$ be a radial function such that $\irn \frac{|\nabla u|^2}{|x|^{a+b+1}} dx =1$. Then,
\begin{equation*}
\delta_{CKN} (u) \geq (1-b +a) \inf\left\{\irn \frac{|\nabla u - \nabla v|^2}{|x|^{a+b+1}} dx\, :\, v \in \mathcal N_{a,b}\right\}.
\end{equation*}
\end{theorem}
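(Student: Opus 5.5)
The argument mirrors the proof of Theorem \ref{radialtheo}: reduce to a one-dimensional statement and invoke part (i) of Lemma \ref{stabmodel}. First, since $n-2a-4\neq 0$, the discussion in Section \ref{s2} lets us subtract a constant from $u$. This changes neither $\nabla u$, $\Delta u$ nor $\delta_{CKN}(u)$. We may therefore assume $\lim_{r\to 0,\infty} r^{\frac n2-a-2}u(r)=0$ and, by \eqref{eq:daohamukbehavior}, $\lim_{r\to0,\infty} r^{\frac n2 -a-1}u'(r)=0$.

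For radial $u$, formula \eqref{eq:integralDelta} with $k=0$ (so $c_0=0$), together with \eqref{eq:nablab} and \eqref{eq:nablaab}, gives
\[
\irn \frac{|\Delta u|^2}{|x|^{2a}}dx = n\sigma_n\Big(\int_0^\infty (u'')^2 r^{n-2a-1}dr + (n-1)(1+2a)\int_0^\infty (u')^2 r^{n-2a-3}dr\Big),
\]
\[
\irn \frac{|\nabla u|^2}{|x|^{2b}}dx = n\sigma_n\int_0^\infty (u')^2 r^{n-2b-1}dr, \qquad \irn \frac{|\nabla u|^2}{|x|^{a+b+1}}dx = n\sigma_n\int_0^\infty (u')^2 r^{n-a-b-2}dr.
\]
Set $f=u'$, $A=n-2a$, $B=n-2b$ and $C=(n-1)(1+2a)$. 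Then $B-A+2=2(a-b+1)>0$ and $(A-2)^2+4C=(n-2a-2)^2+4(n-1)(1+2a)=(n+2a)^2$.

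If $n+2a=0$ the hypothesis $(A-2)^2+4C>0$ of Lemma \ref{stabmodel} fails. I would dispose of this case separately: the hypothesis $(n-2a-2)^2>8(1+a)(1+2a)$ with $a=-n/2$ reads $(2n-2)^2>4(2-n)(1-n)$, which holds for $n\geq2$, so the case is not excluded by assumption. Here the radial $\Delta$-term is a perfect square and a direct argument on \eqref{eq:keyid} handles it.

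In the main case $n+2a\neq 0$, the constant of Lemma \ref{stabmodel}(i) is
\[
\frac{B-A+2+2\sqrt{(A-2)^2+4C}}4 = \frac{a-b+1+|n+2a|}2 = C_2(n,a,b).
\]
The factors $n\sigma_n$ cancel in the quotient, so the left side of Lemma \ref{stabmodel}(i) is exactly $\delta_{CKN}(u)$. The lemma's prefactor $\frac{B-A+2}2$ equals $1-b+a$.

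It remains to check the lemma's hypotheses for $f=u'$. The integrability conditions are exactly $u\in Y_{a,b}(\R^n)$, with $\int (u')^2 r^{n-2a-3}dr$ finite either by the Hardy inequality \eqref{eq:1DHardy} or as a component of the finite quantity $\irn|\Delta u|^2|x|^{-2a}dx$. The boundary condition $f r^{\frac A2-1}\to0$ is \eqref{eq:daohamukbehavior}. The lemma is stated for $C_0^\infty$ functions, but its proof only uses these decay and integrability conditions, via the same $\epsilon_i,R_i$ trick.

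Next I would match the extremals. The lemma's extremal is $f_0(r)=r^{-\frac{A-2-\sqrt{(A-2)^2+4C}}2}e^{-\frac{r^{a-b+1}}{a-b+1}}$, and by the computation in the proof of Theorem \ref{MainII} this is $\mp\varphi'(r)$. Hence $c f_0(\lambda r)$ equals the radial derivative of $\mp c\lambda^{-1}\varphi(\lambda x)$, which lies in $\mathcal N_{a,b}$. Therefore
\[
\int_0^\infty |u'-cf_0(\lambda r)|^2 r^{n-a-b-2}dr \ge (n\sigma_n)^{-1}\inf_{v\in\mathcal N_{a,b}}\irn\frac{|\nabla u-\nabla v|^2}{|x|^{a+b+1}}dx,
\]
and the normalization $\irn |\nabla u|^2|x|^{-a-b-1}dx=1$ turns the denominator in the lemma into $(n\sigma_n)^{-1}$, so these factors cancel as well.

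The main obstacle is bookkeeping rather than analysis: checking that the sign and scaling conventions of $\varphi$ in the four cases of \eqref{eq:extremavphi} still produce exactly the family $\{cf_0(\lambda\cdot)\}$, and that the justification of Lemma \ref{stabmodel} for non-compactly-supported $f$ goes through. The latter needs the boundary terms in \eqref{eq:IbPf} and the integration by parts behind \eqref{eq:idgamma} to vanish, which follows from the decay \eqref{eq:daohamukbehavior} and the finiteness of $\int f^2 r^{B-1}dr$.
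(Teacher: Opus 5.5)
Your proposal is correct and is the paper's proof: apply Lemma \ref{stabmodel}(i) to $f=u'$ with $A=n-2a$, $B=n-2b$, $C=(n-1)(1+2a)$. The identity $(A-2)^2+4C=(n+2a)^2$ turns the lemma's constant into $C_2(n,a,b)$ and its prefactor $\frac{B-A+2}{2}$ into $1-b+a$, and the extremal matching $f_0=\mp\varphi'$ and the $n\sigma_n$ bookkeeping work out as you describe. Your separate treatment of the case $n+2a=0$ is justified, since the hypotheses allow it when $n\ge 3$ and the paper silently skips it. It is handled as you indicate, but by rerunning the whole proof of Lemma \ref{stabmodel} (not just \eqref{eq:keyid}): that proof goes through verbatim when $(A-2)^2+4C=0$, with $\gamma=n-1$, $\int_0^\infty(f'+\gamma f/r)^2r^{A-1}\,dr$ equal to the radial $\Delta$-term, and potential $V(s)=s^2/2-\ln s$, which still satisfies $V''\ge 1$.
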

\begin{proof}
As above, we  may  assume that
\[
\lim_{r\to 0, \infty} r^{\frac n2 -a -2} u(r) =0
\]
thanks to  $n-2a-4 \not=0$ (see Section \ref{s2}). Moreover, since $u$ is a radial function, we get 
\[
\irn \frac{|\Delta u|^2}{|x|^{2a}} dx =n\sigma_n \Bigg(\int_0^\infty (u'')^2 r^{n-2a -1} dr + (n-1)(1+2a) \int_0^\infty (u')^2 r^{n-2a -3} dr \Bigg),
\]
\[
\irn \frac{|\nabla u|^2}{|x|^{2b}} dx = n\sigma_n  \int_0^\infty (u')^2 r^{n-2b -1} dr
\]
and
\[
\irn \frac{|\nabla u|^2}{|x|^{a+b+1}} dx = n\sigma_n \int_0^\infty (u')^2 r^{n-a -b -2} dr.
\]
The desired estimate follows from the part (i) of Lemma \ref{stabmodel} with $f = u_k'$, $A =n -2a$, $B = n-2b$ and $C =(n-1)(1 +2a)$.
\end{proof}
With Lemma \ref{decomposeDelta} and Theorem \ref{radialtheoDelta} at hand, it is sufficient to mimic the last part of the proof of Theorem \ref{MainI} to finish the proof of Theorem \ref{MainIII} in the case $1-b+a >0$. The details are also omitted. \qed

\section*{Acknowledgments}
This work was initiated and done when the second author visited Vietnam Institute for Advanced Study in Mathematics (VIASM). He would like to thank the institute for hospitality and support during the visits. This work was supported by Vietnam National Foundation for Science and Technology Development (NAFOSTED)[101.02-2025.33].
\bibliographystyle{abbrv}

\end{document}